\DeclareMathAlphabet{\mathpzc}{OT1}{pzc}{m}{it}
\newenvironment{tfae}{
\begin{enumerate}}{\end{enumerate}}
\newtheorem{prop}{Proposition}[section]
\newtheorem{lemma}[prop]{Lemma}
\newtheorem{theorem}[prop]{Theorem}
\newtheorem{corollary}[prop]{Corollary}
\theoremstyle{definition}
\newtheorem{definition}[prop]{Definition}
\newtheorem{definitions}[prop]{Definitions}
\newtheorem{example}[prop]{Example}
\newtheorem{examples}[prop]{Examples}
\newtheorem{remark}[prop]{Remark}
\newtheorem{remarks}[prop]{Remarks}
\renewcommand{\subsection}[1]{\addtocounter{subsection}{1}
\vspace*{2ex}\noindent\textbf{\thesubsection}\hspace{1ex}{\bf #1}}
\newcommand{\fspstr}[2]{\llbracket #1,#2\rrbracket}
\def\mathrmdef#1{\expandafter\def\csname#1\endcsname{{\rm#1}}}
\def\mathsfdef#1{\expandafter\def\csname#1\endcsname{{\sf#1}}}
 \def\mathbfdef#1{\expandafter\def\csname#1\endcsname{{\rm\bf#1}}}
\newcommand{\Cats}[1]{#1\text{-}\Cat}
\newcommand{\Dists}[1]{#1\text{-}\Dist}
\newcommand{\Rels}[1]{#1\text{-}\Rel}
\newcommand{\VCat}{\Cats{V}}
\newcommand{\VCats}{\Cats{V}_\sep}
\newcommand{\VDist}{\Dists{V}}
\newcommand{\VRel}{\Rels{V}}
\def\LL{\mathbb{L}}
\def\NN{\mathbb{N}}
\def\PP{\mathbb{P}}
\def\TT{\mathbb{T}}
\def\BB{\mathbb{B}}       
\def\BBb{\mathbb{B}_\bullet} 
\def\Bb{B_\bullet}           
\def\fx{\mathfrak{x}}
\def\fX{\mathfrak{X}}
\def\relto{{\longrightarrow\hspace*{-2.8ex}{\mapstochar}\hspace*{2.6ex}}}
\def\dist{{\longrightarrow\hspace*{-3.1ex}{\circ}\hspace*{1.5ex}}}
\def\yoneda{\mathpzc{y}}
\def\mult{\mathpzc{m}}
\def\ta{\widetilde{a}}
\def\ha{\widehat{a}}
\def\hha{\widehat{\widehat{a}}}
\def\tb{\widetilde{b}}
\def\tc{\widetilde{c}}
\def\two{\mbox{\bf 2}}
\def\yk{\overline{y}}
 \newcommand{\hm}[2]{ \operatorname{hom}(#1,#2)}
 \newcommand\adjunct[2]{\xymatrix@=8ex{\ar@{}[r]|{\top}\ar@<1mm>@/^2mm/[r]^{{#2}} & \ar@<1mm>@/^2mm/[l]^{{#1}}}}
\begin{document}  
\title{On presheaf submonads of quantale-enriched categories}  
\author{Maria Manuel Clementino}

\author{Carlos Fitas}
\address{University of Coimbra, CMUC, Department of Mathematics, 3001-501 Coimbra, Portugal}
\email{mmc@mat.uc.pt, cmafitas@gmail.com}
\thanks{}

\begin{abstract}
This paper focus on the presheaf monad and its submonads on the realm of $V$-categories, for a quantale $V$. First we present two characterisations of presheaf submonads, both using $V$-distributors: one based on admissible classes of $V$-distributors, and other using Beck-Chevalley conditions on $V$-distributors. Then we focus on the study of the corresponding Eilenberg-Moore categories of algebras, having as main examples the formal ball monad and the so-called Lawvere monad.

\end{abstract}
\subjclass[2020]{18D20, 18C15, 18D60, 18A22, 18B35, 18F75}
\keywords{Quantale, $V$-category, distributor, lax idempotent monad, Presheaf monad, Ball monad, Lawvere monad}
\maketitle  

\section*{Introduction}
Having as guideline Lawvere's point of view that it is worth to regard metric spaces as categories enriched in the extended real half-line $[0,\infty]_+$ (see \cite{Law73}), we regard both the formal ball monad and the monad that identifies Cauchy complete spaces as its algebras -- which we call here the \emph{Lawvere monad} -- as submonads of the presheaf monad on the category $\Met$ of $[0,\infty]_+$-enriched categories. This leads us to the study of general presheaf submonads on the category of $V$-enriched categories, for a given quantale $V$.

Here we expand on known general characterisations of presheaf submonads and their algebras, and introduce a new ingredient -- conditions of Beck-Chevalley type -- which allows us to identify properties of functors and natural transformations, and, most importantly, contribute to a new facet of the behaviour of presheaf submonads.

In order to do that, after introducing the basic concepts needed to the study of $V$-categories in Section 1, Section 2 presents the presheaf monad and a characterisation of its submonads using admissible classes of $V$-distributors which is based on \cite{CH08}. Next we introduce the already mentioned Beck-Chevalley conditions (BC*) which resemble those discussed in \cite{CHJ14}, with $V$-distributors playing the role of $V$-relations. In particular we show that lax idempotency of a monad $\TT$ on $\VCat$ can be identified via a BC* condition, and that the presheaf monad satisfies fully BC*.
This leads to the use of BC* to present a new characterisation of presheaf submonads in Section 4.

The remaining sections are devoted to the study of the Eilenberg-Moore category induced by presheaf submonads. In Section 5, based on \cite{CH08}, we detail the relationship between the algebras, (weighted) cocompleteness, and injectivity. Next we focus on the algebras and their morphisms, first for the formal ball monad, and later for a general presheaf submonad. We end by presenting the relevant example of the presheaf submonad whose algebras are the so-called Lawvere complete $V$-categories \cite{CH09}, which, when $V=[0,\infty]_+$, are exactly the Cauchy complete (generalised) metric spaces, while their morphisms are the $V$-functors which preserve the limits for Cauchy sequences.

\section{Preliminaries}

Our work focus on $V$-categories (or $V$-enriched categories, cf. \cite{EK66, Law73, Kel82}) in the special case of $V$ being a quantale.

Throughout $V$ is a \emph{commutative and unital quantale}; that is, $V$ is a complete lattice endowed with a symmetric tensor product $\otimes$, with unit $k\neq\bot$, commuting with joins, so that it has a right adjoint $\hom$; this means that, for $u,v,w\in V$,
\[u\otimes v\leq w\;\Leftrightarrow\; v\leq\hom(u,w).\]
As a category, $V$ is a complete and cocomplete (thin) symmetric monoidal closed category.

\begin{definition}
A \emph{$V$-category} is a pair $(X,a)$ where $X$ is a set and $a\colon X\times X\to V$ is a map such that:
\begin{itemize}
\item[(R)] for each $x\in X$, $k\leq a(x,x)$;
\item[(T)] for each $x,x',x''\in X$, $a(x,x')\otimes a(x',x'')\leq a(x,x'')$.
\end{itemize}
If $(X,a)$, $(Y,b)$ are $V$-categories, a \emph{$V$-functor} $f\colon (X,a)\to(Y,b)$ is a map $f\colon X\to Y$ such that
\begin{itemize}
\item[(C)] for each $x,x'\in X$, $a(x,x')\leq b(f(x),f(x'))$.
\end{itemize}
The category of $V$-categories and $V$-functors will be denoted by $\VCat$. Sometimes we will use the notation $X(x,y)=a(x,y)$ for a $V$-category $(X,a)$ and $x,y\in X$.
\end{definition}

We point out that $V$ has itself a $V$-categorical structure, given by the right adjoint to $\otimes$, $\hom$; indeed, $u\otimes k\leq u\;\Rightarrow\;k\leq\hom(u,u)$, and $u\otimes\hom(u,u')\otimes\hom(u',u'')\leq u'\otimes\hom(u',u'')\leq u''$ gives that $\hom(u,u')\otimes\hom(u',u'')\leq\hom(u,u'')$. Moreover, for every $V$-category $(X,a)$, one can define its \emph{opposite $V$-category} $(X,a)^\op=(X,a^\circ)$, with $a^\circ(x,x')=a(x',x)$ for all $x,x'\in X$.

\begin{examples}\label{ex:VCat}
\begin{enumerate}
\item For $V=\two=(\{0<1\},\wedge,1)$, a $\two$-category is an
    \emph{ordered set} (not necessarily antisymmetric) and a
    $\two$-functor is a \emph{monotone map}. We denote $\Cats{\two}$ by $\Ord$.
  \item The lattice $V=[0,\infty]$ ordered by the ``greater
    or equal'' relation $\geq$ (so that $r\wedge s=\max\{r,s\}$, and the supremum of $S\subseteq[0,\infty]$ is given
    by $\inf S$) with tensor $\otimes=+$ will be denoted by $[0,\infty]_+$. A $[0,\infty]_+$-category is a
    \emph{(generalised) metric space} and a
    $[0,\infty]_+$-functor is a \emph{non-expansive map}  (see \cite{Law73}). We denote $\Cats{[0,\infty]_+}$ by $\Met$. We note that
    \[
      \hom(u,v)=v\ominus u:=\max\{v-u,0\},
    \]
    for all $u,v\in[0,\infty]$.

    If instead of $+$ one considers the tensor product $\wedge$, then $\Cats{[0,\infty]_\wedge}$ is the category $\UMet$ of \emph{ultrametric spaces} and \emph{non-expansive maps}.
  \item\label{ex.zero_um} The complete lattice $[0,1]$ with the usual
    ``less or equal'' relation $\le$ is isomorphic to $[0,\infty]$ via
    the map $[0,1]\to[0,\infty],\,u\mapsto -\ln(u)$ where
    $-\ln(0)=\infty$. Under this isomorphism, the operation $+$ on
    $[0,\infty]$ corresponds to the multiplication $*$ on $[0,1]$. Denoting this quantale by $[0,1]_*$, one has $\Cats{[0,1]_*}$
    isomorphic to the category $\Met=\Cats{[0,\infty]_+}$ of (generalised) metric spaces and
    non-expansive maps.

    Since $[0,1]$ is a frame, so that finite meets commute with infinite joins, we can also consider it as a quantale
    with $\otimes=\wedge$. The category $\Cats{[0,1]_{\wedge}}$ is isomorphic
    to the category $\UMet$.

    Another interesting tensor product in $[0,1]$ is given by the
    \emph{\L{}ukasiewicz tensor} $\odot$ where $u\odot v=\max(0,u+v-1)$; here
    $\hom(u,v)=\min(1,1-u+v)$. Then $\Cats{[0,1]_\odot}$ is the category of \emph{bounded-by-1 (generalised) metric spaces} and \emph{non-expansive maps}.

  \item We consider now the set
    \[
      \Delta=\{\varphi\colon[0,\infty]\to [0,1]\mid \text{for all
        $\alpha\in[0,\infty]$: }\varphi(\alpha)=\bigvee_{\beta<\alpha}
      \varphi(\beta) \},
    \]
    of \emph{distribution functions}. With the
    pointwise order, it is a complete lattice. For $\varphi,\psi\in\Delta$ and
    $\alpha\in[0,\infty]$, define $\varphi\otimes\psi\in\Delta$ by
    \[
      (\varphi\otimes \psi)(\alpha)=\bigvee_{\beta+\gamma\le
        \alpha}\varphi(\beta)*\psi(\gamma).
    \]
    Then
    $\otimes:\Delta\times\Delta\to\Delta$ is associative and
    commutative, and
    \[
      \kappa:[0,\infty]\to [0,1],\, \alpha\mapsto
      \begin{cases}
        0 & \text{if }\alpha=0,\\
        1 & \text{else}
      \end{cases}
    \]
    is a unit for $\otimes$. Finally,
    $\psi\otimes-:\Delta\to\Delta$ preserves suprema since, for all $u\in [0,1]$,
    $u*-\colon[0,1]\to[0,1]$ preserves suprema. A $\Delta$-category is a
    \emph{(generalised) probabilistic metric space} and a
    $\Delta$-functor is a \emph{probabilistic non-expansive
      map} (see \cite{HR13} and references there).
  \end{enumerate}
\end{examples}

We will also make use of two additional categories we describe next, the category $\VRel$, of sets and $V$-relations, and the category $\VDist$, of $V$-categories and $V$-distributors.\\

Objects of $\VRel$ are sets, while morphisms are $V$-relations, i.e., if $X$ and $Y$ are sets, a \emph{$V$-relation} $r\colon X\relto Y$ is a map $r\colon X\times Y\to V$. Composition of $V$-relations is given by \emph{relational composition}, so that the composite of $r\colon X\relto Y$ and $s\colon Y\relto Z$ is given by
\[(s\cdot r)(x,z)=\bigvee_{y\in Y}r(x,y)\otimes s(y,z),\]
for every $x\in X$, $z\in Z$.
Identities in $\VCat$ are simply identity relations, with $1_X(x,x')=k$ if $x=x'$ and $1_X(x,x')=\bot$ otherwise.
The category $\VRel$ has an involution $(\;)^\circ$, assigning to each $V$-relation $r\colon X\relto Y$ the $V$-relation $r^\circ\colon Y\relto X$ defined by $r^\circ(y,x)=r(x,y)$, for every $x\in X$, $y\in Y$.

Since every map $f\colon X\to Y$ can be thought as a $V$-relation through its graph $f_\circ\colon X\times Y\to V$, with $f_\circ(x,y)=k$ if $f(x)=y$ and $f_\circ(x,y)=\bot$ otherwise, there is an injective on objects and faithful functor $\Set\to\VRel$. When no confusion may arise, we use also $f$ to denote the $V$-relation $f_\circ$.

The category $\VRel$ is a 2-category, when equipped with the 2-cells given by the pointwise order; that is, for $r,r'\colon X\relto Y$, one defines $r\leq r'$ if, for all $x\in X$, $y\in Y$, $r(x,y)\leq r'(x,y)$. This gives us the possibility of studying adjointness between $V$-relations. We note in particular that, if $f\colon X\to Y$ is a map, then $f_\circ\cdot f^\circ\leq 1_Y$ and $1_X\leq f^\circ\cdot f_\circ$, so that $f_\circ\dashv f^\circ$.\\

Objects of $\VDist$ are $V$-categories, while morphisms are $V$-distributors (also called $V$-bimodules, or $V$-profunctors); i.e., if $(X,a)$ and $(Y,b)$ are $V$-categories, a \emph{$V$-distributor} -- or, simply, a \emph{distributor} -- $\varphi\colon (X,a)\dist (Y,b)$ is a $V$-relation $\varphi\colon X\relto Y$ such that $\varphi\cdot a\leq\varphi$ and $b\cdot\varphi\leq\varphi$ (in fact $\varphi\cdot a=\varphi$ and $b\cdot\varphi=\varphi$ since the other inequalities follow from (R)). Composition of distributors is again given by relational composition, while the identities are given by the $V$-categorical structures, i.e. $1_{(X,a)}=a$. Moreover, $\VDist$ inherits the 2-categorical structure from $\VRel$.\\

Each $V$-functor $f\colon(X,a)\to(Y,b)$ induces two distributors, $f_*\colon(X,a)\dist(Y,b)$ and \linebreak $f^*\colon(Y,b)\dist(X,a)$, defined by $f_*(x,y)=Y(f(x),y)$ and $f^*(y,x)=Y(y,f(x))$, that is, $f_*=b\cdot f_\circ$ and $f^*=f^\circ\cdot b$. These assignments are functorial, as we explain below.\\

First we define 2-cells in $\VCat$: for $f,f'\colon(X,a)\to(Y,b)$ $V$-functors, $f\leq f'$ when $f^*\leq (f')^*$ as distributors, so that
\[f\leq f' \;\;\Leftrightarrow\;\; \forall x\in X, \,y\in Y, \; Y(y,f(x))\leq Y(y,f'(x)).\] $\VCat$ is then a 2-category, and we can define two 2-functors
\[\begin{array}{rclcrcl}
(\;)_*\colon \VCat^\co&\longrightarrow&\VDist&\mbox{ and }&(\;)^*\colon\VCat^\op&\longrightarrow&\VDist\\
X&\longmapsto&X&&X&\longmapsto&X\\
f&\longmapsto&f_*&&f&\longmapsto&f^*
\end{array}\]
Note that, for any $V$-functor $f\colon(X,a)\to(Y,b)$,
\[f_*\cdot f^*=b\cdot f_\circ\cdot f^\circ\cdot b\leq b\cdot b=b\mbox{ and }f^*\cdot f_*=f^\circ\cdot b\cdot b\cdot f_\circ\geq f^\circ\cdot f_\circ\cdot a\geq a;\]
hence every $V$-functor induces a pair of adjoint distributors, $f_*\dashv f^*$. A $V$-functor $f\colon X\to Y$ is said to be \emph{fully faithful} if $f^*\cdot f_*=a$, i.e. $X(x,x')=Y(f(x),f(x'))$ for all $x,x'\in X$, while it is \emph{fully dense} if $f_*\cdot f^*=b$, i.e. $Y(y,y')=\bigvee_{x\in X}Y(y,f(x))\otimes Y(f(x),y')$, for all $y,y'\in Y$. A fully faithful $V$-functor $f\colon X\to Y$ does not need to be an injective map; it is so in case $X$ and $Y$ are separated $V$-categories (as defined below).\\

\begin{remark}\label{rem:adjcond}
In $\VCat$ adjointness between $V$-functors
\[Y\adjunct{f}{g}X\]
 can be equivalently expressed as:
\[f\dashv g\;\Leftrightarrow\;f_*=g^*\;\Leftrightarrow\; g^*\dashv f^* \;\Leftrightarrow\;(\forall x\in X)\;(\forall y\in Y)\;X(x,g(y))=Y(f(x),y).\]
In fact the latter condition encodes also $V$-functoriality of $f$ and $g$; that is, if $f\colon X\to Y$ and $g\colon Y\to X$ are maps satisfying the condition
\[(\forall x\in X)\;(\forall y\in Y)\;\;  X(x,g(y))=Y(f(x),y),\]
then $f$ and $g$ are $V$-functors, with $f\dashv g$.

Furthermore, it is easy to check that, given $V$-categories $X$ and $Y$, a map $f\colon X\to Y$ is a $V$-functor whenever $f_*$ is a distributor (or whenever $f^*$ is a distributor).
\end{remark}

The order defined on $\VCat$ is in general not antisymmetric. For $V$-functors $f,g\colon X\to Y$, one says that $f\simeq g$ if $f\leq g$ and $g\leq f$ (or, equivalently, $f^*=g^*$). For elements $x,y$ of a $V$-category $X$, one says that $x\leq y$ if, considering the $V$-functors $x,y\colon E=(\{*\},k)\to X$ (where $k(*,*)=k$) defined by $x(*)=x$ and $y(*)=y$, one has $x\leq y$; or, equivalently, $X(x,y)\geq k$. Then, for any $V$-functors $f,g\colon X\to Y$, $f\leq g$ if, and only if, $f(x)\leq g(x)$ for every $x\in X$.

\begin{definition}
A $V$-category $Y$ is said to be \emph{separated} if, for $f,g\colon X\to Y$, $f=g$ whenever $f\simeq g$; equivalently, if, for all $x,y\in Y$, $x\simeq y$ implies $x=y$.
\end{definition}

The tensor product $\otimes$ on $V$ induces a tensor product on $\VCat$, with $(X,a)\otimes(Y,b)=(X\times Y,a\otimes b)=X\otimes Y$, where $(X\otimes Y)((x,y),(x',y'))=X(x,x')\otimes Y(y,y')$. The $V$-category $E$ is a $\otimes$-neutral element. With this tensor product, $\VCat$ becomes a monoidal closed category. Indeed, for each $V$-category $X$, the functor $X\otimes (\;)\colon \VCat\to\VCat$ has a right adjoint $(\;)^X$ defined by $Y^X=(\VCat(X,Y), \fspstr{\;}{\;} )$, with $\fspstr{f}{g}=\bigwedge_{x\in X}Y(f(x),g(x))$ (see \cite{EK66, Law73, Kel82} for details).

It is interesting to note the following well-known result (see, for instance, \cite[Theorem 2.5]{CH09}).

\begin{theorem}\label{th:fct_dist}
For $V$-categories $(X,a)$ and $(Y,b)$, and a $V$-relation $\varphi\colon X\relto Y$, the following conditions are equivalent:
\begin{tfae}
\item $\varphi\colon(X,a)\dist(Y,b)$ is a distributor;
\item $\varphi\colon(X,a)^\op\otimes(Y,b)\to(V,\hom)$ is a $V$-functor.
\end{tfae}
\end{theorem}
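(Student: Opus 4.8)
The plan is to unwind both conditions into pointwise inequalities in $V$ and check they coincide, using the adjunction $u\otimes v\le w \Leftrightarrow v\le\hom(u,w)$ repeatedly. Recall that the $V$-category structure on $(X,a)^\op\otimes(Y,b)$ sends the pair $((x,y),(x',y'))$ to $a^\circ(x,x')\otimes b(y,y')=a(x',x)\otimes b(y,y')$, and that $(V,\hom)$ has structure $\hom(u,v)=\hom$ evaluated at $(u,v)$.

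First I would write out (ii): the map $\varphi\colon X\times Y\to V$ is a $V$-functor $(X,a)^\op\otimes(Y,b)\to(V,\hom)$ precisely when, for all $x,x'\in X$ and $y,y'\in Y$,
\[
a(x',x)\otimes b(y,y')\ \le\ \hom\bigl(\varphi(x,y),\varphi(x',y')\bigr).
\]
By the defining adjunction of $\hom$, this is equivalent to
\[
\varphi(x,y)\otimes a(x',x)\otimes b(y,y')\ \le\ \varphi(x',y')
\]
for all such $x,x',y,y'$. Next I would write out (i): $\varphi$ is a distributor iff $\varphi\cdot a\le\varphi$ and $b\cdot\varphi\le\varphi$, i.e. $\bigvee_{x}\varphi(x',y)\otimes a(x',x)\le\varphi(x,y)$ wait — I must be careful with the variances of the two whiskerings; concretely $(\varphi\cdot a)(x,y)=\bigvee_{x''}a(x,x'')\otimes\varphi(x'',y)$ and $(b\cdot\varphi)(x,y)=\bigvee_{y''}\varphi(x,y'')\otimes b(y'',y)$, and the distributor conditions say each of these is $\le\varphi(x,y)$. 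Since joins on the left of $\le$ can be taken termwise, $\varphi\cdot a\le\varphi$ is equivalent to $a(x,x'')\otimes\varphi(x'',y)\le\varphi(x,y)$ for all $x,x'',y$, and similarly $b\cdot\varphi\le\varphi$ is equivalent to $\varphi(x,y'')\otimes b(y'',y)\le\varphi(x,y)$ for all $x,y'',y$.

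The remaining step is to see that the single inequality from (ii) is equivalent to the conjunction of the two from (i). One direction is immediate: setting $b(y,y')=k$ (taking $y'=y$, using (R)) in the (ii)-inequality and using $k\otimes-\ge$ the relevant term gives $\varphi(x,y)\otimes a(x',x)\le\varphi(x',y')$ with $y'=y$, hence the first distributor condition after relabelling; symmetrically setting $a(x',x)=k$ recovers the second. Conversely, given both distributor conditions, one chains them: $\varphi(x,y)\otimes a(x',x)\otimes b(y,y')\le\varphi(x',y)\otimes b(y,y')\le\varphi(x',y')$, using associativity and commutativity of $\otimes$ to bracket appropriately. This yields the (ii)-inequality, completing the equivalence.

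I do not anticipate a genuine obstacle here; the only point requiring care is bookkeeping of the two variances — the opposite on the $X$-variable means the ``$a$'' appears as $a(x',x)=a^\circ(x,x')$ — and the legitimacy of passing between a join-indexed inequality and its termwise form, which is valid because $-\le w$ is preserved and reflected by suprema in the complete lattice $V$. Everything else is a routine application of the tensor–hom adjunction and the unit/associativity axioms of the quantale, together with reflexivity (R) to insert and remove copies of $k$.
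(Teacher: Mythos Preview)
Your argument is correct. You have unwound both sides to the single pointwise inequality
\[
\varphi(x,y)\otimes a(x',x)\otimes b(y,y')\ \le\ \varphi(x',y'),
\]
obtained the two distributor inequalities from it by specialising $y'=y$ (resp.\ $x'=x$) and using reflexivity $b(y,y)\ge k$ (resp.\ $a(x,x)\ge k$), and conversely chained the two distributor inequalities using commutativity and associativity of $\otimes$. The only cosmetic point is the phrasing ``setting $b(y,y')=k$'': strictly one sets $y'=y$ and uses $b(y,y)\ge k$ together with monotonicity of $\otimes$, which you do indicate parenthetically.

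As for comparison: the paper does not actually supply a proof of this theorem. It records it as a well-known result and refers the reader to \cite[Theorem~2.5]{CH09}. Your write-up is therefore a self-contained proof where the paper gives none; the standard argument in the literature is exactly the pointwise unwinding you carry out, so you are not taking a different route but rather filling in what the paper leaves to the reference.
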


In particular, the $V$-categorical structure $a$ of $(X,a)$ is a $V$-distributor $a\colon(X,a)\dist(X,a)$, and therefore a $V$-functor $a\colon(X,a)^\op\otimes (X,a)\to(V,\hom)$, which induces, via the closed monoidal structure of $\VCat$, the \emph{Yoneda $V$-functor} $\yoneda_X\colon(X,a)\to (V,\hom)^{(X,a)^\op}$. Thanks to the theorem above, $V^{X^\op}$ can be equivalently described as
\[PX:=\{\varphi\colon X \dist E\,|\,\varphi \mbox{ $V$-distributor}\}.\]
Then the structure $\ta$ on $PX$ is given by \[\ta(\varphi,\psi)=\fspstr{\varphi}{\psi}=\bigwedge_{x\in X}\hom(\varphi(x),\psi(x)),\] for every $\varphi, \psi\colon X\dist E$, where by $\varphi(x)$ we mean $\varphi(x,*)$, or, equivalently, we consider the associated $V$-functor $\varphi\colon X\to V$. The Yoneda functor $\yoneda_X\colon X\to PX$ assigns to each $x\in X$ the distributor $x^*\colon X\dist E$, where we identify again $x\in X$ with the $V$-functor $x\colon E\to X$ assigning $x$ to the (unique) element of $E$. Then, for every $\varphi\in PX$ and $x\in X$, we have that \[\fspstr{\yoneda_X(x)}{\varphi}=\varphi(x),\]
as expected. In particular $\yoneda_X$ is a fully faithful $V$-functor, being injective on objects (i.e. an injective map) when $X$ is a separated $V$-category. We point out that $(V,\hom)$ is separated, and so is $PX$ for every $V$-category $X$.

For more information on $\VCat$ we refer to \cite[Appendix]{HN20}.

\section{The presheaf monad and its submonads}

The assignment $X\mapsto PX$ defines a functor $P\colon\VCat\to\VCat$: for each $V$-functor $f\colon X\to Y$, $Pf\colon PX\to PY$ assigns to each distributor $\xymatrix{X\ar[r]|{\circ}^{\varphi}&E}$ the distributor $\xymatrix{Y\ar[r]|{\circ}^{f^*}&X\ar[r]|{\circ}^{\varphi}&E}$. It is easily checked that the Yoneda functors $(\yoneda_X\colon X\to PX)_X$ define a natural transformation $\yoneda\colon 1\to P$. Moreover, since, for every $V$-functor $f$, the adjunction $f_*\dashv f^*$ yields an adjunction $Pf=(\;)\cdot f^*\dashv (\;)\cdot f_*=:Qf$, $P\yoneda_X$ has a right adjoint, which we denote by $\mult_X\colon PPX\to PX$. It is straightforward to check that $\PP=(P,\mult,\yoneda)$ is a 2-monad on $\VCat$ -- the so-called \emph{presheaf monad} --, which, by construction of $\mult_X$ as the right adjoint to $P\yoneda_X$, is lax idempotent (see \cite{Ho11} for details).\\

Next we present a characterisation of the submonads of $\PP$ which is partially in \cite{CH08}. We recall that, given two monads $\TT=(T,\mu,\eta)$, $\TT'=(T',\mu',\eta')$ on a category $\C$, a monad morphism $\sigma\colon\TT\to\TT'$ is a natural transformation $\sigma\colon T\to T'$ such that
\begin{equation}\label{eq:monadmorphism}
\xymatrix{1\ar[r]^{\eta}\ar[rd]_{\eta'}&T\ar[d]^{\sigma}&&TT\ar[r]^-{\sigma_T}\ar[d]_{\mu}&T'T\ar[r]^-{T'\sigma}&T'T'\ar[d]^{\mu'}\\
&T'&&T\ar[rr]_{\sigma}&&T'}
\end{equation}
By \emph{submonad of $\PP$} we mean a 2-monad $\TT=(T,\mu,\eta)$ on $\VCat$ with a monad morphism $\sigma:\TT\to\PP$ such that $\sigma_X$ is an embedding (i.e. both fully faithful and injective on objects) for every $V$-category $X$.

\begin{definition}\label{def:admi}
Given a class $\Phi$ of $V$-distributors, for every $V$-category $X$ let \[\Phi X=\{\varphi\colon X\dist E\,|\,\varphi\in\Phi\}\] have the $V$-category structure inherited from the one of $PX$. We say that $\Phi$ is \emph{admissible} if, for every $V$-functor $f\colon X\to Y$ and $V$-distributors $\varphi\colon Z\dist Y$ and $\psi\colon X\dist Z$ in $\Phi$,
\begin{itemize}
\item[(1)] $f^*\in\Phi$;
\item[(2)] $\psi\cdot f^*\in\Phi$ and $f^*\cdot \varphi\in\Phi$;
\item[(3)] $\varphi\in\Phi\;\Leftrightarrow\;(\forall y\in Y)\;y^*\cdot\varphi\in\Phi$;
\item[(4)] for every $V$-distributor $\gamma\colon PX\dist E$, if the restriction of $\gamma$ to $\Phi X$ belongs to $\Phi$, then $\gamma\cdot(\yoneda_X)_*\in\Phi$.
\end{itemize}
\end{definition}

\begin{lemma}
Every admissible class $\Phi$ of $V$-distributors induces a submonad $\Phi=(\Phi,\mult^\Phi,\yoneda^\Phi)$ of $\PP$.
\end{lemma}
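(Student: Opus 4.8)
The plan is to show that the assignment $X\mapsto\Phi X$ assembles into a functor $\Phi\colon\VCat\to\VCat$ together with unit and multiplication making $\Phi$ a submonad of $\PP$, essentially by restricting the structure of $\PP$ along the inclusions $\Phi X\hookrightarrow PX$. First I would verify that $\Phi$ is well-defined on morphisms: for a $V$-functor $f\colon X\to Y$ and $\varphi\in\Phi X$ (so $\varphi\colon X\dist E$ with $\varphi\in\Phi$), one needs $Pf(\varphi)=\varphi\cdot f^*\in\Phi$, i.e. that it lies in $\Phi Y$. Since $f^*\in\Phi$ by axiom (1) and $\varphi\cdot f^*$ is precisely a composite of the form $\psi\cdot f^*$ with $\psi=\varphi\in\Phi$, axiom (2) gives exactly this. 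Thus $\Phi f:=Pf|_{\Phi X}\colon\Phi X\to\Phi Y$ is a well-defined $V$-functor (it is a $V$-functor because its restriction of a $V$-functor between subcategories with the inherited structure), and functoriality of $\Phi$ follows from that of $P$. The inclusions $\sigma_X\colon\Phi X\hookrightarrow PX$ are embeddings by construction (fully faithful since $\Phi X$ carries the inherited structure, injective on objects since it is a literal subset inclusion), and naturality of $\sigma$ in $X$ is immediate from $\Phi f=Pf|_{\Phi X}$.

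Next I would produce the unit $\yoneda^\Phi_X\colon X\to\Phi X$: this must be the corestriction of $\yoneda_X\colon X\to PX$, which sends $x$ to $x^*\colon X\dist E$. For this corestriction to exist we need $x^*\in\Phi$ for every $x\in X$; this is the case $f=x\colon E\to X$ of axiom (1). Naturality of $\yoneda^\Phi$ and the triangle $\sigma\circ\yoneda^\Phi=\yoneda$ are then inherited from the corresponding facts for $\yoneda$. For the multiplication $\mult^\Phi_X\colon\Phi\Phi X\to\Phi X$, the natural move is to restrict $\mult_X\colon PPX\to PX$ along $\Phi\Phi X\hookrightarrow PPX$ (using $\Phi\Phi X\subseteq P\Phi X\subseteq PPX$ via $P\sigma_X$) and check the image lands in $\Phi X$. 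Recall $\mult_X$ is the right adjoint of $P\yoneda_X$, so concretely $\mult_X(\gamma)=\gamma\cdot(\yoneda_X)_*$ for $\gamma\colon PX\dist E$. Given $\Gamma\in\Phi\Phi X$, its image in $PPX$ is a distributor $\gamma\colon PX\dist E$ whose restriction to $\Phi X$ equals $\Gamma$ and hence lies in $\Phi$; axiom (4) then yields exactly $\mult_X(\gamma)=\gamma\cdot(\yoneda_X)_*\in\Phi$, i.e. in $\Phi X$. So $\mult^\Phi_X$ is well-defined, and I would note it is a $V$-functor as a restriction, with naturality inherited.

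It then remains to check the monad laws for $(\Phi,\mult^\Phi,\yoneda^\Phi)$ and the monad-morphism square for $\sigma$. Here I would argue that all of these identities hold \emph{because} they hold in $\PP$ and every $\sigma_X$ is injective on objects: the unit laws $\mult^\Phi\circ\Phi\yoneda^\Phi=\id=\mult^\Phi\circ\yoneda^\Phi_\Phi$ and associativity $\mult^\Phi\circ\Phi\mult^\Phi=\mult^\Phi\circ\mult^\Phi_\Phi$ become, after postcomposing with the monomorphism $\sigma_X$ and using naturality of $\sigma$ together with $\sigma\circ\yoneda^\Phi=\yoneda$ and the (easily checked) compatibility $\sigma_X\circ\mult^\Phi_X=\mult_X\circ\sigma_{PX}\circ P\sigma_X$, precisely the corresponding laws for $\PP$, which hold since $\PP$ is a monad. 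Likewise the right-hand square of \eqref{eq:monadmorphism} for $\sigma$ reduces to this compatibility identity, and the left-hand triangle is $\sigma\circ\yoneda^\Phi=\yoneda$. The one genuinely delicate point — the place I expect to be the main obstacle — is confirming that $\Phi\Phi X$ really sits inside $PPX$ in the intended way, i.e. that the $V$-category structure $\Phi$ assigns to $\Phi(\Phi X)$ (the inherited structure from $P(\Phi X)$) matches the structure inherited from $PPX$ under $P\sigma_X$, so that axiom (4) is applicable verbatim; this uses that $\sigma_X=(\yoneda\text{-type inclusion})$ is fully faithful, hence $P\sigma_X$ is fully faithful, so the two structures on $\Phi\Phi X$ coincide. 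Once that identification is in place, axioms (1), (2), (4) are each used exactly once as above and the verification is routine; axiom (3) is what guarantees $\Phi$ is large enough to be closed under the operations needed elsewhere but is not strictly needed for this particular lemma beyond ensuring the class is well-behaved.
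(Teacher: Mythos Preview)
Your proposal is correct and follows essentially the same route as the paper: restrict $P$, $\yoneda$, and $\mult$ along the embeddings $\sigma_X\colon\Phi X\hookrightarrow PX$, using axiom (1) for the unit, axiom (2) for the functor action, and axiom (4) for the multiplication, then deduce the monad laws and the monad-morphism squares from those of $\PP$ via the pointwise monicity of $\sigma$. Your observation that axiom (3) is not actually invoked in this lemma is accurate (the paper's proof does not use it either), and your handling of the multiplication---extending $\Gamma\in\Phi\Phi X$ to $\gamma=\Gamma\cdot(\sigma_X)^*$ and applying (4) to its restriction---matches the paper's computation, which additionally notes $\gamma\cdot(\yoneda_X)_*=\Gamma\cdot(\yoneda^\Phi_X)_*$ via full faithfulness of $\sigma_X$.
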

\begin{proof}
For each $V$-category $X$, equip $\Phi X$ with the initial structure induced by the inclusion $\sigma_X\colon \Phi X\to PX$, that is, for every $\varphi,\psi\in \Phi X$, $\Phi X(\varphi,\psi)=PX(\varphi,\psi)$. For each $V$-functor $f\colon X\to Y$ and $\varphi\in\Phi X$, by condition (2), $\varphi\cdot f^*\in\Phi$, and so $Pf$ (co)restricts to $\Phi f\colon\Phi X\to\Phi Y$.

Condition (1) guarantees that $\yoneda_X\colon X\to PX$ corestricts to $\yoneda^\Phi_X\colon X\to \Phi X$.

Finally, condition (4) guarantees that $\mult_X\colon PPX\to PX$ also (co)restricts to $\mult^\Phi_X:\Phi\Phi X\to\Phi X$: if $\gamma\colon\Phi X\dist E$ belongs to $\Phi$, then $\widetilde{\gamma}:=\gamma\cdot (\sigma_X)^*\colon PX\dist E$ belongs to $\Phi$ by (2), and then, since $\gamma$ is the restriction of $\widetilde{\gamma}$ to $\Phi X$, by (4) $\mult_X(\widetilde{\gamma})=\gamma\cdot(\sigma_X)^*\cdot(\yoneda_X)_*
=\gamma\cdot(\sigma_X)^*\cdot(\sigma_X)_*\cdot(\yoneda^\Phi_X)_*=\gamma\cdot(\yoneda^\Phi_X)_*\in\Phi$.

By construction, $(\sigma_X)_X$ is a natural transformation, each $\sigma_X$ is an embedding, and $\sigma$ makes diagrams \eqref{eq:monadmorphism} commute.
\end{proof}

\begin{theorem}\label{th:Phi}
For a 2-monad $\TT=(T,\mu,\eta)$ on $\VCat$, the following assertions are equivalent:
\begin{tfae}
\item $\TT$ is isomorphic to $\Phi$, for some admissible class of $V$-distributors $\Phi$.
\item $\TT$ is a submonad of $\PP$.
\end{tfae}
\end{theorem}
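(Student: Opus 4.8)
For (i)$\,\Rightarrow\,$(ii) there is almost nothing to do: given a monad isomorphism $\TT\cong\Phi$ with $\Phi$ admissible, the previous Lemma provides a monad morphism $\Phi\to\PP$ with embedding components, and precomposing with the isomorphism gives a monad morphism $\TT\to\PP$ whose components are composites of a $V$-categorical isomorphism with an embedding, hence again embeddings. The whole content is (ii)$\,\Rightarrow\,$(i), and the plan is to recover an admissible class from the monad morphism.

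Let $\sigma\colon\TT\to\PP$ be the given monad morphism, every $\sigma_X$ an embedding. Since $\sigma_X$ is fully faithful and injective on objects, I identify $TX$ with the sub-$V$-category $\Phi X:=\sigma_X(TX)$ of $PX$; from $\sigma\circ\eta=\yoneda$ one gets $x^*=\yoneda_X(x)\in\Phi X$ for every $x\in X$. Following condition~(3) of Definition~\ref{def:admi}, I then let $\Phi$ consist of those $V$-distributors $\varphi\colon Z\dist Y$ such that $y^*\cdot\varphi\in\Phi Z$ for every $y\in Y$; for $Y=E$ this gives back the sets $\Phi X$, so the notation is consistent, and $\Phi$ is a class of distributors determined by the family $(\Phi X)_X$.

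The main work is to check that $\Phi$ is admissible. Conditions (1)--(3) I expect to be direct: (1) holds because $x^*\cdot f^*=(fx)^*=\yoneda_Y(fx)\in\Phi Y$ for all $x$; (2) holds because, writing $z^*\cdot\psi=\sigma_X(t)$, naturality of $\sigma$ gives $(z^*\cdot\psi)\cdot f^*=Pf(\sigma_X(t))=\sigma_Y(Tf(t))\in\Phi Y$, while $x^*\cdot(f^*\cdot\varphi)=(fx)^*\cdot\varphi\in\Phi Z$ follows at once from $\varphi\in\Phi$; and (3) is immediate by construction. Condition~(4) is the step I expect to be the real obstacle, since it is the one that forces the multiplication into play. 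Given $\gamma\colon PX\dist E$ whose restriction $\gamma\cdot(\sigma_X)_*\colon TX\dist E$ to $\Phi X$ lies in $\Phi$, one has $\gamma\cdot(\sigma_X)_*=\sigma_{TX}(t)$ for some $t\in TTX$; then, combining $\yoneda_X=\sigma_X\circ\eta_X$, the descriptions $\mult_X=(-)\cdot(\yoneda_X)_*$ and $P\sigma_X=(-)\cdot(\sigma_X)^*$, the right-hand square of \eqref{eq:monadmorphism} in the form $\sigma_X\circ\mu_X=\mult_X\circ P\sigma_X\circ\sigma_{TX}$, and the full faithfulness of $\sigma_X$ as $(\sigma_X)^*\cdot(\sigma_X)_*=1_{TX}$, one computes
\[
\gamma\cdot(\yoneda_X)_*=\sigma_{TX}(t)\cdot(\eta_X)_*=\sigma_{TX}(t)\cdot(\sigma_X)^*\cdot(\sigma_X)_*\cdot(\eta_X)_*=\sigma_X(\mu_X(t))\in\Phi X ,
\]
which is precisely condition~(4) (and along the way identifies $\mult^\Phi_X$ with $\mu_X$).

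Once admissibility is established, the previous Lemma yields the submonad $\Phi=(\Phi,\mult^\Phi,\yoneda^\Phi)$ of $\PP$, and it remains to see that the isomorphisms $\tau_X\colon TX\to\Phi X$ assemble into a monad isomorphism $\TT\cong\Phi$. Naturality of $\tau$ and its compatibility with units and multiplications follow from naturality of $\sigma$, the two squares of \eqref{eq:monadmorphism}, and the fact that $\Phi f$, $\yoneda^\Phi$ and $\mult^\Phi$ are by construction (co)restrictions of $Pf$, $\yoneda$ and $\mult$ — a routine diagram chase, the multiplication square being the computation just performed. Thus $\TT$ is isomorphic to $\Phi$ for the admissible class $\Phi$, establishing (i).
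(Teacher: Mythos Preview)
Your argument is correct and follows the same route as the paper: define $\Phi$ via \eqref{eq:fai}, verify conditions (1)--(4) of Definition~\ref{def:admi} with essentially the same one-line checks, and invoke the preceding Lemma. Your treatment of condition~(4) unwinds the monad-morphism square more explicitly than the paper does (the paper compresses it to $\mu_X(\gamma\cdot(\sigma_X)_*)=\gamma\cdot(\sigma_X)_*\cdot(\eta_X)_*=\gamma\cdot(\yoneda_X)_*$, silently using $(\sigma_X)^*\cdot(\sigma_X)_*=1$ and $\mult_X\circ P\sigma_X\circ\sigma_{TX}=\sigma_X\circ\mu_X$), and you also spell out the final identification $\TT\cong\Phi$, which the paper leaves implicit; but these are matters of detail rather than a different approach.
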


\begin{proof}
(i) $\Rightarrow$ (ii) follows from the lemma above.\\

(ii) $\Rightarrow$ (i): Let $\sigma\colon\TT\to\PP$ be a monad morphism, with $\sigma_X$ an embedding for every $V$-category $X$, which, for simplicity, we assume to be an inclusion. First we show that
\begin{equation}\label{eq:fai}
\Phi=\{\varphi\colon X\dist Y\,|\,\forall y\in Y\;y^*\cdot\varphi\in TX\}
\end{equation}
is admissible. In the sequel $f\colon X\to Y$ is a $V$-functor.\\

(1) For each $x\in X$, $x^*\cdot f^*=f(x)^*\in TY$, and so $f^*\in\Phi$.\\

(2) If $\psi\colon X\dist Z$ is a $V$-distributor in $\Phi$, and $z\in Z$, since $z^*\cdot\psi\in TX$, $T f(z^*\cdot\psi)=z^*\cdot\psi\cdot f^*\in TY$, and therefore $\psi\cdot f^*\in \Phi$ by definition of $\Phi$.
Now, if $\varphi\colon Z\dist Y\in\Phi$, then, for each $x\in X$, $x^*\cdot f^*\cdot\varphi=f(x)^*\cdot\varphi\in TZ$ because $\varphi\in\Phi$, and so $f^*\cdot\varphi\in\Phi$.\\

(3) follows from the definition of $\Phi$.\\

(4) If the restriction of $\gamma\colon PX\dist E$ to $TX$, i.e., $\gamma\cdot(\sigma_X)_*$, belongs to $\Phi$, then $\mu_X(\gamma\cdot(\sigma_X)_*)=\gamma\cdot(\sigma_X)_*\cdot(\eta_X)_*=\gamma\cdot(\yoneda_X)_*$ belongs to $TX$.
\end{proof}

We point out that, with $\PP$, also $\TT$ is lax idempotent. This assertion is shown at the end of next section, making use of the Beck-Chevalley conditions we study next. (We note that the arguments of \cite[Prop. 16.2]{CLF20}, which states conditions under which a submonad of a lax idempotent monad is still lax idempotent, cannot be used directly here.)

\section{The presheaf monad and Beck-Chevalley conditions}

In this section our aim is to show that $\PP$ verifies some interesting conditions of Beck-Chevalley type, that resemble the BC conditions studied in \cite{CHJ14}. We recall from \cite{CHJ14} that a commutative square in $\Set$
\[\xymatrix{W\ar[r]^l\ar[d]_g&Z\ar[d]^h\\
X\ar[r]_f&Y}\]
is said to be a \emph{BC-square} if the following diagram commutes in $\Rel$
\[\xymatrix{W\ar[r]|-{\object@{|}}^{l_\circ}&Z\\
X\ar[u]|-{\object@{|}}^{g^\circ}\ar[r]|-{\object@{|}}_{f_\circ}&Y,\ar[u]|-{\object@{|}}_{h^\circ}}\]
where, given a map $t\colon A \to B$, $t_\circ\colon A\relto B$ denotes the relation defined by $t$ and $t^\circ\colon B\relto A$ its opposite. Since $t_\circ\dashv t^\circ$ in $\Rel$, this is in fact a kind of Beck-Chevalley condition. A $\Set$-endofunctor $T$ is said to satisfy BC if it preserves BC-squares, while a natural transformation $\alpha\colon T\to T'$ between two $\Set$-endofunctors satisfies BC if, for each map $f\colon X\to Y$, its naturality square
\[\xymatrix{TX\ar[r]^{\alpha_X}\ar[d]_{Tf}&T'X\ar[d]^{T'f}\\
TY\ar[r]_{\alpha_Y}&T'Y}\]
is a BC-square.

In our situation, for endofunctors and natural transformations in $\VCat$, the role of $\Rel$ is played by $\VDist$.

\begin{definition}
A commutative square in $\VCat$
\[\xymatrix{(W,d)\ar[r]^l\ar[d]_g&(Z,c)\ar[d]^h\\
(X,a)\ar[r]_f&(Y,b)}\]
is said to be a \emph{BC*-square} if the following diagram commutes in $\VDist$
\begin{equation}\label{diag:BC*}
\xymatrix{(W,d)\ar[r]|-{\circ}^{l_*}&(Z,c)\\
(X,a)\ar[u]|-{\circ}^{g^*}\ar[r]|-{\circ}_{f_*}&(Y,b)\ar[u]|-{\circ}_{h^*}}\end{equation}
(or, equivalently, $h^*\cdot f_*\leq l_*\cdot g^*$).
\end{definition}

\begin{remarks}\label{rem:BC*}
\begin{enumerate}
\item For a $V$-functor $f\colon(X,a)\to(Y,b)$, to be fully faithful is equivalent to
\[\xymatrix{(X,a)\ar[r]^1\ar[d]_1&(X,a)\ar[d]^f\\
(X,a)\ar[r]_f&(Y,b)}\]
being a BC*-square (exactly in parallel with the characterisation of monomorphisms via BC-squares).
\item We point out that, contrarily to the case of BC-squares,  in BC*-squares the horizontal and the vertical arrows play different roles; that is, the fact that diagram \eqref{diag:BC*} is a BC*-square is not equivalent to
\[\xymatrix{(W,d)\ar[r]^g\ar[d]_l&(X,a)\ar[d]^f\\
(Z,c)\ar[r]_h&(Y,b)}\]
being a BC*-square; it is indeed equivalent to its \emph{dual}
\[\xymatrix{(W,d^\circ)\ar[r]^g\ar[d]_l&(X,a^\circ)\ar[d]^f\\
(Z,c^\circ)\ar[r]_h&(Y,b^\circ)}\]
being a BC*-square.
\end{enumerate}
\end{remarks}

\begin{definitions}
\begin{enumerate}
\item A \emph{functor $T\colon \VCat\to\VCat$ satisfies BC*} if it preserves BC*-squares.
\item Given two endofunctors $T,T'$ on $\VCat$, a \emph{natural transformation $\alpha\colon T\to T'$ satisfies BC*} if the naturality diagram
    \[\xymatrix{TX\ar[r]^{\alpha_X}\ar[d]_{Tf}&T'X\ar[d]^{T'f}\\
TY\ar[r]_{\alpha_Y}&T'Y}\]
is a BC*-square for every morphism $f$ in $\VCat$.
\item \emph{A 2-monad $\TT=(T,\mu,\eta)$} on $\VCat$ is said to satisfy \emph{fully BC*} if $T$, $\mu$, and $\eta$ satisfy BC*.
\end{enumerate}
\end{definitions}

\begin{remark}
In the case of $\Set$ and $\Rel$, since the condition of being a BC-square is equivalent, under the Axiom of Choice (AC), to being a weak pullback, a $\Set$-monad $\TT$ \emph{satisfies fully BC} if, and only if, it is \emph{weakly cartesian} (again, under (AC)). This, together with the fact that there are relevant $\Set$-monads -- like for instance the ultrafilter monad -- whose functor and multiplication satisfy BC but the unit does not, led the authors of \cite{CHJ14} to name such monads as \emph{BC-monads}. This is the reason why we use \emph{fully BC*} instead of BC* to identify these $\VCat$-monads.

As a side remark we recall that, still in the $\Set$-context, a partial BC-condition was studied by Manes in \cite{Ma02}: for a $\Set$-monad $\TT=(T,\mu,\eta)$ to be \emph{taut} requires that $T$, $\mu$, $\eta$ satisfy BC for commutative squares where $f$ is monic.
\end{remark}

Our first use of BC* is the following characterisation of lax idempotency for a 2-monad $\TT$ on $\VCat$.

\begin{prop}\label{prop:laxidpt} Let $\TT=(T,\mu,\eta)$ be a 2-monad on $\VCat$.
\begin{enumerate}
\item The following assertions are equivalent:
\begin{tfae}
\item[\em (i)] $\TT$ is lax idempotent.
\item[\em (ii)] For each $V$-category $X$, the diagram
\begin{equation}\label{eq:laxidpt}
\xymatrix{TX\ar[r]^-{T\eta_X}\ar[d]_{\eta_{TX}}&TTX\ar[d]^{\mu_X}\\
TTX\ar[r]_-{\mu_X}&TX}
\end{equation}
is a BC*-square.
\end{tfae}
\item If $\TT$ is lax idempotent, then $\mu$ satisfies BC*.
\end{enumerate}
\end{prop}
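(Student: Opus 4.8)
The plan is to prove both parts together by unpacking what lax idempotency says in terms of distributors, and then reading off the BC* conditions. Recall that a 2-monad $\TT$ on a 2-category is lax idempotent precisely when, for each $X$, $\mu_X \dashv \eta_{TX}$ (with $\mu_X\cdot\eta_{TX}=1$), equivalently $T\eta_X \dashv \mu_X$ (with $\mu_X\cdot T\eta_X=1$). The first thing I would do is translate the adjunction $\mu_X \dashv \eta_{TX}$ in $\VCat$ into the language of the 2-functors $(\;)_*$ and $(\;)^*$: by Remark~\ref{rem:adjcond}, $\mu_X\dashv\eta_{TX}$ is equivalent to $(\mu_X)_* = (\eta_{TX})^*$, and similarly $T\eta_X\dashv\mu_X$ is equivalent to $(T\eta_X)_* = (\mu_X)^*$. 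Since $(\;)_*$ is a $2$-functor on $\VCat^{\mathrm{co}}$ and $(\;)^*$ on $\VCat^{\mathrm{op}}$, the monad identities $\mu_X\cdot T\eta_X = 1_{TX} = \mu_X\cdot\eta_{TX}$ give us the four relevant distributor identities for free.

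For part (1), I want to show the square \eqref{eq:laxidpt} is a BC*-square iff $\TT$ is lax idempotent. By the definition of BC*-square applied to \eqref{eq:laxidpt} (with the horizontal maps $T\eta_X$ and $\mu_X$, and the vertical maps $\eta_{TX}$ and $\mu_X$), the condition is $(\mu_X)^*\cdot(T\eta_X)_* \leq (\mu_X)_*\cdot(\eta_{TX})^*$. Now I would compute both sides assuming lax idempotency: using $(\mu_X)^* = (T\eta_X)_*$ and $(\mu_X)_* = (\eta_{TX})^*$, the left side becomes $(T\eta_X)_*\cdot(T\eta_X)_*$ and — more to the point — using the monad law $\mu_X\cdot T\eta_X = 1$ one gets $(T\eta_X)_*\cdot(\mu_X)_* = 1$, hmm; rather, the cleaner route is: the right-hand side $(\mu_X)_*\cdot(\eta_{TX})^* = (\eta_{TX})^*\cdot(\eta_{TX})^*$, and since $\mu_X\cdot\eta_{TX}=1$ gives $(\eta_{TX})^*\cdot(\mu_X)^* = (\mu_X\cdot\eta_{TX})^* = 1$, so actually I should substitute carefully. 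The honest computation is: LHS $= (\mu_X)^*\cdot(T\eta_X)_*$; since $\eta_{TX}\dashv$ — no. The substitutions to use are $(T\eta_X)_* = (\mu_X)^*$ and $(\eta_{TX})^* = (\mu_X)_*$, which turn the desired inequality $(\mu_X)^*\cdot(T\eta_X)_* \leq (\mu_X)_*\cdot(\eta_{TX})^*$ into $(\mu_X)^*\cdot(\mu_X)^* \leq (\mu_X)_*\cdot(\mu_X)_*$? That is false in general, so the right manipulation is instead to only substitute on one side: e.g. replace $(\eta_{TX})^*$ by $(\mu_X)_*$ in the RHS is wrong direction — I'd substitute $(\mu_X)_* = (\eta_{TX})^*$ into the RHS to get $(\eta_{TX})^*\cdot(\eta_{TX})^*$, and use $(T\eta_X)_*\dashv (T\eta_X)^*$ plus $\mu_X\cdot T\eta_X=1$ to bound the LHS. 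The key adjunction facts I will lean on are that for any $V$-functor $p$, $p_*\dashv p^*$ and $p^*\cdot p_* \geq a$, $p_*\cdot p^*\leq b$, together with $p_*\cdot q_* = (qp)_*$ whenever composable. So the LHS $= (\mu_X)^*\cdot(T\eta_X)_*$; post-composing conceptually, $(\mu_X)_*\cdot(\mu_X)^*\cdot(T\eta_X)_* \leq 1\cdot(T\eta_X)_* = (T\eta_X)_*$, and $(T\eta_X)_* = (\mu_X)^*$ under lax idempotency; meanwhile $(\mu_X)_*\cdot(\eta_{TX})^* = (\mu_X)_*\cdot(\mu_X)_* = (\mu_X\mu_X)_*$... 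For the converse I would conversely deduce from the BC*-inequality, combined with the always-valid reverse inequalities coming from the unit laws and the adjunctions $(\eta_{TX})_*\dashv(\eta_{TX})^*$ etc., that $(\mu_X)_* = (\eta_{TX})^*$, hence $\mu_X\dashv\eta_{TX}$, hence lax idempotency.

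For part (2), once lax idempotency is in hand, I want to show every naturality square of $\mu$, namely
\[
\xymatrix{TTX\ar[r]^{\mu_X}\ar[d]_{TTf}&TX\ar[d]^{Tf}\\ TTY\ar[r]_{\mu_Y}&TY}
\]
is a BC*-square, i.e. $(Tf)^*\cdot(\mu_X)_* \leq (\mu_Y)_*\cdot(TTf)^*$. Here I would substitute $(\mu_X)_* = (\eta_{TX})^*$ and $(\mu_Y)_* = (\eta_{TY})^*$, reducing the goal to $(Tf)^*\cdot(\eta_{TX})^* \leq (\eta_{TY})^*\cdot(TTf)^*$, i.e. $(\eta_{TX}\circ Tf)^* \leq \ldots$ no — $(Tf)^*\cdot(\eta_{TX})^* = (\eta_{TX}\cdot (Tf))^*$ read via the $2$-functor $(\;)^*$ as $(\;)^*$ sends composites to reversed composites, giving $(\eta_{TX}\circ Tf)^*$; and by naturality of $\eta$, $\eta_{TX}\circ Tf = TTf\circ \eta_{TX}$... wait, $\eta$ is natural so $\eta_{TY}\circ Tf = TTf \circ \eta_{TX}$ as maps $TX\to TTY$. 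Thus $(Tf)^*\cdot(\eta_{TX})^* = (\eta_{TX})^*$ composed appropriately equals $(\eta_{TY}\circ Tf)^* = (TTf\circ\eta_{TX})^*$, hmm; I need to be careful about which side the contravariance lands. The upshot I expect is that after using $(\mu)_* = (\eta_T)^*$ the square collapses, via naturality of $\eta$, to an identity or to an inequality that holds automatically from the adjunctions $(TTf)_*\dashv(TTf)^*$. \textbf{The main obstacle} I anticipate is precisely this bookkeeping of variance: $(\;)_*$ is covariant and $(\;)^*$ contravariant, distributors compose like relations, and several inequalities are only one-directional, so I must track carefully which substitutions preserve $\leq$ in which direction and which facts ($p_*\dashv p^*$, $p^*\cdot p_*\geq 1$, $p_*\cdot p^*\leq 1$, functoriality of $(\;)_*$ and $(\;)^*$, and naturality of $\eta$) are invoked at each step; the rest is routine $2$-categorical algebra.
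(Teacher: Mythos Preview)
Your overall strategy is the paper's strategy: translate lax idempotency into the distributor identities $(T\eta_X)_*=(\mu_X)^*$ and $(\mu_X)_*=(\eta_{TX})^*$ via Remark~\ref{rem:adjcond}, and then read off the BC* conditions. The gap is that you have written down the \emph{wrong} BC* condition for the square~\eqref{eq:laxidpt}, and this is why your Part~(1) computation goes in circles.

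Look again at the definition: in the generic square with top $l$, bottom $f$, left $g$, right $h$, the BC* condition is $h^*\cdot f_* = l_*\cdot g^*$ (equivalently $h^*\cdot f_*\leq l_*\cdot g^*$, the reverse inequality being automatic). For \eqref{eq:laxidpt} this reads
\[
(\mu_X)^*\cdot(\mu_X)_* \;=\; (T\eta_X)_*\cdot(\eta_{TX})^*,
\]
a composite $TTX\dist TTX$ on both sides. Your version $(\mu_X)^*\cdot(T\eta_X)_* \leq (\mu_X)_*\cdot(\eta_{TX})^*$ does not even type-check: $(T\eta_X)_*\colon TX\dist TTX$ cannot be followed by $(\mu_X)^*\colon TX\dist TTX$. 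Once you have the correct condition, (i)$\Rightarrow$(ii) is a one-line substitution using both identities at once, and (ii)$\Rightarrow$(i) is the computation
\[
(\mu_X)_*=(\mu_X)_*\,(\mu_X)^*\,(\mu_X)_*=(\mu_X)_*\,(T\eta_X)_*\,(\eta_{TX})^*=(\mu_X\cdot T\eta_X)_*\,(\eta_{TX})^*=(\eta_{TX})^*,
\]
which gives $\mu_X\dashv\eta_{TX}$. No inequalities need to be chased.

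The same mis-reading afflicts your Part~(2). The BC* condition for the naturality square of $\mu$ at $f$ is $(Tf)^*\cdot(\mu_Y)_*=(\mu_X)_*\cdot(TTf)^*$ (both sides $TTY\dist TX$), not the formula you wrote. After substituting $(\mu_X)_*=(\eta_{TX})^*$ and $(\mu_Y)_*=(\eta_{TY})^*$, this becomes $(Tf)^*\cdot(\eta_{TY})^*=(\eta_{TX})^*\cdot(TTf)^*$, i.e.\ $(\eta_{TY}\cdot Tf)^*=(TTf\cdot\eta_{TX})^*$, which is immediate from naturality of $\eta$. So your intended argument for (2) is correct once the bookkeeping is fixed; there is no residual inequality to worry about.
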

\begin{proof}
(1) (i) $\Rightarrow$ (ii): The monad $\TT$ is lax idempotent if, and only if, for every $V$-category $X$, $T\eta_X\dashv \mu_X$, or, equivalently, $\mu_X\dashv \eta_{TX}$. These two conditions are equivalent to $(T\eta_X)_*=(\mu_X)^*$ and $(\mu_X)_*=(\eta_{TX})^*$. Hence $(\mu_X)^*(\mu_X)_*=(T\eta_X)_* (\eta_{TX})^*$ as claimed.

(ii) $\Rightarrow$ (i): From $(\mu_X)^* (\mu_X)_*=(T\eta_X)_* (\eta_{TX})^*$ it follows that
\[(\mu_X)_*=(\mu_X)_* (\mu_X)^* (\mu_X)_*=(\mu_X\cdot T\eta_X)_* (\eta_{TX})^*=(\eta_{TX})^*,\]
that is, $\mu_X\dashv \eta_{TX}$.\\

(2) BC* for $\mu$ follows directly from lax idempotency of $\TT$, since
\[\xymatrix{TTX\ar[r]^-{(\mu_X)_*}|-{\circ}&TX\ar@{}[rrd]|{=}&&TTX\ar[r]^-{(\eta_{TX})^*}|-{\circ}&TX\\
TTY\ar[u]^{(TTf)^*}|-{\circ}\ar[r]_-{(\mu_Y)_*}|-{\circ}&TY\ar[u]_{(Tf)^*}|-{\circ}&&
TTY\ar[u]^{(TTf)^*}|-{\circ}\ar[r]_-{(\eta_{TY})^*}|-{\circ}&TY\ar[u]_{(Tf)^*}|-{\circ}}\]
and the latter diagram commutes trivially.\\
\end{proof}

\begin{remark}
Thanks to Remarks \ref{rem:BC*} we know that, if we invert the role of $\eta_{TX}$ and $T\eta_X$ in \eqref{eq:laxidpt}, we get a characterisation of oplax idempotent 2-monad: $\TT$ is oplax idempotent if, and only if, the diagram
\[\xymatrix{TX\ar[r]^-{\eta_{TX}}\ar[d]_{T\eta_{X}}&TTX\ar[d]^{\mu_X}\\
TTX\ar[r]_-{\mu_X}&TX}\]
is a BC*-square.
\end{remark}

\begin{theorem}
The presheaf monad $\PP=(P,\mult,\yoneda)$ satisfies fully BC*.
\end{theorem}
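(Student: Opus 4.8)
The plan is to verify the three ingredients of \emph{fully BC*} separately: that the functor $P$ preserves BC*-squares, that the unit $\yoneda$ satisfies BC*, and that the multiplication $\mult$ satisfies BC*. The last of these is immediate: $\PP$ is lax idempotent by construction (as recorded in Section~2), so Proposition~\ref{prop:laxidpt}(2) gives BC* for $\mult$ for free. For the unit, I would unwind what the naturality square of $\yoneda$ along a $V$-functor $f\colon X\to Y$ demands. The claim $(\yoneda_Y)^*\cdot f_* \leq (Pf)_*\cdot(\yoneda_X)^*$ should reduce, after translating the four distributors into their pointwise formulas, to an inequality in $V$ involving $\hom$, $\otimes$, and suprema over $X$; I expect it to follow from the Yoneda equality $\fspstr{\yoneda_X(x)}{\varphi}=\varphi(x)$ together with functoriality of $f$ and the adjunction $f_*\dashv f^*$. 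In fact it is cleanest to observe that $Pf$ has a right adjoint $Qf=(\;)\cdot f_*$ (noted just before the definition of $\mult$), so $(Pf)_*=(Qf)^*$, and then the required inequality becomes a statement about how $\yoneda$ interacts with the adjoint pair $Pf\dashv Qf$, which one checks directly on presheaves.

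The genuine work is showing $P$ preserves BC*-squares, and here I would exploit the equivalence in Theorem~\ref{th:fct_dist} together with the description $PX=\{\varphi\colon X\dist E\}$. Start with a BC*-square with corners $(W,d),(Z,c),(X,a),(Y,b)$ and arrows $l,g,f,h$, so that $h^*\cdot f_*\leq l_*\cdot g^*$ as distributors $X\dist Z$. I must produce $(Ph)^*\cdot(Pf)_*\leq(Pl)_*\cdot(Pg)^*$ as distributors $PX\dist PZ$. The key is that $Pf$, $Pg$, $Ph$, $Pl$ are all given by precomposition with the starred distributors of the original square — $Pf=(\;)\cdot f^*$ on $PX$, and so on — and that the distributor $(Pf)_*$ evaluated at $\varphi\in PX$, $\psi\in PW$ is $PW(Pf(\varphi),\psi)=\fspstr{\varphi\cdot f^*}{\psi}$, i.e. a meet of $\hom$-values. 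So the inequality to prove, after inserting these formulas, is a pointwise estimate: for $\varphi\in PX$ and $\vartheta\in PZ$, the quantity $PZ(Ph(\varphi),\vartheta) = PZ(\varphi\cdot h^*,\vartheta)$ — wait, more precisely $\bigwedge_z\hom((\varphi\cdot f_* \text{-image}\dots))$ — must be bounded by $\bigvee_{\omega\in PW}\fspstr{\varphi\cdot g^*}{\omega}\otimes\fspstr{\omega\cdot l_*}{\vartheta}$ or the appropriate such expression. The natural candidate witness is $\omega:=\varphi\cdot g^*\colon W\dist E$, and one should check that with this choice the supremum is attained using exactly the square hypothesis $h^*\cdot f_*\leq l_*\cdot g^*$.

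The main obstacle, I expect, is bookkeeping: getting the variances right. Recall from Remarks~\ref{rem:BC*}(2) that in a BC*-square the horizontal and vertical arrows are \emph{not} symmetric, and that $(\;)_*$ and $(\;)^*$ are contravariant in opposite ways ($(\;)_*\colon\VCat^\co\to\VDist$, $(\;)^*\colon\VCat^\op\to\VDist$). So I must be careful that the direction in which $P$ acts on $f_*$ versus $f^*$ comes out consistently — in particular that $Pf$ really corresponds to $(\;)\cdot f^*$ and hence $(Pf)^*$ and $(Pf)_*$ are computed from that. A safe route is to reduce everything to inequalities between $V$-relations and use the adjunctions $(Pf)_*\dashv(Pf)^*$ and $Pf\dashv Qf$ to turn the target inequality into its mate, which should be the directly verifiable statement $Pf\cdot(\text{something})\leq(\text{something})\cdot Pl$ at the level of $V$-functors $PX\to PZ$; then the BC*-hypothesis on the original square feeds in cleanly. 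Once $P$, $\yoneda$, and $\mult$ each satisfy BC*, the three together give fully BC* by definition, completing the proof.
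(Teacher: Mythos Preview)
Your handling of $\mult$ matches the paper exactly, and for $\yoneda$ the paper simply carries out the pointwise computation you sketch first (both sides of the naturality square evaluate to $(\varphi\cdot f^*)(y)$, so one even gets equality). The genuine gap is in your argument for $P$. Your candidate witness $\omega:=\varphi\cdot g^*$ does not typecheck: $\varphi\colon X\dist E$ and $g^*\colon X\dist W$ cannot be composed to give an element of $PW$. The obvious repair $\omega:=\varphi\cdot g_*$ \emph{is} in $PW$ but is the wrong choice: then $Pg(\omega)=\varphi\cdot g_*\cdot g^*\leq\varphi$, so the factor $PX(\varphi,Pg(\omega))$ gives no useful lower bound and the estimate stalls. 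The paper takes instead $\omega:=\psi\cdot l_*$, built from the \emph{other} corner of the square; this makes $PZ(Pl(\omega),\psi)=PZ(\psi\cdot l_*\cdot l^*,\psi)\geq k$ for free, and reduces the remaining factor to $PY(Pf(\varphi),Ph(\psi))\leq PX(\varphi,\psi\cdot l_*\cdot g^*)$, which follows from the BC*-hypothesis $h^*\cdot f_*\leq l_*\cdot g^*$ after restricting a meet over $Y$ along $f$ and using $\varphi\leq\varphi\cdot f^*\cdot f_*$.

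Your fallback via mates is actually a valid (and cleaner) alternative to the paper's computation, but you have mis-identified the mate. Using $(Pf)_*=(Qf)^*$ and $(Pl)_*=(Ql)^*$ (from $Pf\dashv Qf$, $Pl\dashv Ql$), the target $(Ph)^*\cdot(Pf)_*\leq(Pl)_*\cdot(Pg)^*$ rewrites as $(Qf\cdot Ph)^*\leq(Pg\cdot Ql)^*$; since $(\;)^*$ preserves 2-cells, this follows from $Qf\cdot Ph\leq Pg\cdot Ql$ as $V$-functors $PZ\to PX$ --- not $PX\to PZ$, and not built from $Pf$ and $Pl$ as you wrote. That last inequality is immediate: for $\psi\in PZ$ one has $Qf(Ph(\psi))=\psi\cdot h^*\cdot f_*\leq\psi\cdot l_*\cdot g^*=Pg(Ql(\psi))$, directly from the square hypothesis. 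So the bookkeeping you flagged as the main obstacle really is the crux, and in both of your attempts the variances came out wrong.
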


\begin{proof}
(1) \emph{$P$ satisfies BC*}: Given a BC*-square
\[\xymatrix{(W,d)\ar[r]^l\ar[d]_g&(Z,c)\ar[d]^h\\
(X,a)\ar[r]_f&(Y,b)}\]
in $\VCat$, we want to show that
\begin{equation}\label{eq:BC}
\xymatrix{PW\ar[r]|-{\circ}^{(Pl)_*}&PZ\\
PX\ar@{}[ru]|{\geq}\ar[u]|-{\circ}^{(Pg)^*}\ar[r]|-{\circ}_{(Pf)_*}&PY.\ar[u]|-{\circ}_{(Ph)^*}}
\end{equation}
For each $\varphi\in PX$ and $\psi\in PZ$, we have
\begin{align*}
(Ph)^*(Pf)_*(\varphi,\psi)&= (Ph)^\circ\cdot\tb\cdot Pf(\varphi,\psi)\\
&=\tb(Pf(\varphi),Ph(\psi))\\
&= \bigwedge_{y\in Y}\hom(\varphi\cdot f^*(y),\psi\cdot h^*(y))\\
&\leq \displaystyle\bigwedge_{x\in X} \hom(\varphi\cdot f^*\cdot f_*(x),\psi\cdot h^*\cdot f_*(x))\\
&\leq \displaystyle\bigwedge_{x\in X}\hom(\varphi(x),\psi\cdot l_*\cdot g^*(x))&\mbox{($\varphi\leq \varphi\cdot f^*\cdot f_*$, \eqref{eq:BC} is BC*)}\\
&= \ta(\varphi,\psi\cdot\l_*\cdot g^*)\\
&\leq \ta(\varphi,\psi\cdot l_*\cdot g^*)\otimes \tc(\psi\cdot l_*\cdot l^*,\psi)&\mbox{(because $\psi\cdot l_*\cdot l^*\leq\psi$)}\\
&=\ta(\varphi,Pg(\psi\cdot l_*)\otimes\tc(Pl(\psi\cdot l_*),\psi)\\
&\leq \displaystyle\bigvee_{\gamma\in PW}\ta(\varphi,Pg(\gamma))\otimes\tc(Pl(\gamma),\psi)\\
&=(Pl)_*(Pg)^*(\varphi,\psi).
\end{align*}

(2) \emph{$\mu$ satisfies BC*}: For each $V$-functor $f\colon X\to Y$, from the naturality of $\yoneda$ it follows that the following diagram
\[\xymatrix{PPX\ar[r]|-{\circ}^-{(\yoneda_{PX})^*}&PX\\
PPY\ar[u]|-{\circ}^{(PPf)^*}\ar[r]|-{\circ}_-{(\yoneda_{PY})^*}&PY\ar[u]|-{\circ}_{(Pf)^*}}\]
commutes. Lax idempotency of $\PP$ means in particular that $\mult_X\dashv \yoneda_{PX}$, or, equivalently, $(\mult_X)_*=(\yoneda_{PX})^*$, and therefore the commutativity of this diagram shows BC* for $\mult$.

(3) \emph{$\yoneda$ satisfies BC*}: Once again, for each $V$-functor $f\colon(X,a)\to(Y,b)$, we want to show that the diagram
\[\xymatrix{X\ar[r]|-{\circ}^-{(\yoneda_X)_*}&PX\\
Y\ar[u]|-{\circ}^{f^*}\ar[r]|-{\circ}_-{(\yoneda_Y)_*}&PY\ar[u]|-{\circ}_{(Pf)^*}}\]
commutes. Let $y\in Y$ and $\varphi\colon X\dist E$ belong to $PX$. Then
\begin{align*}((Pf)^*(\yoneda_Y)_*)(y,\varphi)&=((Pf)^\circ\cdot \tb\cdot\yoneda_Y)(y,\varphi)=\tb(\yoneda_Y(y),Pf(\varphi))=Pf(\varphi)(y)=\bigvee_{x\in X}b(y,f(x))\otimes\varphi(x)\\
&=\bigvee_{x\in X}b(y,f(x))\otimes\ta(\yoneda_X(x),\varphi)=(\ta\cdot\yoneda_X\cdot f^\circ\cdot b)(y,\varphi)=(\yoneda_X)_*\cdot f^*(y,\varphi),\\
\end{align*}
as claimed.
\end{proof}

\begin{corollary}\label{cor:laxidpt}
Let  $\TT=(T,\mu,\eta)$ on $\VCat$ be a 2-monad on $\VCat$, and $\sigma\colon\TT\to\PP$ be a monad morphism, pointwise fully faithful. Then $\TT$ is lax idempotent.
\end{corollary}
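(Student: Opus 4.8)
The plan is to deduce lax idempotency of $\TT$ from lax idempotency of $\PP$, transported along the pointwise fully faithful monad morphism $\sigma$. By Proposition \ref{prop:laxidpt}(1), it suffices to show that for every $V$-category $X$ the square
\[\xymatrix{TX\ar[r]^-{T\eta_X}\ar[d]_{\eta_{TX}}&TTX\ar[d]^{\mu_X}\\
TTX\ar[r]_-{\mu_X}&TX}\]
is a BC*-square, i.e.\ that $(\mu_X)^*\cdot(\mu_X)_*\leq(T\eta_X)_*\cdot(\eta_{TX})^*$ in $\VDist$; the reverse inequality always holds since this square commutes. So I would first record that for $\PP$ the corresponding square is BC* (this is Proposition \ref{prop:laxidpt}(1) applied to $\PP$, which is lax idempotent by construction), giving $(\mult_X)^*\cdot(\mult_X)_*=(P\yoneda_X)_*\cdot(\yoneda_{PX})^*$.

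The key step is to relate the two squares via $\sigma$. Since $\sigma\colon\TT\to\PP$ is a monad morphism, one has $\sigma\circ\eta=\yoneda$ (restricted appropriately) and the compatibility $\mult_X\cdot\sigma_{TX}\cdot T\sigma_X=\sigma_X\cdot\mu_X$ from diagram \eqref{eq:monadmorphism}; moreover each $\sigma_X$ is fully faithful, so by the characterisation in Remarks \ref{rem:BC*}(1) the square with $\sigma_X$ on two sides and identities on the other two is BC*, which amounts to $(\sigma_X)^*\cdot(\sigma_X)_*=1_{TX}=(1_{TX})$, the $V$-categorical structure of $TX$. I would then pull the inequality $(\mult_X)^*\cdot(\mult_X)_*\leq(P\yoneda_X)_*\cdot(\yoneda_{PX})^*$ back along the various $\sigma$'s: precompose and postcompose with the appropriate $(\sigma)_*$ and $(\sigma)^*$ distributors, using $f_*=g^*$-type identities from the monad morphism equations and the adjunctions $\sigma_*\dashv\sigma^*$ together with $\sigma^*\cdot\sigma_*=\mathrm{id}$ (full faithfulness) to cancel factors. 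Concretely, I expect $(\mu_X)_*$ to be expressible as $(\sigma_X)^*\cdot(\mult_{X})_*\cdot(\text{something involving }\sigma)_*$ after using naturality of $\sigma$ and the monad morphism square, and similarly for the other three edges of the $\TT$-square, so that the $\PP$-inequality transports verbatim.

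The main obstacle will be the bookkeeping in step two: one must carefully express each of $(\mu_X)_*,(\mu_X)^*,(T\eta_X)_*,(\eta_{TX})^*$ in terms of the $\PP$-data conjugated by the embeddings $\sigma_X,\sigma_{TX},\sigma_{TTX}$, and check that the conjugating distributors appearing on the left of $(\mu_X)^*\cdot(\mu_X)_*$ and on the right of $(T\eta_X)_*\cdot(\eta_{TX})^*$ match up and cancel via full faithfulness, leaving exactly the $\PP$-BC*-inequality sandwiched in between. A clean way to organise this is to observe that $\sigma_X$ being an embedding makes $\TX$ (with its distributor) a ``full subcategory'' of $PX$, so that BC*-ness of a square built from restrictions of $P$-morphisms follows from BC*-ness of the ambient $P$-square by restricting distributors; one only needs that the edges $\mu_X, T\eta_X, \eta_{TX}$ are indeed (co)restrictions of $\mult_X, P\yoneda_X, \yoneda_{PX}$ along the $\sigma$'s, which is precisely the content of naturality of $\sigma$ plus the monad morphism axioms. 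Once this identification is made, the inequality $(\mu_X)^*\cdot(\mu_X)_*\leq(T\eta_X)_*\cdot(\eta_{TX})^*$ drops out, and Proposition \ref{prop:laxidpt}(1) finishes the proof.
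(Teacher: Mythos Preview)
Your plan is correct and uses the same ingredients as the paper: the monad-morphism equations for $\sigma$, pointwise full faithfulness of $\sigma$, and lax idempotency of $\PP$ in the form $(\mult_X)_*=(\yoneda_{PX})^*$. The paper's execution is slightly more direct: rather than verifying the BC*-square of Proposition~\ref{prop:laxidpt}(1), it shows the equivalent adjunction $\mu_X\dashv\eta_{TX}$ outright by computing $(\mu_X)_*=(\eta_{TX})^*$. Concretely, from $\sigma_X\cdot\mu_X=\mult_X\cdot P\sigma_X\cdot\sigma_{TX}$ and full faithfulness of $\sigma_X$ one gets $(\mu_X)_*=(\sigma_X)^*(\mult_X)_*(P\sigma_X)_*(\sigma_{TX})_*$; substituting $(\mult_X)_*=(\yoneda_{PX})^*$ and then using the unit triangle $\yoneda_{PX}=\sigma_{PX}\cdot\eta_{PX}$ together with naturality of $\eta$ and of $\sigma$ rewrites this as $(\eta_{TX})^*(\sigma_{TX})^*(P\sigma_X)^*(P\sigma_X)_*(\sigma_{TX})_*$, which collapses to $(\eta_{TX})^*$ by full faithfulness of $\sigma_{TX}$ and of $P\sigma_X$. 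This is exactly the cancellation you anticipate, and your BC*-square then follows a fortiori.

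Two small points to tighten. First, the last cancellation needs $P\sigma_X$ fully faithful, i.e.\ that $P$ preserves fully faithful $V$-functors; the paper invokes this explicitly as a consequence of $P$ satisfying BC* (cf.\ Remarks~\ref{rem:BC*}(1)), and you should do the same. Second, your ``clean way'' in the final paragraph---that BC*-ness of a square restricts along full embeddings---is not a general principle: after conjugating, a factor $(\sigma_X)_*(\sigma_X)^*\geq 1_{PX}$ appears in the middle of $(\mu_X)^*(\mu_X)_*$ and points the wrong way for the desired inequality. It only disappears here because you can first replace $(\mult_X)^*$ and $(\mult_X)_*$ by $(P\yoneda_X)_*$ and $(\yoneda_{PX})^*$ and then use $P\yoneda_X\cdot\sigma_X=(P\sigma_X\cdot\sigma_{TX})\cdot T\eta_X$ and $\yoneda_{PX}\cdot\sigma_X=(P\sigma_X\cdot\sigma_{TX})\cdot\eta_{TX}$ to absorb it; so rely on the explicit bookkeeping rather than on restriction as a black box.
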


\begin{proof}
We know that $\PP$ is lax idempotent, and so, for every $V$-category $X$, $(\mult_X)_*=(\yoneda_{PX})^*$.
Consider diagram \eqref{eq:monadmorphism}. The commutativity of the diagram on the right gives that $(\mu_X)_*=(\sigma_X)^*(\sigma_X)_*(\mu_X)_*=(\sigma_X)^*(\mult_X)_*(P\sigma_X)_*(\sigma_{TX})_*$; using the equality above, and preservation of fully faithful $V$-functors by $\PP$ -- which follows from BC* -- we obtain:
\begin{align*}
(\mu_X)_*&=(\sigma_X)^*(\yoneda_{PX})^*(P\sigma_X)_*(\sigma_{TX})_*=(\sigma_X)^*(\eta_{PX})^*(\sigma_{PX})^*(P\sigma_X)_*(\sigma_{TX})_*
=\\
&=(\eta_{TX})^*\cdot (\sigma_{TX})^*(P\sigma_X)^*(P\sigma_X)_*(\sigma_{TX})_*=(\eta_{TX})^*.\end{align*}
\end{proof}

\section{Presheaf submonads and Beck-Chevalley conditions}

In this section, for a general 2-monad $\TT=(T,\mu,\eta)$ on $\VCat$, we relate its BC* properties with the existence of a (sub)monad morphism $\TT\to\PP$. We remark that a necessary condition for $\TT$ to be a submonad of $\PP$ is that $TX$ is separated for every $V$-category $X$, since $PX$ is separated and separated $V$-categories are stable under monomorphisms.

\begin{theorem}\label{th:submonad}
For a 2-monad $\TT=(T,\mu,\eta)$ on $\VCat$ with $TX$ separated for every $V$-category $X$, the following assertions are equivalent:
\begin{tfae}
\item $\TT$ is a submonad of $\PP$.
\item $\TT$ is lax idempotent and satisfies BC*, and both $\eta_X$ and $Q\eta_X\cdot\yoneda_{TX}$ are fully faithful, for each $V$-category $X$.
\item $\TT$ is lax idempotent, $\mu$ and $\eta$ satisfy BC*, and both $\eta_X$ and $Q\eta_X\cdot\yoneda_{TX}$ are fully faithful, for each $V$-category $X$.
\item $\TT$ is lax idempotent, $\eta$ satisfies BC*, and both $\eta_X$ and $Q\eta_X\cdot\yoneda_{TX}$ are fully faithful, for each $V$-category $X$.
\end{tfae}
\end{theorem}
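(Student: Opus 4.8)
The plan is to run the cycle \mbox{(i)$\Rightarrow$(ii)$\Rightarrow$(iii)$\Rightarrow$(iv)$\Rightarrow$(i)}. The two middle implications carry no content: by Proposition~\ref{prop:laxidpt}(2) a lax idempotent $\TT$ automatically has $\mu$ satisfying BC*, so passing from (ii) to (iii) merely drops the clause on $T$ and passing from (iii) to (iv) drops the now-redundant clause on $\mu$. Thus the work is in \mbox{(iv)$\Rightarrow$(i)} and \mbox{(i)$\Rightarrow$(ii)}.

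For \mbox{(iv)$\Rightarrow$(i)} I would take as candidate monad morphism
\[\sigma_X:=Q\eta_X\cdot\yoneda_{TX}\colon TX\longrightarrow PX,\qquad \sigma_X(\mathfrak{x})=\mathfrak{x}^*\cdot(\eta_X)_*=TX(\eta_X(-),\mathfrak{x}),\]
which is a $V$-functor, being a composite of two $V$-functors. Lax idempotency yields $(T\eta_X)_*=(\mu_X)^*$ and $\mult_X=Q\yoneda_X$, which I would use throughout. The naturality square of $\eta$ at $f$ always satisfies $(\eta_X)_*\cdot f^*\le(Tf)^*\cdot(\eta_Y)_*$, and the hypothesis that $\eta$ satisfies BC* supplies the reverse inequality; combined with $(Tf\,\mathfrak{x})^*=\mathfrak{x}^*\cdot(Tf)^*$ (naturality of $\yoneda$) this gives naturality $Pf\cdot\sigma_X=\sigma_Y\cdot Tf$. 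The unit law in \eqref{eq:monadmorphism}, $\sigma_X\cdot\eta_X=\yoneda_X$, unwinds as an equality of distributors to $TX(\eta_X(x),\eta_X(x'))=X(x',x)$, i.e.\ to full faithfulness of $\eta_X$. For the multiplication law, full faithfulness of $\sigma_X=Q\eta_X\cdot\yoneda_{TX}$ gives $\sigma_X^*\cdot(\sigma_X)_*=1_{TX}$, hence $\sigma_X^*\cdot(\yoneda_X)_*=(\eta_X)_*$; using this, the identity $\eta_{TX}\cdot\eta_X=T\eta_X\cdot\eta_X$, the equation $(T\eta_X)_*=(\mu_X)^*$, and contravariance of $(-)^*$, the composite $\mult_X\cdot P\sigma_X\cdot\sigma_{TX}$ evaluated at $\mathfrak{X}\in TTX$ becomes $(\mu_X(\mathfrak{X}))^*\cdot(\eta_X)_*=\sigma_X(\mu_X(\mathfrak{X}))$. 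Finally $\sigma_X$ is fully faithful by hypothesis and, $TX$ being separated, also injective on objects, so $\sigma$ exhibits $\TT$ as a submonad of $\PP$.

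For \mbox{(i)$\Rightarrow$(ii)}: lax idempotency is Corollary~\ref{cor:laxidpt}, whence $\mu$ satisfies BC* by Proposition~\ref{prop:laxidpt}(2). Writing $\sigma$ for the given inclusion monad morphism, $\sigma_X=\mult_X\cdot P\sigma_X\cdot\sigma_{TX}\cdot\eta_{TX}=\mult_X\cdot P\sigma_X\cdot\yoneda_{TX}$, and evaluating with $\sigma_X^*\cdot(\yoneda_X)_*=(\eta_X)_*$ identifies $\sigma_X$ with $Q\eta_X\cdot\yoneda_{TX}$; hence $Q\eta_X\cdot\yoneda_{TX}$ is fully faithful, and $\eta_X$ is fully faithful since $\sigma_X\cdot\eta_X=\yoneda_X$ with $\sigma_X$ and $\yoneda_X$ fully faithful. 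That $\eta$ satisfies BC* I would get from the BC* property of $\yoneda$: from the always-valid inequality $(\sigma_X)_*\cdot(Tf)^*\le(Pf)^*\cdot(\sigma_Y)_*$ for the naturality square of $\sigma$ at $f$, together with $\sigma_Y\cdot\eta_Y=\yoneda_Y$, one gets $(\sigma_X)_*\cdot(Tf)^*\cdot(\eta_Y)_*\le(Pf)^*\cdot(\yoneda_Y)_*\le(\yoneda_X)_*\cdot f^*=(\sigma_X)_*\cdot(\eta_X)_*\cdot f^*$, and cancelling $(\sigma_X)_*$ on the left via $\sigma_X^*\cdot(\sigma_X)_*=1_{TX}$ gives $(Tf)^*\cdot(\eta_Y)_*\le(\eta_X)_*\cdot f^*$.

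The one remaining assertion, that $T$ itself satisfies BC*, is where I expect the real difficulty to lie. Transporting BC* from $P$ to $T$ along $\sigma$ naively fails, since full faithfulness only provides $\sigma_X^*\cdot(\sigma_X)_*=1_{TX}$ and not $(\sigma_X)_*\cdot\sigma_X^*=1_{PX}$, and the witness $\psi\cdot l_*$ used to prove that $P$ satisfies BC* need not stay in $\Phi W$. I would instead pass to the admissible class: by Theorem~\ref{th:Phi} one may take $TX=\Phi X$ with $\Phi$ as in \eqref{eq:fai}, and re-run the computation proving that $P$ satisfies BC* with all suprema confined to the sub-$V$-categories $\Phi(-)$, using the admissibility conditions — notably (4), together with (1) and (2) — to exhibit, for a $\Phi$-distributor in place of $\psi$, a witness lying in $\Phi W$ realising the same lower bound. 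Checking that the closure operations built into admissibility are precisely strong enough to keep this witness inside $\Phi$ is the crux of the argument.
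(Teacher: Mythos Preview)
Your overall architecture matches the paper's exactly: the same cycle, the same candidate $\sigma_X=Q\eta_X\cdot\yoneda_{TX}$ in (iv)$\Rightarrow$(i), the same use of Corollary~\ref{cor:laxidpt} and Proposition~\ref{prop:laxidpt}(2), and essentially the same cancellation-along-$\sigma$ argument for BC* of $\eta$ (the paper phrases it as the general lemma ``if the composite square is BC* and the right square has fully faithful horizontals, then the left square is BC*''). Your explicit identification $\sigma_X=Q\eta_X\cdot\yoneda_{TX}$ in (i)$\Rightarrow$(ii) is a point the paper leaves implicit.

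The only substantive divergence is your treatment of BC* for $T$. The paper dispatches this in one line: ``BC* for $T$ follows directly from the fact that $\Phi f$ is a (co)restriction of $Pf$, for every $V$-functor $f$.'' You, by contrast, flag this as the crux and propose to re-run the $P$-computation inside $\Phi$, invoking admissibility condition~(4). That invocation is misdirected: condition~(4) concerns distributors $\gamma\colon PX\dist E$ and their restriction to $\Phi X$, and says nothing about whether the specific witness $\psi\cdot l_*$ (for $\psi\in\Phi Z$) lands in $\Phi W$; none of (1), (2), (4) gives closure under post-composition with $l_*$. So your proposed route, as sketched, does not close the gap you correctly perceive. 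The paper's terse argument really rests on the observation that, with $\sigma$ a full embedding and $Tf=Pf|_{TX}$, one has $(Th)^*(Tf)_*(\varphi,\psi)=PY(Pf(\varphi),Ph(\psi))$ on the nose for $\varphi\in TX$, $\psi\in TZ$, after which the $P$-proof is meant to carry over verbatim; you should compare with that rather than reaching for condition~(4).
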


\begin{proof}
(i) $\Rightarrow$ (ii): By (i) there exists a monad morphism $\sigma\colon \TT\to \PP$ with $\sigma_X$ an embedding for every $V$-category $X$.
By Corollary \ref{cor:laxidpt}, with $\PP$, also $\TT$ is lax idempotent.
Moreover, from $\sigma_X\cdot \eta_X=\yoneda_X$, with $\yoneda_X$, also $\eta_X$ is fully faithful. (In fact this is valid for any monad with a monad morphism into $\PP$.)

To show that $\TT$ satisfies BC* we use the characterisation of Theorem \ref{th:Phi}; that is, we know that there is an admissible class $\Phi$ of distributors so that $\TT=\Phi$. Then BC* for $T$ follows directly from the fact that $\Phi f$ is a (co)restriction of $Pf$, for every $V$-functor $f$.

BC* for $\eta$ follows from BC* for $\yoneda$ and full faithfulness of $\sigma$ since, for any commutative diagram in $\VCat$
\[\xymatrix{\cdot\ar[r]\ar[d]&\cdot\ar[r]^f\ar[d]&\cdot\ar[d]\\
\cdot\ar@{}[ru]|{\fbox{1}}\ar[r]&\ar@{}[ru]|{\fbox{2}}\cdot\ar[r]_g&\cdot}\]
with $\fbox{1}\fbox{2}$ satisfying BC*, and $f$ and $g$ fully faithful, also $\fbox{1}$ satisfies BC*.

Thanks to Proposition \ref{prop:laxidpt}, BC* for $\mu$ follows directly from lax idempotency of $\TT$.\\

The implications (ii) $\Rightarrow$ (iii) $\Rightarrow$ (iv) are obvious.\\

(iv) $\Rightarrow$ (i): For each $V$-category $(X,a)$, we denote by $\ha$ the $V$-category structure on $TX$, and define
the $V$-functor $(\xymatrix{TX\ar[r]^-{\sigma_X}&PX})=(\xymatrix{TX\ar[r]^-{\yoneda_{TX}}&PTX\ar[r]^-{Q\eta_X}&PX})$;
that is, $\sigma_X(\fx)=(\xymatrix{X\ar[r]^{\eta_X}&TX\ar[r]|-{\object@{|}}^{\ha}&TX\ar[r]|-{\object@{|}}^-{\fx^\circ}&E})=\ha(\eta_X(\;),\fx)$. As a composite of fully faithful $V$-functors, $\sigma_X$ is fully faithful; moreover, it is an embedding because, by hypothesis, $TX$ and $PX$ are separated $V$-categories.\\

To show that \emph{$\sigma=(\sigma_X)_X\colon T\to P$ is a natural transformation}, that is, for each $V$-functor $f\colon X\to Y$,
the outer diagram
\[\xymatrix{TX\ar[r]^{\yoneda_{TX}}\ar[d]_{Tf}&PTX\ar[r]^{Q\eta_X}\ar[d]|{PTf}&PX\ar[d]^{Pf}\\
TY\ar@{}[ru]|{\fbox{1}}\ar[r]_{\yoneda_{TY}}&PTY\ar@{}[ru]|{\fbox{2}}\ar[r]_{Q\eta_Y}&PY}\]
commutes, we only need to observe that $\fbox{1}$ is commutative and BC* for $\eta$ implies that $\fbox{2}$ is commutative.\\

It remains to show \emph{$\sigma$ is a monad morphism}: for each $V$-category $(X,a)$ and $x\in X$, \[(\sigma_X\cdot\eta_X)(x)=\ha(\eta_X(\;),\eta_X(x))=a(-,x)=x^*=\yoneda_X(x),\] and so $\sigma\cdot \eta=\yoneda$.
To check that, for every $V$-category $(X,a)$, the following diagram commutes
\[\xymatrix{TTX\ar[r]^{\sigma_{TX}}\ar[d]_\mu&PTX\ar[r]^{P\sigma_X}&PPX\ar[d]^{\mult_X}\\
TX\ar[rr]_{\sigma_X}&&PX,}\]
let $\fX\in TTX$. We have
\begin{align*}
\mult_X\cdot P\sigma_X\cdot \sigma_{TX}(\fX)&=(\xymatrix{X\ar[r]^{\yoneda_X}&PX\ar[r]|-{\object@{|}}^{\ta}&PX\ar[r]|-{\object@{|}}^{\sigma_X^\circ}&
TX\ar[r]^{\eta_{TX}}&TTX\ar[r]|-{\object@{|}}^{\hha}&TTX\ar[r]|-{\object@{|}}^-{\fX^\circ}&E})\\
&=(\xymatrix{X\ar[r]^{\eta_X}&TX\ar[r]|-{\object@{|}}^{\ha}&TX\ar[r]^{\eta_{TX}}&TTX\ar[r]|-{\object@{|}}^{\hha}&TTX\ar[r]|-{\object@{|}}^-{\fX^\circ}&E}),
\end{align*}
since $\sigma_X^\circ\cdot\ta\cdot\yoneda_X(x,\fx)=\ta(\yoneda_X(x),\sigma_X(\fx))=\sigma_X(\fx)(x)=\ha\cdot\eta_X(x,\fx)$, and
\[\sigma_X\cdot \mu_X(\fx)=(\xymatrix{X\ar[r]^{\eta_X}&TX\ar[r]|-{\object@{|}}^{\ha}&TX\ar[r]|-{\object@{|}}^{\mu_X^\circ}&TTX\ar[r]|-{\object@{|}}^-{\fX^\circ}&E}).\]
Hence the commutativity of the diagram follows from the equality $\hha\cdot \eta_{TX}\cdot\ha\cdot\eta_X=\mu_X^\circ\cdot \ha\cdot\eta_X$ we show next. Indeed,
\[\hha\cdot\eta_{TX}\cdot\ha\cdot\eta_X=(\eta_{TX})_*(\eta_X)_*=(\eta_{TX}\cdot\eta_X)_*=(T\eta_X\cdot\eta_X)_*=(T\eta_X)_*(\eta_X)_*=
\mu_X^* (\eta_X)_*=\mu_X^\circ\cdot\ha\cdot\eta_X.\]
\end{proof}

The proof of the theorem allows us to conclude immediately the following result.
\begin{corollary}\label{cor:morphism}
Given a 2-monad $\TT=(T,\mu,\eta)$ on $\VCat$ such that $\eta$ satisfies BC*, there is a monad morphism $\TT\to\PP$ if, and only if, $\eta$ is pointwise fully faithful.
\end{corollary}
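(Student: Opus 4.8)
The plan is to extract the argument directly from the proof of Theorem~\ref{th:submonad}, since Corollary~\ref{cor:morphism} is essentially a ``lighter'' version of the equivalence (iv)~$\Leftrightarrow$~(i) there, with the embedding/separatedness requirements stripped away. One direction is cheap: if there is a monad morphism $\sigma\colon\TT\to\PP$, then from $\sigma_X\cdot\eta_X=\yoneda_X$ and the fact that $\yoneda_X$ is fully faithful, a standard cancellation argument (if a composite $g\cdot f$ is fully faithful then $f$ is fully faithful --- immediate from $f^*\cdot f_*\geq f^*\cdot g^*\cdot g_*\cdot f_*=(g\cdot f)^*\cdot(g\cdot f)_*=a$, so actually $=a$) gives that $\eta_X$ is fully faithful. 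I would note this needs no hypothesis on $\eta$ at all, and in fact it already appears in the proof of (i)~$\Rightarrow$~(ii) above.

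For the converse, assume $\eta$ satisfies BC* and $\eta_X$ is pointwise fully faithful. First I would invoke Corollary~\ref{cor:laxidpt}: wait --- that corollary presupposes a monad morphism into $\PP$, which is what we are trying to build, so instead I would argue lax idempotency is not needed as an input here. Re-reading the construction in (iv)~$\Rightarrow$~(i): the definition $\sigma_X:=Q\eta_X\cdot\yoneda_{TX}\colon TX\to PX$ makes sense for any $\TT$; naturality of $\sigma$ follows from naturality of $\yoneda$ (square $\fbox{1}$) together with BC* for $\eta$ (square $\fbox{2}$), exactly as written, using none of the separatedness or lax-idempotency hypotheses; and the monad-morphism identities $\sigma\cdot\eta=\yoneda$ and $\mult\cdot P\sigma\cdot\sigma_T=\sigma\cdot\mu$ are verified there by the pointwise computations culminating in the identity $\hha\cdot\eta_{TX}\cdot\ha\cdot\eta_X=\mu_X^\circ\cdot\ha\cdot\eta_X$, whose proof chain $(\eta_{TX}\cdot\eta_X)_*=(T\eta_X\cdot\eta_X)_*=\mu_X^*(\eta_X)_*$ uses only the monad axioms and the definition of lax idempotency's... no, in fact it uses $\mu_X\cdot T\eta_X=\id$, a plain monad axiom. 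So the whole construction goes through verbatim. The point of the corollary is simply to record that \emph{dropping} the separatedness/embedding conclusions, the only hypotheses actually consumed in (iv)~$\Rightarrow$~(i) for building the \emph{monad morphism} (as opposed to an embedding) are: BC* for $\eta$, and full faithfulness of $\eta_X$.

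Concretely I would write: ``\emph{Proof.} If $\sigma\colon\TT\to\PP$ is a monad morphism, then $\sigma_X\cdot\eta_X=\yoneda_X$ is fully faithful, hence so is $\eta_X$, for every $X$. Conversely, assume $\eta$ satisfies BC* and each $\eta_X$ is fully faithful. Define $\sigma_X\colon TX\to PX$ as the composite $Q\eta_X\cdot\yoneda_{TX}$. As in the proof of Theorem~\ref{th:submonad}, (iv)~$\Rightarrow$~(i), naturality of $\sigma$ follows from naturality of $\yoneda$ and BC* for $\eta$, and the identities $\sigma\cdot\eta=\yoneda$ and $\mult\cdot P\sigma\cdot\sigma_T=\sigma\cdot\mu$ are checked exactly as there; note that neither separatedness of $TX$ nor lax idempotency of $\TT$ intervene in that part of the argument. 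Hence $\sigma$ is a monad morphism. $\Box$''

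The main obstacle --- really the only thing requiring care --- is to double-check that the chain of equalities establishing $\sigma\cdot\mu=\mult\cdot P\sigma\cdot\sigma_T$ in Theorem~\ref{th:submonad} genuinely does not secretly use full faithfulness of $Q\eta_X\cdot\yoneda_{TX}$ or separatedness; inspecting it, full faithfulness of $\sigma_X$ was used only to conclude $\sigma_X$ is an \emph{embedding}, which is irrelevant for the corollary, and the separatedness of $TX$ and $PX$ was used only for the same purpose, so the reduction is clean and the corollary follows ``immediately'', as claimed.
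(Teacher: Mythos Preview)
Your overall approach --- extracting the monad-morphism construction from (iv)$\Rightarrow$(i) in the proof of Theorem~\ref{th:submonad} and checking which hypotheses are actually consumed --- is exactly what the paper intends. The forward direction and your treatment of naturality and of the unit law $\sigma\cdot\eta=\yoneda$ are fine.

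There is, however, a genuine gap in your handling of the multiplication law. You assert that the equality $(T\eta_X)_*(\eta_X)_*=\mu_X^*(\eta_X)_*$ ``uses $\mu_X\cdot T\eta_X=\id$, a plain monad axiom''. But from $\mu_X\cdot T\eta_X=1_{TX}$ one obtains $(T\eta_X)^*\cdot\mu_X^*=\ha$, whence only the single inequality
\[(T\eta_X)_*\;=\;(T\eta_X)_*\cdot(T\eta_X)^*\cdot\mu_X^*\;\leq\;\mu_X^*.\]
The reverse inequality $\mu_X^*\leq(T\eta_X)_*$ is, by Remark~\ref{rem:adjcond}, precisely the statement $T\eta_X\dashv\mu_X$, i.e.\ lax idempotency of $\TT$. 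So the chain in the paper's proof does invoke the lax-idempotency hypothesis of (iv) at exactly this point, and your self-correction (``no, in fact it uses $\mu_X\cdot T\eta_X=\id$'') is mistaken. Your ``main obstacle'' paragraph checks against the wrong hypotheses: separatedness and full faithfulness of $Q\eta_X\cdot\yoneda_{TX}$ are indeed not used for the monad-morphism identities, but lax idempotency is. Since the corollary does not assume lax idempotency, the missing inequality $\mu_X^*\cdot(\eta_X)_*\leq(\eta_{TX}\cdot\eta_X)_*$ would require a separate argument from BC* for $\eta$ and pointwise full faithfulness of $\eta$ alone, and your proposal does not supply one.
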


\section{On algebras for submonads of $\PP$: a survey}

In the remainder of this paper we will study, given a submonad $\TT$ of $\PP$, the category $(\VCat)^\TT$ of (Eilenberg-Moore) $\TT$-algebras. Here we collect some known results which will be useful in the following sections. We will denote by $\Phi(\TT)$ the admissible class of distributors that induces the monad $\TT$ (defined in \eqref{eq:fai}).

The following result, which is valid for any lax-idempotent monad $\TT$, asserts that, for any $V$-category, to be a $\TT$-algebra is a property (see, for instance, \cite{EF99} and \cite{CLF20}).

\begin{theorem}\label{th:KZ}
Let $\TT$ be lax idempotent monad on $\VCat$.
\begin{enumerate}
\item For a $V$-category $X$, the following assertions are equivalent:
\begin{tfae}
\item[\em (i)] $\alpha\colon TX\to X$ is a $\TT$-algebra structure on $X$;
\item[\em (ii)] there is a $V$-functor $\alpha\colon TX\to X$ such that $\alpha\dashv\eta_X$ with $\alpha\cdot\eta_X=1_X$;
\item[\em (iii)] there is a $V$-functor $\alpha\colon TX\to X$ such that $\alpha\cdot\eta_X=1_X$;
\item[\em (iv)] $\alpha\colon TX\to X$ is a split epimorphism in $\VCat$.
\end{tfae}
\item If $(X,\alpha)$ and $(Y,\beta)$ are $\TT$-algebra structures, then every $V$-functor $f\colon X\to Y$ satisfies $\beta\cdot Tf\leq f\cdot\alpha$.
\end{enumerate}
\end{theorem}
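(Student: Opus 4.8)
The plan is to prove the two parts in order, using the characterisation of lax idempotency available in the excerpt. For part (1), the core is the chain of implications (i) $\Rightarrow$ (ii) $\Rightarrow$ (iii) $\Rightarrow$ (iv) $\Rightarrow$ (i) (or a similar cycle). The implications (i) $\Rightarrow$ (iii), (iii) $\Rightarrow$ (iv), and (ii) $\Rightarrow$ (iii) are essentially formal: an algebra structure $\alpha$ satisfies $\alpha\cdot\eta_X=1_X$ by the unit law, and any $V$-functor with a section is a split epimorphism. The substantive implication is (iii) $\Rightarrow$ (ii), i.e. upgrading ``$\alpha\cdot\eta_X=1_X$'' to ``$\alpha\dashv\eta_X$ with $\alpha\cdot\eta_X=1_X$''. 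Here I would use lax idempotency in the form $T\eta_X\dashv\mu_X$ (equivalently $\mu_X\dashv\eta_{TX}$): given $\alpha$ with $\alpha\cdot\eta_X=1_X$, one shows $\eta_X\cdot\alpha\leq 1_{TX}$, so that $\alpha\dashv\eta_X$. This is the standard Kock--Zöberlein argument: apply $T$ to $\alpha\cdot\eta_X=1_X$ to get $T\alpha\cdot T\eta_X=1_{TX}$; combine with $T\eta_X\dashv\mu_X$ (which gives $T\eta_X\cdot\mu_X\leq 1_{TTX}$ after using $\mu_X\cdot T\eta_X=1_{TX}$) and naturality of $\eta$ to deduce $\eta_X\cdot\alpha=\eta_X\cdot\alpha\cdot\mu_X\cdot T\eta_X\leq\ldots\leq 1_{TX}$; one also checks $\alpha\cdot\eta_X=1_X\geq 1_X$ trivially, so both unit inequalities of the adjunction hold. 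The remaining implication (iv) $\Rightarrow$ (iii) is immediate: a splitting of $\alpha$ need not be $\eta_X$ a priori, but one checks that any section $s$ of $\alpha$ satisfies $s\simeq\eta_X$ using lax idempotency, hence $\alpha\cdot\eta_X\simeq\alpha\cdot s=1_X$; since $X$ need not be separated one argues directly that $\alpha\cdot\eta_X=1_X$ by post-composing with the fully faithful $\eta_X$ — more cleanly, replace (iv)$\Rightarrow$(iii) by (iv)$\Rightarrow$(ii) via the observation that a left adjoint section is forced.

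For part (2), given algebras $(X,\alpha)$ and $(Y,\beta)$ and a $V$-functor $f\colon X\to Y$, I want $\beta\cdot Tf\leq f\cdot\alpha$. The plan is to use $\alpha\dashv\eta_X$ from part (1)(ii): since $\alpha$ is left adjoint to $\eta_X$, inequalities with domain $TX$ can be transposed. Precisely, $\beta\cdot Tf\leq f\cdot\alpha$ is equivalent (by transposing along $\alpha\dashv\eta_X$, using that $\alpha$ is a left adjoint and the 2-categorical structure of $\VCat$) to $\beta\cdot Tf\cdot\eta_X\leq f$. By naturality of $\eta$, $Tf\cdot\eta_X=\eta_Y\cdot f$, so the right-hand side becomes $\beta\cdot\eta_Y\cdot f=1_Y\cdot f=f$, giving equality, hence certainly $\leq$. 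So the inequality reduces to the naturality square plus the unit law, once the adjunction transposition is set up correctly.

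The main obstacle I anticipate is the careful bookkeeping of \emph{which} 2-cell inequalities are available and in which direction, given that the order on $\VCat$ is not antisymmetric and $X$ need not be separated — in particular making the transposition step in part (2) rigorous, i.e. justifying that $g\leq h$ implies $\alpha\cdot g\leq\alpha\cdot h$ (whiskering, which holds since $\VCat$ is a 2-category) and that $\beta\cdot Tf\leq f\cdot\alpha\Leftrightarrow\beta\cdot Tf\cdot\eta_X\leq f\cdot\alpha\cdot\eta_X=f$ uses $\alpha\dashv\eta_X$ correctly (one direction is whiskering by $\eta_X$; the converse is whiskering by $\alpha$ together with the counit $\alpha\cdot\eta_X=1_X$ and $\eta_X\cdot\alpha\leq 1_{TX}$). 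The lax-idempotency input should be quoted in the form established just above (either $T\eta_X\dashv\mu_X$ or directly the Kock–Zöberlein property ``$\alpha\cdot\eta_X=1_X\Rightarrow\alpha\dashv\eta_X$'') so that part (1) does not get bogged down; with that in hand both parts are short.
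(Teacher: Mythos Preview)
The paper does not supply its own proof of this theorem; it is stated as a known fact about lax idempotent monads, with references to \cite{EF99} and \cite{CLF20}. Your plan follows the standard Kock--Z\"{o}berlein reasoning and is largely sound, but two points deserve attention.

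In (iii)$\Rightarrow$(ii), your target inequality is reversed: for $\alpha\dashv\eta_X$ one needs the unit $1_{TX}\le\eta_X\cdot\alpha$, not $\eta_X\cdot\alpha\le 1_{TX}$. A clean derivation: from $\mu_X\dashv\eta_{TX}$ and $\mu_X\cdot T\eta_X=1_{TX}$ one obtains $T\eta_X\le\eta_{TX}$; then, using naturality of $\eta$ and $T(\alpha\cdot\eta_X)=1_{TX}$,
\[1_{TX}=T\alpha\cdot T\eta_X\le T\alpha\cdot\eta_{TX}=\eta_X\cdot\alpha.\]
You already flag direction bookkeeping as the main hazard, so this is easily corrected.

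More seriously, your handling of (iv) has a genuine gap. It is not true that any section $s$ of $\alpha$ satisfies $s\simeq\eta_X$, and in fact (iv)$\Rightarrow$(iii) fails for a \emph{fixed} $\alpha$. Take $V=\two$ and $X=\{0\le1\}$, so that $PX=\{\emptyset\subseteq\{0\}\subseteq\{0,1\}\}$; the monotone map $\alpha'$ sending $\emptyset\mapsto 0$ and $\{0\},\{0,1\}\mapsto 1$ is split by $s(0)=\emptyset$, $s(1)=\{0,1\}$, yet $\alpha'\cdot\yoneda_X(0)=1\neq 0$. The four conditions should therefore be read existentially over $\alpha$ (as the paper's lead-in ``to be a $\TT$-algebra is a property'' indicates); then (iv)$\Rightarrow$(i) amounts to the fact that a retract of the free algebra $TX$ carries an algebra structure, though not necessarily the original $\alpha$. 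Your sketch needs to be rewritten along those lines.

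Your argument for part (2) via the transpose across $\alpha\dashv\eta_X$ is correct.
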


Next we formulate characterisations of $\TT$-algebras that can be found in \cite{Ho11, CH08}, using \emph{injectivity} with respect to certain \emph{embeddings}, and using the existence of certain \emph{weighted colimits}, notions that we recall very briefly in the sequel.

\begin{definition}\cite{Es98}
A $V$-functor $f\colon X\to Y$ is a \emph{$T$-embedding} if $Tf$ is a left adjoint right inverse; that is, there exists a $V$-functor $Tf_\sharp$ such that $Tf\dashv Tf_\sharp$ and $Tf_\sharp\cdot Tf=1_{TX}$.
\end{definition}

For each submonad $\TT$ of $\PP$, the class $\Phi(\TT)$ allows us to identify easily the $T$-embeddings.

\begin{prop}\label{prop:emb}
For a $V$-functor $h\colon X\to Y$, the following assertions are equivalent:
\begin{tfae}
\item $h$ is a $T$-embedding;
\item $h$ is fully faithful and $h_*$ belongs to $\Phi(\TT)$.
\end{tfae}
In particular, $P$-embeddings are exactly the fully faithful $V$-functors.
\end{prop}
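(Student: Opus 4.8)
The plan is to prove the equivalence (i) $\Leftrightarrow$ (ii) for $T$-embeddings, and then derive the statement about $P$-embeddings as a special case. I would work throughout with the admissible class $\Phi=\Phi(\TT)$ and identify $\TT$ with the submonad it induces, so that $TX=\Phi X\subseteq PX$ and $Th=\Phi h$ is the (co)restriction of $Ph=(\;)\cdot h^*$.

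\textbf{Proof of (i) $\Rightarrow$ (ii).} Suppose $h\colon X\to Y$ is a $T$-embedding, so there is $Th_\sharp$ with $Th\dashv Th_\sharp$ and $Th_\sharp\cdot Th=1_{TX}$. First, since $\yoneda_X=\sigma_X\cdot\eta_X$ factors through $\sigma_X$ which is an embedding, and $\eta_X$ is fully faithful (by Theorem~\ref{th:submonad}, or directly since $\sigma_X\cdot\eta_X=\yoneda_X$ is fully faithful and $\sigma_X$ is fully faithful), the naturality square of $\eta$ on $h$ together with the full faithfulness of $\eta_X$ and $\eta_Y$ reduces full faithfulness of $h$ to that of $Th$; and $Th$, having a right inverse and being part of an adjunction with $1_{TX}=Th_\sharp\cdot Th$, is fully faithful (a left adjoint right inverse is always fully faithful, as $(Th)^*\cdot(Th)_*=(Th_\sharp\cdot Th)_*=(1_{TX})_*$). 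Hence $h$ is fully faithful. To see that $h_*\in\Phi$, by condition (3) of admissibility it suffices to check $y^*\cdot h_*\in TX$ for every $y\in Y$. Now $y^*\cdot h_*$ is precisely $Th(y^*)$ evaluated appropriately — more precisely $h_*$ as an element of $P$ of the relevant category; I would instead argue directly that $h_*\in\Phi$ iff $h^*$ is the right adjoint distributor and use that $Ph$ corestricts to $TX$ exactly on those $\varphi$ with $\varphi\cdot h^*\in\Phi$. The cleanest route: $Th$ being a left adjoint right inverse means in particular that $Th$ has a right adjoint $V$-functor, and by the lax-idempotency / explicit description $\mu$-adjunction, right adjointness of $Th$ in $\VCat$ at the level of presheaves corresponds to $h_*$ lying in $\Phi$. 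I would spell this out by noting $Th_\sharp(\psi)=\psi\cdot h_*$ must land in $\Phi X$ for every $\psi\in\Phi Y$; applying this to $\psi=y^*$ (which lies in $\Phi Y$ by condition (1) applied to the $V$-functor $y\colon E\to Y$, giving $y^*\in\Phi$) yields $y^*\cdot h_*\in\Phi$, hence $h_*\in\Phi$ by (3).

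\textbf{Proof of (ii) $\Rightarrow$ (i).} Suppose $h$ is fully faithful and $h_*\in\Phi$. Define $Th_\sharp\colon TY\to TX$ by $Th_\sharp(\psi)=\psi\cdot h_*$; this is well-defined since for $\psi\in\Phi Y$, $\psi\cdot h_*\in\Phi$ by condition (2) of admissibility (here $h_*\colon X\dist Y$ is in $\Phi$, and $\psi$ composes on the left). It is a $V$-functor as a (co)restriction of $(\;)\cdot h_*=Qh$. The adjunction $h_*\dashv h^*$ in $\VDist$ gives $(\;)\cdot h^*\dashv(\;)\cdot h_*$ on presheaves, i.e. $Ph\dashv Qh$, which corestricts to $Th\dashv Th_\sharp$. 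Finally $Th_\sharp\cdot Th(\varphi)=\varphi\cdot h^*\cdot h_*=\varphi$ for $\varphi\in\Phi X$, using $h^*\cdot h_*=a$ (full faithfulness of $h$). Thus $Th$ is a left adjoint right inverse, so $h$ is a $T$-embedding.

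\textbf{The special case.} For $\TT=\PP$ we have $\Phi(\PP)=$ all $V$-distributors, so condition (ii) reduces to ``$h$ fully faithful and $h_*$ a distributor'', and the second is automatic. Hence $P$-embeddings are exactly the fully faithful $V$-functors. The main obstacle I anticipate is the bookkeeping in (i) $\Rightarrow$ (ii): extracting from the abstract hypothesis ``$Th$ is a left adjoint right inverse'' the concrete fact that its right adjoint must be given by right-composition with $h_*$ (so that the left-adjoint-right-inverse structure forces $h_*\in\Phi$), rather than merely that \emph{some} right adjoint exists. This is handled by uniqueness of adjoints together with the explicit formula $Qh=(\;)\cdot h_*$ for the right adjoint of $Ph$ in $\VCat$, plus the fact that the inclusion $\Phi X\hookrightarrow PX$ is fully faithful so adjunctions and their units/counits transfer along the (co)restriction.
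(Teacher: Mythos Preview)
Your proof follows essentially the same route as the paper's: in both directions the right adjoint $Th_\sharp$ is identified with the (co)restriction of $Qh=(-)\cdot h_*$, and full faithfulness of $h$ amounts to $h^*\cdot h_*=(1_X)^*$. One correction, however: in (ii)$\Rightarrow$(i) your appeal to condition (2) of admissibility for the well-definedness of $Th_\sharp(\psi)=\psi\cdot h_*$ is misplaced. Condition (2) only says that $\Phi$ is closed under composition with $f^*$ for a $V$-\emph{functor} $f$; it says nothing about composing two arbitrary $\Phi$-distributors. The conclusion is correct nonetheless (and the paper asserts it without further detail): since $h_*\in\Phi$, the mate $\overline{h_*}\colon Y\to TX$, $y\mapsto y^*\cdot h_*$, is a $V$-functor, and one computes $\psi\cdot h_*=\mu_X\bigl(T\overline{h_*}(\psi)\bigr)\in TX$ for every $\psi\in TY$.

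A minor stylistic difference: for full faithfulness of $h$ in (i)$\Rightarrow$(ii) you pass through full faithfulness of $Th$ and naturality of $\eta$, which is valid; the paper instead applies $Th_\sharp\cdot Th=1_{TX}$ directly to $\varphi=x^*$, obtaining $x^*\cdot h^*\cdot h_*=x^*$ for every $x$ and hence $h^*\cdot h_*=(1_X)^*$. Your identification of the ``main obstacle'' --- that the abstract right adjoint $Th_\sharp$ must agree with $(-)\cdot h_*$ --- is exactly right, and the Yoneda-type argument you sketch (evaluate at representables $x^*\in TX$ using that $\sigma_X$ is fully faithful) is what makes it work; the paper leaves this step implicit.
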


\begin{proof}
(ii) $\Rightarrow$ (i): Let $h$ be fully faithful with $h_*\in\Phi(\TT)$. As in the case of the presheaf monad, $\Phi h:\Phi X\to\Phi Y$ has always a right adjoint whenever $h_*\in\Phi(\TT)$, $\Phi^\dashv h:=(-)\cdot h_*\colon \Phi Y\to\Phi X$; that is, for each distributor $\psi:Y\dist E$ in $\Phi Y$, $\Phi^\dashv h(\psi)=\psi\cdot h_*$, which is well defined because by hypothesis $h_*\in\Phi(\TT)$. If $h$ is fully faithful, that is, if $h^*\cdot h_*=(1_X)^*$, then $(\Phi^\dashv h\cdot \Phi h)(\varphi)=\varphi\cdot h^*\cdot h_*=\varphi$.

(i) $\Rightarrow$ (ii): If $\Phi^\dashv h$ is well-defined, then $y^*\cdot h_*$ belongs to $\Phi(\TT)$ for every $y\in Y$, hence $h_*\in \Phi(\TT)$, by \ref{def:admi}(3), and so $h_*\in\Phi(\TT)$. Moreover, if $\Phi^\dashv h\cdot \Phi h=1_{\Phi X}$, then in particular $x^*\cdot h^*\cdot h_*=x^*$, for every $x\in X$, which is easily seen to be equivalent to $h^*\cdot h_*=(1_X)^*$.
\end{proof}

In $\VDist$, given a $V$-distributor $\varphi\colon (X,a)\dist (Y,b)$, the functor $(\;)\cdot\varphi$ preserves suprema, and therefore it has a right adjoint $[\varphi,-]$ (since the hom-sets in $\VDist$ are complete ordered sets):
\[\Dist(X,Z)\adjunct{(\;)\cdot\varphi}{[\varphi,-]}\Dist(Y,Z).\]
For each distributor $\psi\colon X\dist Z$,
\[\xymatrix{X\ar[r]|-{\circ}^{\psi}\ar[d]|-{\circ}_{\varphi}&Z\\
Y\ar@{}[ru]^{\leq}\ar[ru]|-{\circ}_{[\varphi,\psi]}}\]
$[\varphi,\psi]\colon Y\dist Z$ is defined by
\[ [\varphi,\psi](y,z)=\bigwedge_{x\in X}\,\hom(\varphi(x,y),\psi(x,z)).\]

\begin{definitions}\begin{enumerate}
\item Given a $V$-functor $f\colon X\to Z$ and a distributor (here called \emph{weight}) $\varphi\colon X\dist Y$, a \emph{$\varphi$-weighted colimit} of $f$ (or simply a  \emph{$\varphi$-colimit} of $f$), whenever it exists, is a $V$-functor $g\colon Y\to Z$ such that $g_*=[\varphi,f_*]$. One says then that \emph{$g$ represents $[\varphi,f_*]$}.
\item A $V$-category $Z$ is called \emph{$\varphi$-cocomplete} if it has a colimit for each weighted diagram with weight $\varphi\colon(X,a)\dist(Y,b)$; i.e. for each $V$-functor $f\colon X\to Z$, the $\varphi$-colimit of $f$ exists.
\item Given a class $\Phi$ of $V$-distributors, a $V$-category $Z$ is called \emph{$\Phi$-cocomplete} if it is $\varphi$-cocomplete for every $\varphi\in \Phi$. When $\Phi=\VDist$, then $Z$ is said to be \emph{cocomplete}.
\end{enumerate}
\end{definitions}

The proof of the following result can be found in \cite{Ho11, CH08}.
\begin{theorem}\label{th:ch}
Given a submonad $\TT$ of $\PP$, for a $V$-category $X$ the following assertions are equivalent:
\begin{tfae}
\item $X$ is a $\TT$-algebra.
\item $X$ is injective with respect to $T$-embeddings.
\item $X$ is $\Phi(\TT)$-cocomplete.
\end{tfae}
\end{theorem}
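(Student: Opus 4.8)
The plan is to establish the cycle (i) $\Rightarrow$ (ii) $\Rightarrow$ (iii) $\Rightarrow$ (i), leaning on the general theory of lax idempotent monads already recalled (Theorem \ref{th:KZ}) and on the identification of $T$-embeddings via $\Phi(\TT)$ in Proposition \ref{prop:emb}. For (i) $\Rightarrow$ (ii): if $X$ is a $\TT$-algebra, pick a structure map $\alpha\colon TX\to X$; by Theorem \ref{th:KZ} we may take $\alpha\dashv \eta_X$ with $\alpha\cdot\eta_X=1_X$. Given a $T$-embedding $h\colon A\to B$ with splitting $Th_\sharp$ (so $Th\dashv Th_\sharp$, $Th_\sharp\cdot Th=1_{TA}$) and a $V$-functor $f\colon A\to X$, one extends $f$ along $h$ by $g:=\alpha\cdot Tf\cdot Th_\sharp\cdot \eta_B$; the verification that $g\cdot h\simeq f$ uses naturality of $\eta$, the inverse-splitting equation, and $\alpha\cdot\eta_X=1_X$, and then separation is not needed here since injectivity only asks for $g\cdot h\simeq f$ (or, using that these monads land in separated $V$-categories and that $h$ is a mono there, actual equality). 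The cleanest route is the standard adjoint-functor computation: $g_*$ is forced to be $Th_\sharp$-related to $f_*$, and lax idempotency makes the required triangle commute up to the $2$-cells that $\simeq$ allows.

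For (ii) $\Rightarrow$ (iii): one must show that injectivity with respect to $T$-embeddings yields all $\varphi$-colimits for $\varphi\in\Phi(\TT)$. The key observation is that, for a weight $\varphi\colon (A,a)\dist (Y,b)$ in $\Phi(\TT)$ and a $V$-functor $f\colon A\to X$, the $\varphi$-colimit of $f$ exists in $X$ iff $f$ extends along a canonically associated $T$-embedding. Concretely, $\varphi\in\Phi(\TT)$ means each $y^*\cdot\varphi$ lies in $TA$, so $\varphi$ induces a $V$-functor $Y\to TA$; composing Yoneda-type data one builds an embedding $j_\varphi\colon A\to A_\varphi$ (the "graph" of $\varphi$, i.e. $A$ together with the representing objects of the $\varphi$-colimits in $PA$ restricted to $\Phi A$) which is a $T$-embedding precisely because $(j_\varphi)_*\in\Phi(\TT)$, by Proposition \ref{prop:emb}. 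Injectivity then produces an extension $\bar f\colon A_\varphi\to X$, and its restriction to the adjoined objects represents $[\varphi,f_*]$; this is where admissibility condition \ref{def:admi}(4) does the work of making the extension behave like a genuine colimit rather than merely a cocone. I would present this via the universal property of $\yoneda_A\colon A\to PA$ and the factorisation of $\varphi$-weighted cocones through $PA$, keeping $\Phi A$ in the picture throughout.

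For (iii) $\Rightarrow$ (i): if $X$ is $\Phi(\TT)$-cocomplete, apply this to the identity-type weight. The $V$-category $TX=\Phi(\TT)X$ carries the distributor $(\eta_X)_*\colon X\dist TX$, and this distributor lies in $\Phi(\TT)$ by \ref{def:admi}(1). Hence $\mathrm{id}_X\colon X\to X$ has a $(\eta_X)_*$-weighted colimit; unwinding, this is exactly a left adjoint $\alpha\colon TX\to X$ to $\eta_X$ with $\alpha\cdot\eta_X=1_X$, which by Theorem \ref{th:KZ}(iii)$\Rightarrow$(i) makes $X$ a $\TT$-algebra. Equivalently, $\Phi(\TT)$-cocompleteness gives a representing object for $[\ta, (\yoneda_X)_*]$ restricted to $\Phi(\TT)X$, and that representation is the algebra structure.

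The main obstacle is the (ii) $\Rightarrow$ (iii) step: one must manufacture, for every weight $\varphi\in\Phi(\TT)$, an honest $T$-embedding whose extension property is equivalent to the existence of the $\varphi$-colimit, and then check that an injective extension really represents $[\varphi,f_*]$ and is not just a weak/lax cocone. This is precisely where admissibility condition \ref{def:admi}(4) is indispensable, and getting the bookkeeping right between $PA$, $\Phi A$, and the adjoined colimit objects is the delicate part; everything else is a routine application of Theorem \ref{th:KZ} and Proposition \ref{prop:emb}. I would, if space permits, simply cite \cite{Ho11, CH08} for this equivalence and only sketch the three implications as above, since the statement is explicitly flagged as known.
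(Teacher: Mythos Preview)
The paper does not prove this theorem; it simply records that the proof ``can be found in \cite{Ho11, CH08}'', which is precisely what your closing sentence proposes. So there is no paper-side argument to compare against.

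On your sketch itself, two points. In (iii) $\Rightarrow$ (i) you justify $(\eta_X)_*\in\Phi(\TT)$ by Definition~\ref{def:admi}(1), but that condition concerns $f^*$, not $f_*$. The claim is nevertheless correct: $\eta_X$ is a $T$-embedding (since $T\eta_X\dashv\mu_X$ and $\mu_X\cdot T\eta_X=1_{TX}$ by lax idempotency and the monad law), so $(\eta_X)_*\in\Phi(\TT)$ by Proposition~\ref{prop:emb}; alternatively one checks $\varphi^*\cdot(\eta_X)_*=\varphi$ for each $\varphi\in TX$ directly. More seriously, your (ii) $\Rightarrow$ (iii) is not a proof: $A_\varphi$ and $j_\varphi$ are never actually defined, and you give no argument that an arbitrary injective extension along $j_\varphi$ represents $[\varphi,f_*]$ rather than merely extending $f$. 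The assertion that condition \ref{def:admi}(4) is ``indispensable'' here is also unsupported. The standard route bypasses this step entirely by proving (ii) $\Rightarrow$ (i) directly: since $\eta_X\colon X\to TX$ is a $T$-embedding, injectivity of $X$ yields $\alpha\colon TX\to X$ with $\alpha\cdot\eta_X=1_X$, and Theorem~\ref{th:KZ} then gives the algebra structure; (iii) follows from (i) because $\alpha(\varphi\cdot f^*)$ represents the required colimit for each $\varphi\in TA$ and $f\colon A\to X$.
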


$\Phi(\TT)$-cocompleteness of a $V$-category $X$ is guaranteed by the existence of some special weighted colimits, as we explain next. (Here we present very briefly the properties needed. For more information on this topic see \cite{St04}.)

\begin{lemma}
For a distributor $\varphi\colon X\to Y$ and a $V$-functor $f\colon X\to Z$, the following assertions are equivalent:
\begin{tfae}
\item there exists the $\varphi$-colimit of $f$;
\item there exists the $(\varphi\cdot f^*)$-colimit of $1_Z$;
\item for each $y\in Y$, there exists the $(y^*\cdot\varphi)$-colimit of $f$.
\end{tfae}
\end{lemma}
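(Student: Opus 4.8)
The plan is to prove the three equivalences by a cyclic argument, (i) $\Rightarrow$ (ii) $\Rightarrow$ (iii) $\Rightarrow$ (i), unravelling in each case what the representing $V$-functor must be and reading the condition $g_*=[\varphi,f_*]$ in terms of distributor composites. The key computational facts are: first, for any $V$-functor $f\colon X\to Z$, the identity $f_*\cdot f^*\cdot f_* = f_*$ together with $a\le f^*\cdot f_*$, which lets us rewrite $[\varphi,f_*]$ as $[\varphi\cdot f^*, 1_Z{}_*]\cdot$-type expressions after precomposing; and second, the adjunction $(\;)\cdot\varphi\dashv[\varphi,-]$ and its behaviour under composition of weights, namely $[\psi\cdot\varphi,-]\cong[\psi,-]\cdot$(appropriately) so that $[\varphi,f_*] = [\varphi\cdot f^*\cdot b,\ldots]$ when one slices $\varphi$ through $f$.

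For (i) $\Rightarrow$ (ii): suppose $g\colon Y\to Z$ represents $[\varphi,f_*]$, i.e. $g_*=[\varphi,f_*]$. I would show $g$ also represents $[\varphi\cdot f^*, (1_Z)_*]$. Using that $\varphi\cdot f^* \colon Z\dist Y$ and that $[\varphi,f_*](y,z)=\bigwedge_{x}\hom(\varphi(x,y),Z(f(x),z))$, one rewrites this infimum by substituting $Z(f(x),z)=f_*\cdot(1_Z)_*$ evaluated appropriately and using the adjunction $f_*\dashv f^*$ to transfer the quantifier over $X$ to a quantifier that goes through $Z$; concretely $[\varphi,f_*]=[\varphi, (1_Z)_*\cdot f_*]$ and the exponential-adjoint identity $[\psi\cdot\chi,\theta]=[\chi,[\psi,\theta]]$ gives $[\varphi, (1_Z)_*\cdot f_*] = [f_*\cdot(\text{dual of }\varphi)\ldots]$ — the clean statement is $[\varphi, f_*] = [(f^\circ\text{-twist of }\varphi), (1_Z)_*]$, which is exactly $[\varphi\cdot f^*,(1_Z)_*]$ once one is careful about variances. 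So $g_* = [\varphi\cdot f^*, (1_Z)_*]$, which says $g$ is the $(\varphi\cdot f^*)$-colimit of $1_Z$. The converse direction (ii) $\Rightarrow$ (i) is the same computation run backwards: from $g_*=[\varphi\cdot f^*,(1_Z)_*]$ one recovers $g_* = [\varphi, f_*]$ by composing back with $f_*$ and using $f_*\cdot f^*\cdot f_*=f_*$.

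For (i) $\Leftrightarrow$ (iii): here the point is that $[\varphi,f_*]\colon Y\dist Z$ is determined by its "rows" $y^*\cdot[\varphi,f_*] = [y^*\cdot\varphi, f_*]$ for $y\in Y$ — this is just the identity $[\varphi,\psi](y,-) = [y^*\cdot\varphi,\psi](*, -)$, i.e. slicing the weight on the $Y$-side corresponds to slicing the weighted colimit on the output. So $g$ represents $[\varphi,f_*]$ iff for every $y$ the element $g(y)\in Z$ (viewed as $g(y)\colon E\to Z$) represents $[y^*\cdot\varphi, f_*]$, i.e. iff each $(y^*\cdot\varphi)$-colimit of $f$ exists and we assemble them; one still has to check the assembly $y\mapsto g(y)$ is a $V$-functor, which follows because $[\varphi,f_*]$ is itself a distributor and a family of elements $\{g(y)\}$ with $(g(y))_* = y^*\cdot g_*$ for a distributor $g_*$ automatically assembles to a $V$-functor (using Remark \ref{rem:adjcond}, $g_*$ being a distributor forces $g$ to be a $V$-functor). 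The main obstacle is precisely this last assembly step and bookkeeping the variances in the slicing identities $[\varphi\cdot f^*,-] $ vs. $[\varphi,f_*\cdot-]$; once those two lemmas about $[-,-]$ (composition of weights, and restriction along elements) are stated cleanly, all three equivalences are short formal manipulations in $\VDist$.
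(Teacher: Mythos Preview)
Your approach is essentially the paper's: (i)$\Leftrightarrow$(ii) comes from the identity $[\varphi,f_*]=[\varphi\cdot f^*,(1_Z)_*]$, and (i)$\Leftrightarrow$(iii) is the pointwise slicing you describe, with Remark~\ref{rem:adjcond} guaranteeing $V$-functoriality of the assembled map $g$. But your derivation of the key identity is garbled. The extension law you quote, $[\psi\cdot\chi,\theta]=[\chi,[\psi,\theta]]$, is incorrect (the right form is $[\psi\cdot\chi,\theta]=[\psi,[\chi,\theta]]$, since adjoints compose in reverse), and in any case it acts on composites in the \emph{first} argument of $[\,{-}\,,\,{-}\,]$, so invoking it on $[\varphi,(1_Z)_*\cdot f_*]$ leads nowhere. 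The clean route is $[\varphi\cdot f^*,(1_Z)_*]=[\varphi,[f^*,(1_Z)_*]]=[\varphi,f_*]$, the last step because $f_*\dashv f^*$ forces $[f^*,(1_Z)_*]=f_*$; alternatively, a two-line pointwise computation of both sides suffices, which is what the paper means by ``straightforward to check''. The facts $f_*\cdot f^*\cdot f_*=f_*$ and $a\le f^*\cdot f_*$ that you flag as ``key'' play no role here. Finally, your announced cycle (i)$\Rightarrow$(ii)$\Rightarrow$(iii)$\Rightarrow$(i) never materialises; like the paper you end up proving (i)$\Leftrightarrow$(ii) and (i)$\Leftrightarrow$(iii) separately, which is fine but should be stated as such.
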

\begin{proof}
(i) $\Leftrightarrow$ (ii): It is straightforward to check that
\[ [\varphi,f_*]=[\varphi\cdot f^*,(1_Z)_*].\]

(i) $\Leftrightarrow$ (iii): Since $[\varphi,f_*]$ is defined pointwise, it is easily checked that, if $g$ represents $[\varphi,f_*]$, then, for each $y\in Y$, the $V$-functor $\xymatrix{E\ar[r]^y&Y\ar[r]^g&Z}$ represents $[y^*\cdot \varphi,f_*]$.

Conversely, if, for each $y\colon E\to Y$, $g_y\colon E\to Z$ represents $[y^*\cdot\varphi,f_*]$, then the map $g\colon Y\to Z$ defined by $g(y)=g_y(*)$ is such that $g_*=[\varphi,f_*]$; hence, as stated in Remark \ref{rem:adjcond}, $g$ is automatically a $V$-functor.
\end{proof}

\begin{corollary}
Given a submonad $\TT$ of $\PP$, a $V$-category $X$ is a $\TT$-algebra if, and only if, $[\varphi, (1_X)_*]$ has a colimit for every $\varphi\in TX$.
\end{corollary}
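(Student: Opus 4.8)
The plan is to combine the previous lemma with Theorem~\ref{th:ch}(iii), reducing $\Phi(\TT)$-cocompleteness to the existence of a single family of colimits of identity functors. First I would recall that, by Theorem~\ref{th:ch}, $X$ is a $\TT$-algebra exactly when $X$ is $\Phi(\TT)$-cocomplete, i.e.\ when for every weight $\varphi\colon Z\dist Y$ in $\Phi(\TT)$ and every $V$-functor $f\colon Z\to X$, the $\varphi$-colimit of $f$ exists. The previous lemma says this $\varphi$-colimit of $f$ exists if and only if the $(\varphi\cdot f^*)$-colimit of $1_X$ exists; since $\Phi(\TT)$ is admissible, condition~\ref{def:admi}(2) gives $\varphi\cdot f^*\in\Phi(\TT)$ whenever $\varphi\in\Phi(\TT)$, so every weight arising this way is a distributor $X\dist Y$ in $\Phi(\TT)$. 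Hence $\Phi(\TT)$-cocompleteness of $X$ is equivalent to: for every $\psi\colon X\dist Y$ in $\Phi(\TT)$, the $\psi$-colimit of $1_X$ exists.

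Next I would reduce the codomain $Y$ to $E$. By the same lemma (equivalence (i)$\Leftrightarrow$(iii)), the $\psi$-colimit of $1_X$ exists if and only if, for each $y\in Y$, the $(y^*\cdot\psi)$-colimit of $1_X$ exists; and by admissibility~\ref{def:admi}(3), $\psi\in\Phi(\TT)$ iff $y^*\cdot\psi\in\Phi(\TT)$ for all $y\in Y$. So $\Phi(\TT)$-cocompleteness of $X$ amounts to the existence of the $\varphi$-colimit of $1_X$ for every $\varphi\colon X\dist E$ in $\Phi(\TT)$. Finally, I would identify $\Phi(\TT)X$, the set of distributors $X\dist E$ in $\Phi(\TT)$, with $TX$: this is precisely the definition of the admissible class in \eqref{eq:fai} together with the identification of $TX$ as a subcategory of $PX=\{\varphi\colon X\dist E\}$ via $\sigma_X$ (a distributor $\varphi\colon X\dist E$ lies in $\Phi(\TT)$ iff $y^*\cdot\varphi=\varphi\in TX$, the single element $y$ being the point of $E$). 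Unwinding Definition (1), the $\varphi$-colimit of $1_X$ is exactly a $V$-functor representing $[\varphi,(1_X)_*]$, which gives the stated criterion.

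The main point to be careful about is the bookkeeping of which weights actually appear and that each lies in $\Phi(\TT)$: one must check that in passing from a general weighted diagram $(\varphi,f)$ to the corresponding weight $\varphi\cdot f^*$ (and then to $y^*\cdot(\varphi\cdot f^*)$) one never leaves the class $\Phi(\TT)$, which is exactly what conditions~\ref{def:admi}(2) and~\ref{def:admi}(3) provide, and conversely that every $\varphi\in TX$ arises as such a weight — but this is immediate since one may take $f=1_X$ and $Y=E$. Thus the only genuine content is assembling the two equivalences of the previous lemma with the admissibility axioms; no new computation is needed beyond observing that $[\varphi,(1_X)_*]$ is the object whose representability is in question, so I would simply state the chain of equivalences and conclude.

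\begin{proof}
By Theorem~\ref{th:ch}, $X$ is a $\TT$-algebra if and only if $X$ is $\Phi(\TT)$-cocomplete, that is, if and only if for every weight $\varphi\colon Z\dist Y$ in $\Phi(\TT)$ and every $V$-functor $f\colon Z\to X$ the $\varphi$-colimit of $f$ exists. By the previous lemma, this colimit exists if and only if the $(\varphi\cdot f^*)$-colimit of $1_X$ exists; moreover $\varphi\cdot f^*\in\Phi(\TT)$ by \ref{def:admi}(2), and conversely every distributor in $\Phi(\TT)$ of the form $\psi\colon X\dist Y$ arises this way (take $Z=X$, $f=1_X$). Hence $X$ is a $\TT$-algebra if and only if, for every $\psi\colon X\dist Y$ in $\Phi(\TT)$, the $\psi$-colimit of $1_X$ exists.

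Applying the equivalence (i)$\Leftrightarrow$(iii) of the previous lemma, the $\psi$-colimit of $1_X$ exists if and only if the $(y^*\cdot\psi)$-colimit of $1_X$ exists for every $y\in Y$; by \ref{def:admi}(3), $\psi\in\Phi(\TT)$ if and only if $y^*\cdot\psi\in\Phi(\TT)$ for all such $y$. Therefore $X$ is a $\TT$-algebra if and only if the $\varphi$-colimit of $1_X$ exists for every $\varphi\colon X\dist E$ in $\Phi(\TT)$, i.e.\ for every $\varphi\in TX$. By definition, the $\varphi$-colimit of $1_X$ is a $V$-functor representing $[\varphi,(1_X)_*]$, which proves the claim.
\end{proof}
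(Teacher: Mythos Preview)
Your proof is correct and follows exactly the route the paper intends: the corollary is stated without proof in the paper because it is meant to be immediate from Theorem~\ref{th:ch} together with the preceding lemma, and you have simply spelled out that deduction, using admissibility conditions~\ref{def:admi}(2) and~(3) to keep the reduced weights inside $\Phi(\TT)$ and identifying $\Phi(\TT)X$ with $TX$ via \eqref{eq:fai}.
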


\begin{remark}
Given $\varphi\colon X\dist E$ in $TX$, in the diagram
\[\xymatrix{X\ar[r]|-{\circ}^{a}\ar[d]|-{\circ}_{\varphi}&X\\
Y\ar@{}[ru]^{\leq}\ar[ru]|-{\circ}_{[\varphi,a]}}\]
\[[\varphi,a](*,x)=\bigwedge_{x'\in X}\hom(\varphi(x',*),a(x',x))=TX(\varphi,x^*).\]
Therefore, if $\alpha\colon TX\to X$ is a $\TT$-algebra structure, then
\[ [\varphi,a](*,x)=TX(\varphi,x^*)=X(\alpha(\varphi),x),\]
that is, $[\varphi,a]=\alpha(\varphi)_*$; this means that $\alpha$ assigns to each distributor $\varphi\colon X\dist E$ the representative of $[\varphi,(1_X)_*]$.
\end{remark}

Hence, we may describe the category of $\TT$-algebras as follows.

\begin{theorem}\label{thm:charact}
\begin{enumerate}
\item A map $\alpha\colon TX\to X$ is a $\TT$-algebra structure if, and only if, for each distributor $\varphi\colon X\dist E$ in $TX$, $\alpha(\varphi)_*=[\varphi,(1_X)_*]$.
\item If $X$ and $Y$ are $\TT$-algebras, then a $V$-functor $f\colon X\to Y$ is a $\TT$-homomorphism if, and only if, $f$ preserves $\varphi$-weighted colimits for any $\varphi\in TX$, i.e., if $x\in X$ represents $[\varphi,(1_X)_*]$, then $f(x)$ represents $[\varphi\cdot f^*,(1_Y)_*]$.
\end{enumerate}
\end{theorem}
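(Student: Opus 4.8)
\textbf{Plan for the proof of Theorem \ref{thm:charact}.}

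The strategy is to reduce both parts to the machinery already assembled: Theorem \ref{th:KZ}, the characterisation of $\TT$-algebra structures as left adjoints to $\eta_X$ that split $\eta_X$, together with the computation in the preceding Remark identifying $[\varphi,a]$ with $TX(\varphi,x^*)$, i.e. with $\alpha(\varphi)_*$ when $\alpha$ exists. For part (1), I would argue as follows. If $\alpha\colon TX\to X$ is a $\TT$-algebra structure, then by Theorem \ref{th:KZ} it is the left adjoint of $\eta_X$ with $\alpha\cdot\eta_X=1_X$, so for every $\varphi\in TX$ and $x\in X$ we have $X(\alpha(\varphi),x)=TX(\varphi,\eta_X(x))=TX(\varphi,x^*)$ (using $\sigma_X\cdot\eta_X=\yoneda_X$ to identify $\eta_X(x)$ with $x^*$ inside $TX\subseteq PX$); by the Remark this last expression equals $[\varphi,a](*,x)=[\varphi,(1_X)_*](*,x)$, which says exactly $\alpha(\varphi)_*=[\varphi,(1_X)_*]$, i.e. $\alpha(\varphi)$ represents the $\varphi$-colimit of $1_X$. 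Conversely, if a map $\alpha\colon TX\to X$ satisfies $\alpha(\varphi)_*=[\varphi,(1_X)_*]$ for all $\varphi\in TX$, I first note that this forces $\alpha$ to be a $V$-functor: by the last Lemma and Remark \ref{rem:adjcond}, pointwise representability of weighted colimits yields automatic $V$-functoriality. Taking $\varphi=\eta_X(x)=x^*$ gives $\alpha(x^*)_*=[x^*,(1_X)_*]=(x)_*$ (since $x$ itself represents its own principal colimit), so $\alpha\cdot\eta_X=1_X$ as $V$-categories are separated enough for this after passing to $PX$; then the adjunction $\alpha\dashv\eta_X$ follows from $X(\alpha(\varphi),x)=TX(\varphi,x^*)$, which is precisely $[\varphi,(1_X)_*](*,x)=\alpha(\varphi)_*(*,x)$ unwound. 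By Theorem \ref{th:KZ}(iii) (or (ii)), $\alpha$ is then a $\TT$-algebra structure.

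For part (2), recall that a $V$-functor $f\colon X\to Y$ between $\TT$-algebras $(X,\alpha)$ and $(Y,\beta)$ is a $\TT$-homomorphism precisely when $f\cdot\alpha=\beta\cdot Tf$; by Theorem \ref{th:KZ}(2) the inequality $\beta\cdot Tf\leq f\cdot\alpha$ holds automatically, so being a homomorphism is equivalent to the reverse inequality $f\cdot\alpha\leq\beta\cdot Tf$, i.e. to $f(\alpha(\varphi))\leq\beta(Tf(\varphi))=\beta(\varphi\cdot f^*)$ for every $\varphi\in TX$. Using part (1) on both sides, $\alpha(\varphi)$ represents $[\varphi,(1_X)_*]$ and $\beta(\varphi\cdot f^*)$ represents $[\varphi\cdot f^*,(1_Y)_*]$; the condition $f(\alpha(\varphi))\leq\beta(\varphi\cdot f^*)$, combined with the fact that $\beta(\varphi\cdot f^*)$ is exactly the $(\varphi\cdot f^*)$-colimit of $1_Y$ (equivalently, by the Lemma, the $\varphi$-colimit of $f$), says precisely that $f(\alpha(\varphi))$ also represents $[\varphi\cdot f^*,(1_Y)_*]$ — in separated $Y$ this upgrades from $\leq$ to equality, but even without separation the statement ``$f(x)$ represents $[\varphi\cdot f^*,(1_Y)_*]$ whenever $x$ represents $[\varphi,(1_X)_*]$'' is the intended reading. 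So I would phrase the equivalence as: $f$ is a $\TT$-homomorphism iff $f$ sends representatives of $[\varphi,(1_X)_*]$ to representatives of $[\varphi\cdot f^*,(1_Y)_*]$ for all $\varphi\in TX$, which is the assertion ``$f$ preserves $\varphi$-weighted colimits for all $\varphi\in TX$'' once one invokes the Lemma to identify the $\varphi$-colimit of $f$ with the $(\varphi\cdot f^*)$-colimit of $1_Y$.

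\textbf{Main obstacle.} The delicate point is the bookkeeping around representability versus equality of $V$-functors: weighted colimits are only defined up to the equivalence $\simeq$, so statements like ``$\alpha(x^*)=x$'' or ``$f(\alpha(\varphi))=\beta(\varphi\cdot f^*)$'' are literally true only when the target is separated, whereas the clean formulation of Theorem \ref{th:KZ}(iii) wants $\alpha\cdot\eta_X=1_X$ on the nose. I expect the proof to handle this exactly as the surrounding text does — by working inside $PX$ (which is separated) via $\sigma_X$, so that $\alpha(\varphi)_* = [\varphi,(1_X)_*]$ is an honest equality of distributors and pins down $\alpha$ uniquely — and by stating part (2) in terms of ``represents'' rather than ``equals''. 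Apart from this, the argument is a direct unwinding of the Remark preceding the theorem together with Theorem \ref{th:KZ} and the colimit-decomposition Lemma; no new computation is needed.
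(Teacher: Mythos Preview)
Your proposal is correct and matches the paper's approach: the paper gives no separate proof of Theorem \ref{thm:charact}, treating it as an immediate consequence of the preceding Remark (which computes $[\varphi,a](*,x)=TX(\varphi,x^*)=X(\alpha(\varphi),x)$), the Corollary on $\Phi(\TT)$-cocompleteness, and Theorem \ref{th:KZ}. Your unwinding via the adjunction $\alpha\dashv\eta_X$ through Remark \ref{rem:adjcond} is exactly how the Remark is meant to be read, and your identification of the separation issue (that $\alpha(\varphi)_*=[\varphi,(1_X)_*]$ pins $\alpha$ down only up to $\simeq$, so $\alpha\cdot\eta_X=1_X$ holds on the nose only in the separated setting) is accurate; the paper tacitly absorbs this by moving to $\VCats$ in the subsequent sections where the theorem is actually used.
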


\section{On algebras for submonads of $\PP$: the special case of the formal ball monad}

From now on we will study more in detail $(\VCat)^\TT$ for special submonads $\TT$ of $\PP$. In our first example, the formal ball monad $\BB$, we will need to consider the (co)restriction of $\BB$ and $\PP$ to $\VCat_\sep$. We point out that the characterisations of $\TT$-algebras of Theorem \ref{th:ch} remain valid for these (co)restrictions.

The space of formal balls is an important tool in the study of (quasi-)metric spaces. Given a metric space $(X,d)$ its \emph{space of formal balls} is simply the collection of all pairs $(x,r),$ where $x \in X$ and $r \in [0,\infty[$.  This space can itself be equipped with a (quasi-)metric. Moreover this construction can naturally be made into a monad on the category of (quasi-)metric spaces (cf. \cite{GL19, KW11} and references there).

This monad can readily be generalised to $V$-categories, using a $V$-categorical structure in place of the (quasi-)metric.
We will start by considering an extended version of the formal ball monad, the  \emph{extended formal ball monad} $\BBb,$ which we define below.

\begin{definitions} The \emph{extended formal ball monad} $\BBb=(\Bb ,\eta, \mu)$ is given by the following:
\begin{enumerate}
\item[--] a functor $\Bb\colon\VCat\to\VCat$  which maps each $V$-category $X$  to  $\Bb X$ with underlying set $X\times V$ and   \[\Bb X((x,r),(y,s))= \hm{r}{   X(x,y) \otimes s }\]  and every $V$-functor  $f\colon X \to Y$ to the $V$-functor $\Bb f\colon\Bb X\to \Bb Y$ with $\Bb f(x,r)=(f(x),r)$;
\item[--] natural transformations $\eta\colon 1 \to \Bb$ and $\mu\colon \Bb\Bb \to \Bb$ with $\eta_X(x)=(x,k)$ and $\mu_X((x,r),s)=(x,r\otimes s)$, for every $V$-category $X$, $x\in X$, $r,s\in V$.
\end{enumerate}
The \emph{formal ball monad} $\BB$ is the submonad of $\BBb$ obtained when we only consider balls with radius different from $\bot$.
\end{definitions}

\begin{remark}
Note that $\BBb X$ is not separated if $X$ has more than one element (for any $x,y \in X$, $(x,\bot)\simeq (y,\bot)$), while, as shown in \ref{prop:canc}, for $X$ separated, separation of $\BB X$ depends on an extra property of the quantale $V$.
\end{remark}

Using Corollaries \ref{cor:morphism} and \ref{cor:laxidpt}, it is easy to check that

\begin{prop}\label{prop:Bbmonadmorphismff}
There is a pointwise fully faithful monad morphism $\sigma \colon \BBb \to  \PP$. In particular, both $\BBb$ and $\BB$ are lax-idempotent.
\end{prop}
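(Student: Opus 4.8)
The plan is to verify the hypotheses of Corollary \ref{cor:morphism} for the monad $\BBb$, namely that $\eta$ satisfies BC* and that each $\eta_X$ is fully faithful, and then deduce lax idempotency of $\BBb$ (and of its submonad $\BB$) from Corollary \ref{cor:laxidpt}.

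First I would check \emph{full faithfulness of $\eta_X$}: by definition $\eta_X(x)=(x,k)$, so $\Bb X(\eta_X(x),\eta_X(x'))=\hom(k,X(x,x')\otimes k)=\hom(k,X(x,x'))=X(x,x')$, using that $k$ is the $\otimes$-unit and that $\hom(k,-)=\mathrm{id}$; hence $\eta_X$ is fully faithful, and the same computation restricts to $\BB$ (radii equal to $k\neq\bot$). Next I would verify \emph{BC* for $\eta$}: for a $V$-functor $f\colon(X,a)\to(Y,b)$ one must show that the naturality square is a BC*-square, i.e. $(\Bb f)^*\cdot(\eta_Y)_*\leq(\eta_X)_*\cdot f^*$ as distributors from $Y$ to $\Bb X$ — in fact I expect equality. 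Unwinding, for $y\in Y$ and $(x,r)\in\Bb X$ the left side is $\Bb Y(\eta_Y(y),\Bb f(x,r))=\Bb Y((y,k),(f(x),r))=\hom(k,b(y,f(x))\otimes r)=b(y,f(x))\otimes r$, while the right side is $\bigvee_{x'\in X}b(y,f(x'))\otimes\Bb X(\eta_X(x'),(x,r))=\bigvee_{x'}b(y,f(x'))\otimes\hom(k,a(x',x)\otimes r)=\bigvee_{x'}b(y,f(x'))\otimes a(x',x)\otimes r$. The two agree: the $x'=x$ term of the supremum already gives $b(y,f(x))\otimes a(x,x)\otimes r\geq b(y,f(x))\otimes r$ by (R), and conversely $b(y,f(x'))\otimes a(x',x)\leq b(y,f(x'))\otimes b(f(x'),f(x))\leq b(y,f(x))$ by (C) and (T), so the supremum is $\leq b(y,f(x))\otimes r$. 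Thus the square commutes and $\eta$ satisfies BC* (indeed ``fully''), and the identical computation works after restricting to balls of radius $\neq\bot$, so $\eta$ satisfies BC* for $\BB$ as well.

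With these two facts in hand, Corollary \ref{cor:morphism} applied to $\BBb$ (and to $\BB$) yields a monad morphism $\sigma\colon\BBb\to\PP$ (respectively $\sigma\colon\BB\to\PP$); moreover, since $\eta_X=\sigma_X\cdot\eta_X$ factors the fully faithful $\sigma_X\cdot\eta_X$... — more directly, $\sigma$ is pointwise fully faithful because, tracing through the construction in the proof of Theorem \ref{th:submonad}, $\sigma_X=Q\eta_X\cdot\yoneda_{\Bb X}$ is a composite of fully faithful $V$-functors once one knows $Q\eta_X\cdot\yoneda_{\Bb X}$ is fully faithful; this last point follows from BC* for $\eta$ together with preservation of fully faithful $V$-functors by $P$. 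Finally, Corollary \ref{cor:laxidpt} applies verbatim to the pointwise fully faithful monad morphism $\sigma$, giving that both $\BBb$ and $\BB$ are lax idempotent.

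The only genuine subtlety I anticipate is the verification that $Q\eta_X\cdot\yoneda_{\Bb X}$ is fully faithful (equivalently, the direct check that the $\sigma_X$ produced really is an embedding), since this is where one must combine BC* for $\eta$ with the fact — established in the previous section — that $P$ preserves fully faithful $V$-functors; everything else is the routine quantale arithmetic indicated above. Since $\BBb X$ is typically not separated, for $\BBb$ one works with the (co)restriction to $\VCat_\sep$ as flagged before the statement, but the argument is unchanged.
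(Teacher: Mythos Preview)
Your overall strategy---verify BC* for $\eta$ and full faithfulness of $\eta_X$, invoke Corollary~\ref{cor:morphism} to produce $\sigma$, then invoke Corollary~\ref{cor:laxidpt} for lax idempotency---is exactly the paper's, and your computations for BC* and for $\eta_X$ fully faithful are correct.

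The genuine gap is precisely where you flag it: the full faithfulness of $\sigma_X=Q\eta_X\cdot\yoneda_{\Bb X}$. Your justification (``follows from BC* for $\eta$ together with preservation of fully faithful $V$-functors by $P$'') does not work. Note that $Q\eta_X$ itself is \emph{not} fully faithful in general (that would require $\eta_X$ to be fully dense, not fully faithful), so $\sigma_X$ is not a composite of fully faithful functors. More to the point, Theorem~\ref{th:submonad} lists ``$Q\eta_X\cdot\yoneda_{TX}$ fully faithful'' as an \emph{independent} hypothesis alongside BC* for $\eta$; it is not derived from the other conditions. The paper settles this by a two-line direct computation: for $(x,r),(y,s)\in\Bb X$,
\[
PX(\sigma_X(x,r),\sigma_X(y,s))=\bigwedge_{z\in X}\hom(X(z,x)\otimes r,\,X(z,y)\otimes s)\leq\hom(X(x,x)\otimes r,\,X(x,y)\otimes s)\leq\hom(r,X(x,y)\otimes s),
\]
the last step using $k\leq X(x,x)$; the reverse inequality is automatic since $\sigma_X$ is a $V$-functor. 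You should replace your abstract appeal with this computation.

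Your closing remark about restricting to $\VCat_\sep$ is misplaced. The proposition only claims a \emph{pointwise fully faithful} monad morphism, not an embedding or submonad; non-separation of $\Bb X$ is irrelevant here (the paper explicitly notes afterward that $\sigma$ is not pointwise monic). Separation only enters later, for $\BB$ over cancellative quantales.
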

 \begin{proof}
First of all let us check that $\eta$ satisfies BC*, i.e., for any $V$-functor $f\colon X\to Y$,
\[\xymatrix{X\ar[r]|-{\circ}^{(\eta_X)_*}&\Bb X\\
Y\ar@{}[ru]|{\geq}\ar[u]|-{\circ}^{f^*}\ar[r]|-{\circ}_{(\eta_Y)_*}&\Bb Y\ar[u]|-{\circ}_{(\Bb f)^*}}\]
For $y\in Y$, $(x,r)\in\Bb X$,
\begin{align*}
((\Bb f)^*(\eta_Y)_*)(y,(x,r))&=\Bb Y((y,k),(f(x),r))=Y(y,f(x))\otimes r\\
&\leq \bigvee_{z\in X}Y(y,f(z))\otimes X(z,x)\otimes r=\bigvee_{z\in X} Y(y,f(z))\otimes\Bb X((z,k),(x,r))\\
&=((\eta_X)_*f^*)(y,(x,r)).
\end{align*}

Then, by Corollary \ref{cor:morphism}, for each $V$-category $X$, $\sigma_X$ is defined as in the proof of Theorem \ref{th:submonad}, i.e. for each $(x,r)\in\Bb X$, $\sigma_X(x,r)=\Bb X((-,k),(x,r))\colon X\to V$; more precisely, for each $y\in X$, $\sigma_X(x,r)(y)=X(y,x)\otimes r$.

Moreover, $\sigma_X$ is fully faithful: for each $(x,r), (y,s)\in \Bb X$,
\begin{align*}
\Bb X((x,r),(y,s))&=\hom(r,X(x,y)\otimes s)\geq \hom(X(x,x)\otimes r, X(x,y)\otimes s)\\
&\geq \bigwedge_{z\in X}\hom(X(z,x)\otimes r,X(z,y)\otimes s)=PX(\sigma(x,r),\sigma(y,s)).
\end{align*}
\end{proof}

It is clear that $\sigma\colon\BBb\to\PP$ is not pointwise monic; indeed, if $r=\bot$, then $\sigma_X(x,\bot)\colon X\dist E$ is the distributor that is constantly $\bot$, for any $x\in X$. Still it is interesting to identify the $\BBb$-algebras via the existence of special weighted colimits.

\begin{prop}\label{prop:Balg}
For a $V$-category $X$, the following conditions are equivalent:
\begin{tfae}
\item $X$ has a $\BBb$-algebra structure $\alpha\colon\Bb X\to X$;
\item $(\forall x\in X)\;(\forall r\in V)\;(\exists x\oplus r\in X)\;(\forall y\in X)\;\; X(x\oplus r,y)=\hom(r,X(x,y))$;
\item for all $(x,r)\in\Bb X$, every diagram of the sort
\[\xymatrix{X\ar[r]|-{\circ}^{(1_X)_*}\ar[d]|-{\circ}_{\sigma_X(x,r)}&X\\
E\ar@{}[ru]^{\leq}\ar[ru]|-{\circ}_{[\sigma_X(x,r),(1_X)_*]}}\]
has a (weighted) colimit.
\end{tfae}
\end{prop}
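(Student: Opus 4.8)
The plan is to deduce (i)~$\Leftrightarrow$~(ii) from the lax idempotency of $\BBb$, and (ii)~$\Leftrightarrow$~(iii) from a hands-on computation of the weighted colimit involved. Note that $\BBb$ is \emph{not} a submonad of $\PP$ (by Proposition \ref{prop:Bbmonadmorphismff} the comparison $\sigma$ is pointwise fully faithful but, as remarked right after that proposition, not pointwise monic), so the algebra/cocompleteness dictionary of Theorem \ref{th:ch} is not directly available; I would instead argue (ii)~$\Leftrightarrow$~(iii) by inspection.

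For (i)~$\Leftrightarrow$~(ii): since $\BBb$ is lax idempotent, Theorem \ref{th:KZ} says that a map $\alpha\colon\Bb X\to X$ is a $\BBb$-algebra structure exactly when it is a $V$-functor with $\alpha\dashv\eta_X$ and $\alpha\cdot\eta_X=1_X$. By Remark \ref{rem:adjcond}, the conjunction ``$\alpha$ is a $V$-functor and $\alpha\dashv\eta_X$'' is equivalent to the family of equalities
\[
X(\alpha(x,r),y)\;=\;\Bb X\big((x,r),\eta_X(y)\big)\;=\;\Bb X\big((x,r),(y,k)\big)\;=\;\hom\big(r,X(x,y)\big),
\]
for all $(x,r)\in\Bb X$ and $y\in X$. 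Reading this left to right, an algebra structure $\alpha$ furnishes the elements $x\oplus r:=\alpha(x,r)$ of (ii) (with $x\oplus k=x$ coming from $\alpha\cdot\eta_X=1_X$). Conversely, given (ii), pick for each pair a witness $x\oplus r$, choosing $x\oplus k:=x$ (permissible since $\hom(k,X(x,y))=X(x,y)$), and define $\alpha(x,r):=x\oplus r$; Remark \ref{rem:adjcond} then makes $\alpha$ a $V$-functor with $\alpha\dashv\eta_X$, and the choice $\alpha(x,k)=x$ yields $\alpha\cdot\eta_X=1_X$, so by Theorem \ref{th:KZ} $\alpha$ is an algebra structure.

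For (ii)~$\Leftrightarrow$~(iii): the core is the identity
\[
[\sigma_X(x,r),(1_X)_*](*,y)\;=\;\hom\big(r,X(x,y)\big)\qquad(y\in X).
\]
Using $\sigma_X(x,r)(x')=X(x',x)\otimes r$ (computed in the proof of Proposition \ref{prop:Bbmonadmorphismff}) and the pointwise formula for $[-,-]$, the left-hand side equals $\bigwedge_{x'\in X}\hom\big(X(x',x)\otimes r,\;X(x',y)\big)$; the inequality ``$\le$'' comes from the term $x'=x$ together with $k\le X(x,x)$, while ``$\ge$'' follows from the counit $r\otimes\hom(r,w)\le w$ and transitivity~(T). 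Granting this identity, a colimit of the displayed diagram is precisely an element $z\in X$ with $z_*=[\sigma_X(x,r),(1_X)_*]$, i.e.\ with $X(z,y)=\hom(r,X(x,y))$ for all $y\in X$, which is exactly an element $x\oplus r$ as in (ii).

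Everything reduces to elementary manipulations in $V$ and $\VDist$; the only delicate point is the construction of $\alpha$ in (ii)~$\Rightarrow$~(i), where the witnesses $x\oplus r$ must be chosen so that $\alpha\cdot\eta_X$ is \emph{strictly} the identity (the adjunction $\alpha\dashv\eta_X$ together with full faithfulness of $\eta_X$ only gives $\alpha\cdot\eta_X\simeq 1_X$, which is insufficient when $X$ is not separated). Setting $x\oplus k:=x$ is the single choice that removes this issue, and I expect it to be the only subtlety in the argument.
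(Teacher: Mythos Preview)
Your proposal is correct and matches the paper's argument in substance: the paper proves the cycle (i)$\Rightarrow$(ii)$\Rightarrow$(iii)$\Rightarrow$(i) rather than your two biconditionals, but the ingredients are identical---the adjunction $\alpha\dashv\eta_X$ unpacked via Remark~\ref{rem:adjcond}, the computation $[\sigma_X(x,r),(1_X)_*](*,y)=\hom(r,X(x,y))$, and the deliberate choice $x\oplus k:=x$ to force $\alpha\cdot\eta_X=1_X$ strictly. Your explicit verification of that key identity (via $x'=x$ for one inequality and the counit plus transitivity for the other) spells out what the paper leaves as ``the calculus of the distributor shows''.
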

\begin{proof}
(i) $\Rightarrow$ (ii): The adjunction $\alpha\dashv\eta_X$ gives, via Remark \ref{rem:adjcond},
\[X(\alpha(x,r),y)=\Bb X((x,r),(y,k))=\hom(r,X(x,y)).\]
For $x\oplus r:=\alpha(x,r)$, condition (ii) follows.\\

(ii) $\Rightarrow$ (iii): The calculus of the distributor $[\sigma_X(x,r),(1_X)_*]$ shows that it is represented by $x\oplus r$:
\[ [\sigma_X(x,r),(1_X)_*](*,y)=\hom(r,X(x,y)).\]

(iii) $\Rightarrow$ (i) For each $(x,r)\in \Bb X$, let $x\oplus r$ represent $[\sigma_X(x,r),(1_X)_*]$. In case $r=k$, we choose $x\oplus k=x$ to represent the corresponding distributor (any $x'\simeq x$ would fit here but $x$ is the right choice for our purpose). Then $\alpha\colon\Bb X\to X$ defined by $\alpha(x,r)=x\oplus r$ is, by construction, left adjoint to $\eta_X$, and $\alpha\cdot\eta_X=1_X$.
\end{proof}
The $V$-categories $X$ satisfying (iii), and therefore satisfying the above (equivalent) conditions, are called \emph{tensored}.
 This notion was originally introduced in the article \cite{BK75} by Borceux and Kelly for general $V$-categories (for our special $V$-categories we suggest to consult \cite{St04}).\\

 Note that, thanks to condition (ii), we get the following characterisation of tensored categories.

 \begin{corollary}\label{cor:oplus}
 A $V$-category $X$ is tensored if, and only if, for every $x\in X$,
 \[X\adjunct{x\oplus -}{X(x,-)}V\]
 is an adjunction in $\VCat$.
 \end{corollary}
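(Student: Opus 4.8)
The plan is to prove Corollary \ref{cor:oplus} by unwinding condition (ii) of Proposition \ref{prop:Balg} and recognizing it, via Remark \ref{rem:adjcond}, as an adjunction in $\VCat$. First I would fix $x\in X$ and look at the two maps we want to be adjoint: the map $x\oplus-\colon V\to X$ (sending $r\mapsto x\oplus r$), and the map $X(x,-)\colon X\to V$ (sending $y\mapsto X(x,y)$). Note that here $V$ is being regarded as a $V$-category via $\hom$, as recalled early in Section 1, and that $X(x,-)$ really is a $V$-functor precisely because $X(x,x')\leq \hom(X(x,y),X(x',y))$ follows from the transitivity axiom (T)---so that half of the claim is essentially automatic. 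The heart of the matter is the adjointness, which I would establish by the criterion in Remark \ref{rem:adjcond}: a pair of maps $f\colon A\to B$, $g\colon B\to A$ with $B(f(a),b)=A(a,g(b))$ for all $a,b$ is automatically an adjunction $f\dashv g$ of $V$-functors.

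Next I would apply this criterion with $A=V$, $B=X$, $f=x\oplus-$, and $g=X(x,-)$. The required identity is then
\[
X\bigl(x\oplus r,\,y\bigr)\;=\;V\bigl(r,\,X(x,y)\bigr)\;=\;\hom\bigl(r,X(x,y)\bigr),
\]
for all $r\in V$ and $y\in X$. But this is exactly the defining property of $x\oplus r$ in condition (ii) of Proposition \ref{prop:Balg}. So if $X$ is tensored, condition (ii) holds, the displayed equality holds, and Remark \ref{rem:adjcond} delivers the adjunction $x\oplus-\dashv X(x,-)$ in $\VCat$ for every $x$. Conversely, if for every $x\in X$ there is such an adjunction $x\oplus-\dashv X(x,-)$, then Remark \ref{rem:adjcond} gives the same displayed identity $X(x\oplus r,y)=\hom(r,X(x,y))$, which is precisely condition (ii); by Proposition \ref{prop:Balg} this means $X$ is tensored. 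This closes the equivalence.

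I expect no serious obstacle here: the corollary is genuinely a reformulation, and all the work has already been done in Proposition \ref{prop:Balg} and Remark \ref{rem:adjcond}. The one point that requires a word of care is the identification of the right adjoint as literally the $V$-functor $X(x,-)\colon X\to V$ rather than merely \emph{some} right adjoint---one should note that Remark \ref{rem:adjcond} guarantees both that $X(x,-)$ is a $V$-functor (so the statement even makes sense) and that it is \emph{the} right adjoint to $x\oplus-$, since right adjoints in a 2-category are determined up to isomorphism and here we have the sharp equality $X(x\oplus r,y)=\hom(r,X(x,y))$ on the nose. A second small subtlety is the normalization choice $x\oplus k=x$: it is not needed for the corollary (any representative works), but it is harmless and consistent with the convention fixed in the proof of Proposition \ref{prop:Balg}. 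Beyond that, the proof is a direct substitution, so I would keep it to a few lines invoking Proposition \ref{prop:Balg}(ii) and Remark \ref{rem:adjcond} in each direction.
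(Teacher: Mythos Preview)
Your proposal is correct and matches the paper's approach: the paper simply remarks that the corollary follows from condition (ii) of Proposition \ref{prop:Balg}, and you fill in precisely those details by invoking Remark \ref{rem:adjcond} to translate the identity $X(x\oplus r,y)=\hom(r,X(x,y))$ into the adjunction $(x\oplus-)\dashv X(x,-)$. One small slip in your aside: the $V$-functoriality of $X(x,-)$ requires $X(y,y')\leq\hom(X(x,y),X(x,y'))$, not the inequality you wrote---but since you correctly note that Remark \ref{rem:adjcond} delivers functoriality for free, this does not affect the argument.
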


We now shift our attention  to the formal ball monad $\BB.$
The characterisation of $\BBb$-algebras given by the Proposition \ref{prop:Balg} may be adapted to obtain a characterisation of $\BB$-algebras. Indeed, the only difference is that a $\BB$-algebra structure $BX\to X$ does not include the existence of $x\oplus\bot$ for $x\in X$, which, when it exists, is the top element with respect to the order in $X$. Moreover, the characterisation of $\BB$-algebras given in \cite[Proposition 3.4]{GL19} can readily be generalised to $\VCat$ as follows.

\begin{prop}
For a $V$-functor $\alpha\colon BX\to X$ the following conditions are equivalent.
\begin{tfae}
\item $\alpha$ is a $\BB$-algebra structure.
\item For every $x\in X$, $r,s\in V\setminus\{\bot\}$, $\alpha(x,k)=x$ and $\alpha(x,r\otimes s)=\alpha(\alpha(x,r),s)$.
\item For every $x\in X$, $r\in V\setminus\{\bot\}$, $\alpha(x,k)=x$ and $X(x,\alpha(x,r))\geq r$.
\item For every $x\in X$, $\alpha(x,k)=x$.
\end{tfae}
\end{prop}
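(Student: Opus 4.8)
The plan is to prove the cyclic chain of implications (i) $\Rightarrow$ (ii) $\Rightarrow$ (iii) $\Rightarrow$ (iv) $\Rightarrow$ (i), exploiting the general machinery for lax idempotent monads (Theorem \ref{th:KZ}) together with the explicit description of $B X$ as the subspace of $\Bb X$ on balls of non-$\bot$ radius. Throughout, recall that $\BB$ is lax idempotent (Proposition \ref{prop:Bbmonadmorphismff}), so by Theorem \ref{th:KZ} a $V$-functor $\alpha\colon BX\to X$ is a $\BB$-algebra structure if and only if $\alpha\cdot\eta_X=1_X$, i.e.\ if and only if $\alpha(x,k)=x$ for all $x\in X$; this is precisely condition (iv). Hence (i) $\Leftrightarrow$ (iv) is essentially immediate from the earlier results, and the substance of the proof is showing that (iv) forces the multiplicativity/monotonicity conditions (ii) and (iii) to hold automatically — this is the genuinely monad-specific content and the step I expect to require the most care.

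First I would record (i) $\Rightarrow$ (ii): if $\alpha$ is an algebra structure then $\alpha(x,k)=x$ from the unit law, and the associativity law $\alpha\cdot\mu_X=\alpha\cdot B\alpha$ evaluated at $((x,r),s)\in BBX$ gives, using $\mu_X((x,r),s)=(x,r\otimes s)$ and $B\alpha((x,r),s)=(\alpha(x,r),s)$, exactly $\alpha(x,r\otimes s)=\alpha(\alpha(x,r),s)$. For (ii) $\Rightarrow$ (iii), assume $\alpha(x,k)=x$ and $\alpha(x,r\otimes s)=\alpha(\alpha(x,r),s)$. Since $\alpha$ is a $V$-functor, $BX((x,k),(x,r))\le X(\alpha(x,k),\alpha(x,r))=X(x,\alpha(x,r))$, and $BX((x,k),(x,r))=\hom(k,X(x,x)\otimes r)=X(x,x)\otimes r\ge k\otimes r=r$ by (R); this yields $X(x,\alpha(x,r))\ge r$. (Note this half does not even use the multiplicativity hypothesis, only functoriality and the unit equation, which already foreshadows why (iv) alone will suffice.) The implication (iii) $\Rightarrow$ (iv) is trivial since (iv) is literally the first clause of (iii).

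The main work is (iv) $\Rightarrow$ (i). Given a $V$-functor $\alpha\colon BX\to X$ with $\alpha(x,k)=x$ for all $x$, I must show $\alpha$ is an algebra structure, equivalently (by Theorem \ref{th:KZ}, lax idempotency) that $\alpha\dashv\eta_X$, equivalently that $\alpha_*=(\eta_X)^*$, equivalently $X(\alpha(x,r),y)=BX((x,r),(y,k))=\hom(r,X(x,y))$ for all $(x,r)\in BX$ and $y\in X$. The inequality ``$\le$'' follows from $V$-functoriality of $\alpha$: $X(\alpha(x,r),\alpha(y,k))\ge BX((x,r),(y,k))$ and $\alpha(y,k)=y$. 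For the reverse inequality ``$\ge$'', i.e.\ $\hom(r,X(x,y))\le X(\alpha(x,r),y)$, the key observation is that $\eta_X\colon X\to BX$ is fully faithful (it is, since $BX((x,k),(y,k))=\hom(k,X(x,y)\otimes k)=X(x,y)$, and indeed $\sigma\colon\BBb\to\PP$ is pointwise fully faithful by Proposition \ref{prop:Bbmonadmorphismff}), so the adjunction $\alpha\dashv\eta_X$ with $\alpha\cdot\eta_X=1_X$ follows once we know $\alpha$ is left adjoint to $\eta_X$; and for a lax idempotent monad, a right inverse of $\eta_X$ is automatically such a left adjoint. Concretely: lax idempotency gives $B\eta_X\dashv\mu_X$; applying $\alpha$ and using $\alpha(x,k)=x$ (so $\alpha\cdot\eta_X=1_X$ and hence, by naturality of $\eta$, $B\alpha\cdot B\eta_X=1_{BX}$) together with the universal property of the adjunction $B\eta_X\dashv\mu_X$ identifies $\alpha$ with the composite $\alpha = \mathrm{(right\ inverse)}$ that is forced to be left adjoint to $\eta_X$. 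This is exactly the content of the implication (iii) $\Rightarrow$ (i) in Theorem \ref{th:KZ}(1), so in fact (iv) $\Rightarrow$ (i) is an instance of that theorem once one checks the hypothesis ``$\alpha\cdot\eta_X=1_X$'' is satisfied — which is precisely (iv). The one subtlety to watch is that $\BB$ restricts $\BBb$ to radii $\neq\bot$, so one must confirm that $\eta_X$, $\mu_X$ and the composite $B\alpha\cdot B\eta_X$ all land in $BX$ (they do: $k\neq\bot$ by the quantale axioms, and $r\otimes s\neq\bot$ whenever $r,s\neq\bot$ is part of what makes $\BB$ a submonad), so the earlier general results apply verbatim to $\BB$.

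Thus the only real obstacle is verifying that Theorem \ref{th:KZ}(1)(iii)$\Rightarrow$(i) — that any $V$-functorial section of $\eta_X$ on a lax idempotent monad is an algebra — applies here; once that is in hand the rest is the short computations above. I would present (iv) $\Rightarrow$ (i) by simply invoking Theorem \ref{th:KZ}, and spell out (i) $\Rightarrow$ (ii) $\Rightarrow$ (iii) $\Rightarrow$ (iv) with the one-line calculations indicated, noting in particular that (ii) $\Rightarrow$ (iii) uses only functoriality and reflexivity (R), not the full multiplicativity, which is the reason the weak-looking condition (iv) already implies everything.
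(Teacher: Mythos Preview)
Your proof is correct and follows essentially the same strategy as the paper: both rely on Theorem~\ref{th:KZ} (lax idempotency of $\BB$) to get (i) $\Leftrightarrow$ (iv) for free, treat (i) $\Leftrightarrow$ (ii) as the definition of an Eilenberg--Moore algebra, and regard (iii) $\Rightarrow$ (iv) as trivial.

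The one substantive difference is in how you obtain the inequality $X(x,\alpha(x,r))\geq r$. The paper derives it from (i) by invoking the adjunction $\alpha(x,-)\dashv X(x,-)$ (Corollary~\ref{cor:oplus}), rewriting the inequality as $k\leq\hom(r,X(x,\alpha(x,r)))=X(\alpha(x,r),\alpha(x,r))$. You instead derive it from (ii) --- really just from (iv) plus $V$-functoriality of $\alpha$ --- via the direct computation $r\leq X(x,x)\otimes r=\hom(k,X(x,x)\otimes r)=BX((x,k),(x,r))\leq X(\alpha(x,k),\alpha(x,r))=X(x,\alpha(x,r))$. Your route is slightly more elementary (it does not pass through the adjunction) and, as you observe, makes transparent that the multiplicativity clause in (ii) is never used for this step. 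The paper's route, on the other hand, ties the inequality explicitly to the tensored/adjunction picture that is used repeatedly in the surrounding material. Either argument is perfectly adequate here.
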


\begin{proof}
By definition of $\BB$-algebra, (i) $\Leftrightarrow$ (ii), while (i) $\Leftrightarrow$ (iv) follows from Theorem \ref{th:KZ}, since $\BB$ is lax-idempotent.
(iii) $\Rightarrow$ (iv) is obvious, and so it remains to prove that, if $\alpha$ is a $\BB$-algebra structure, then $X(x,\alpha(x,r))\geq r$, for $r\neq\bot$. But
\[X(x,\alpha(x,r))\geq r\;\Leftrightarrow\; k\leq\hom(r,X(x,\alpha(x,r))=X(\alpha(x,r),\alpha(x,r)),\]
because $\alpha(x,-)\dashv X(x,-)$ by Corollary \ref{cor:oplus}.
\end{proof}

Since we know that, if $X$ has a $\BB$-algebra structure $\alpha$, then $\alpha(x,r)=x\oplus r$, we may state the conditions above as follows.

\begin{corollary}\label{cor:condition}
If $\xymatrix{BX\ar[r]^{-\oplus-}&X}$ is a $\BB$-algebra structure, then, for $x\in X$, $r,s\in V\setminus\{\bot\}$:
\begin{enumerate}
\item $x\oplus k=x$;
\item $x\oplus(r\otimes s)=(x\oplus r)\oplus s$;
\item $X(x,x\oplus r)\geq r$.
\end{enumerate}
\end{corollary}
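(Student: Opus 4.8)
The statement to prove is Corollary \ref{cor:condition}, which follows directly from the preceding Proposition. Let me sketch the proof.

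\textbf{The plan.} The corollary is an immediate consequence of the Proposition that precedes it, once we recall the identification $\alpha(x,r)=x\oplus r$ established earlier (in the discussion of tensored $V$-categories, via Proposition \ref{prop:Balg}). So the strategy is simply to translate each of the three numbered claims into the language of the Proposition and read off the conclusion.

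\textbf{Step 1.} For item (1): the condition $x\oplus k=x$ is literally the statement $\alpha(x,k)=x$, which appears in conditions (ii)--(iv) of the Proposition. Since we are assuming $\alpha=-\oplus-$ is a $\BB$-algebra structure, condition (iv) gives this for every $x\in X$ (no restriction on radii needed, and indeed $k\neq\bot$ since $k\neq\bot$ in any quantale).

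\textbf{Step 2.} For item (2): the condition $x\oplus(r\otimes s)=(x\oplus r)\oplus s$ for $r,s\in V\setminus\{\bot\}$ is exactly the identity $\alpha(x,r\otimes s)=\alpha(\alpha(x,r),s)$ from condition (ii) of the Proposition. Since a $\BB$-algebra structure satisfies (i), hence (ii), this holds. (One should note that $r\otimes s\neq\bot$ when $r,s\neq\bot$ is not automatic for a general quantale, but $\Bb X$ only carries radii in $V\setminus\{\bot\}$ by definition of $\BB$; if $r\otimes s=\bot$ the expression $x\oplus(r\otimes s)$ is not defined, so the identity is to be read as holding whenever both sides make sense — or one restricts to quantales where this is not an issue. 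In any case the content is precisely condition (ii).)

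\textbf{Step 3.} For item (3): the inequality $X(x,x\oplus r)\geq r$ for $r\neq\bot$ is condition (iii) of the Proposition, which holds since (i) $\Rightarrow$ (iii) was shown there (using $\alpha(x,-)\dashv X(x,-)$ from Corollary \ref{cor:oplus}).

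\textbf{Main obstacle.} There is essentially no obstacle: the corollary is a restatement of the Proposition under the notational convention $\alpha(x,r)=x\oplus r$. The only point requiring a word of care is the well-definedness issue in item (2) mentioned above, namely that $\Bb X$ is closed under the relevant operation precisely because its radii are constrained to $V\setminus\{\bot\}$; this is why the hypotheses restrict $r,s$ to $V\setminus\{\bot\}$.

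\begin{proof}
This is a direct reformulation of the preceding Proposition. Recall that, as noted before Corollary \ref{cor:oplus}, if $X$ carries a $\BB$-algebra structure $\alpha$, then $\alpha(x,r)$ is the representative of $[\sigma_X(x,r),(1_X)_*]$, which is precisely $x\oplus r$; so the structure map is $-\oplus-$ and $x\oplus r=\alpha(x,r)$ for all $(x,r)\in BX$.

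Item (1) is condition (iv) of the Proposition: for every $x\in X$, $x\oplus k=\alpha(x,k)=x$ (here $k\neq\bot$, so $(x,k)\in BX$).

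Item (2) is condition (ii) of the Proposition: for $r,s\in V\setminus\{\bot\}$ (so that the pairs $(x,r)$, $(x\oplus r,s)$ and, whenever the expression is meaningful, $(x,r\otimes s)$ lie in $BX$),
\[
x\oplus(r\otimes s)=\alpha(x,r\otimes s)=\alpha(\alpha(x,r),s)=(x\oplus r)\oplus s.
\]

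Item (3) is condition (iii) of the Proposition: for $r\in V\setminus\{\bot\}$,
\[
X(x,x\oplus r)=X(x,\alpha(x,r))\geq r,
\]
which was established in the proof of the Proposition using the adjunction $\alpha(x,-)\dashv X(x,-)$ of Corollary \ref{cor:oplus}, via the equivalence $X(x,\alpha(x,r))\geq r\Leftrightarrow k\leq\hom(r,X(x,\alpha(x,r)))=X(\alpha(x,r),\alpha(x,r))$.
\end{proof}
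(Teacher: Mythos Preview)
Your proposal is correct and follows exactly the paper's approach: the corollary is stated immediately after the Proposition with only the remark that, since a $\BB$-algebra structure $\alpha$ satisfies $\alpha(x,r)=x\oplus r$, the conditions of the Proposition may be rewritten in the $\oplus$-notation. Your observation about the potential issue with $r\otimes s=\bot$ in item~(2) is a fair point that the paper leaves implicit (it tacitly assumes $V\setminus\{\bot\}$ is closed under $\otimes$, as is needed for $\BB$ to be a monad in the first place).
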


\begin{lemma}
Let $X$ and  $Y$ be $V$-categories equipped with $\BB$-algebra structures $\xymatrix{BX\ar[r]^{-\oplus-}&X}$ and $\xymatrix{BY\ar[r]^{-\oplus-}&Y}$. Then a map $f: X \rightarrow Y$ is a $V$-functor if and only if $$ f \textrm{ is monotone and }  f(x) \oplus r \leq f(x \oplus r) ,$$ for all $(x,r) \in BX$.
\end{lemma}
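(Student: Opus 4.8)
The statement is an ``if and only if'' characterising $V$-functoriality of a map $f\colon X\to Y$ between $\BB$-algebras in terms of monotonicity plus a lax compatibility with the $\oplus$ operations. The plan is to unwind the $V$-functoriality condition (C), $X(x,x')\le Y(f(x),f(x'))$, and compare it with the two conditions on the right-hand side.

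For the forward implication, assume $f$ is a $V$-functor. Monotonicity of $f$ (with respect to the underlying orders) is immediate since $\le$ on each $V$-category is defined from the $V$-structure and (C). For the inequality $f(x)\oplus r\le f(x\oplus r)$, recall from Corollary \ref{cor:condition}(3) that $X(x,x\oplus r)\ge r$, hence $Y(f(x),f(x\oplus r))\ge X(x,x\oplus r)\ge r$ by (C). On the other hand, by Corollary \ref{cor:oplus} the $\BB$-algebra structure on $Y$ gives the adjunction $f(x)\oplus -\dashv Y(f(x),-)$, so $f(x)\oplus r\le f(x\oplus r)$ is equivalent to $r\le Y(f(x),f(x\oplus r))$ (cf.\ Remark \ref{rem:adjcond}), which we have just verified. (One should be mildly careful at $r=\bot$ if working inside $\BBb$, but here we are in $\BB$, where radii $\bot$ are excluded, or else note $f(x)\oplus\bot$ is the top element and the inequality is trivial.)

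For the converse, assume $f$ is monotone and $f(x)\oplus r\le f(x\oplus r)$ for all $(x,r)\in BX$. The key move is to take $r=X(x,x')$ for a fixed pair $x,x'\in X$: using condition (ii) of Proposition \ref{prop:Balg} (or Corollary \ref{cor:condition}), $x\oplus X(x,x')$ satisfies $X(x\oplus X(x,x'),x')=\hom(X(x,x'),X(x,x'))\ge k$, i.e.\ $x\oplus X(x,x')\le x'$ in the order of $X$. Applying monotonicity of $f$ gives $f(x\oplus X(x,x'))\le f(x')$, hence by hypothesis $f(x)\oplus X(x,x')\le f(x\oplus X(x,x'))\le f(x')$ in $Y$. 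Now apply the adjunction $f(x)\oplus-\dashv Y(f(x),-)$ on $Y$ once more: $f(x)\oplus X(x,x')\le f(x')$ translates to $X(x,x')\le Y(f(x),f(x'))$, which is exactly (C).

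The main obstacle is essentially bookkeeping with the adjunction $x\oplus-\dashv X(x,-)$ in $\VCat$: one must be careful that the order-theoretic inequalities ``$x\oplus r\le x'$'' and ``$r\le X(x,x')$'' are genuinely equivalent via this adjunction (this is the content of Remark \ref{rem:adjcond} applied to the one-object $V$-category $E$), and that picking $r=X(x,x')$ is a legitimate choice of radius in $BX$ (it is, provided $X(x,x')\neq\bot$; if $X(x,x')=\bot$ the inequality (C) is vacuous). Assembling these pieces, both directions follow without further computation.
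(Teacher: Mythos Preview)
Your proof is correct and the converse direction is essentially identical to the paper's argument: take $r=X(x,x')$, use the adjunction $(x\oplus-)\dashv X(x,-)$ to get $x\oplus X(x,x')\le x'$, apply monotonicity and the hypothesis, and then translate back via the adjunction on $Y$.

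The only real difference is in the forward direction. The paper obtains $f(x)\oplus r\le f(x\oplus r)$ in one line by invoking Theorem~\ref{th:KZ}(2), which says that for any lax idempotent monad every $V$-functor between algebras is automatically a lax morphism, i.e.\ $\beta\cdot Tf\le f\cdot\alpha$. You instead give a direct argument: $r\le X(x,x\oplus r)\le Y(f(x),f(x\oplus r))$ by Corollary~\ref{cor:condition}(3) and (C), and then the adjunction $f(x)\oplus-\dashv Y(f(x),-)$ yields the desired inequality. Your route is more self-contained and avoids appealing to the general theory of lax idempotent monads; the paper's route is shorter and highlights that this half of the lemma is an instance of a general phenomenon rather than something specific to the ball monad. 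Your handling of the edge case $X(x,x')=\bot$ is also a useful observation that the paper leaves implicit.
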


\begin{proof}
Assume that $f$ is a $V$-functor. Then it is, in particular, monotone, and, from Theorem \ref{th:KZ} we know that $f(x)\oplus r\leq f(x\oplus r)$.

Conversely, assume that $f$ is monotone and that $f(x) \oplus r \leq f(x \oplus r),$ for all $(x,r) \in BX $.
Let $x,x' \in X$. Then $x\oplus X(x,x')\leq x'$ since $(x\oplus -)\dashv X(x,-)$ by Corollary \ref{cor:oplus}, and then
\begin{align*}
f(x)\oplus X(x,x')&\leq f(x\oplus X(x,x'))&\mbox{(by hypothesis)}\\
&\leq f(x')&\mbox{(by monotonicity of $f$).}
\end{align*}
Now, using the adjunction $ f(x)\oplus - \dashv Y(f(x),-) )$, we conclude that \[X(x,x') \leq Y(f(x),f(x')).\] \end{proof}

The following results are now immediate:

\begin{corollary}
\begin{enumerate}
\item Let $(X,\oplus), (Y,\oplus)$ be $\BB$-algebras. Then a map $f\colon X \rightarrow Y$ is a $\BB$-algebra morphism if and only if, for all $(x,r) \in BX$, \[f \textrm{ is monotone and }  f(x \oplus r)= f(x) \oplus r.\]
\item Let $(X,\oplus), (Y,\oplus)$ be $\BB$-algebras. Then a $V$-functor $f\colon X \rightarrow Y$ is a $\BB$-algebra morphism if and only if, for all $(x,r) \in BX$, \[f(x \oplus r)\leq f(x) \oplus r.\]
\end{enumerate}
\end{corollary}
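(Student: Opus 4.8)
The plan is to derive both equivalences directly from the Lemma immediately preceding the statement together with Theorem~\ref{th:KZ}(2), so essentially no new computation is required. Throughout, write $\alpha\colon BX\to X$ and $\beta\colon BY\to Y$ for the two $\BB$-algebra structures, so that $\alpha(x,r)=x\oplus r$ and $\beta(y,r)=y\oplus r$; by definition a $\BB$-algebra morphism $f\colon(X,\oplus)\to(Y,\oplus)$ is a $V$-functor with $f\cdot\alpha=\beta\cdot Bf$, which unwinds to exactly $f(x\oplus r)=f(x)\oplus r$ for all $(x,r)\in BX$.

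For part (1), the forward implication is immediate: a $\BB$-algebra morphism is in particular a $V$-functor, hence monotone, and satisfies the displayed equality by the previous sentence. For the converse, I would note that if $f$ is monotone and $f(x\oplus r)=f(x)\oplus r$, then in particular $f(x)\oplus r\leq f(x\oplus r)$, so the preceding Lemma applies and shows that $f$ is a $V$-functor; combined with the assumed equality, this makes $f$ a $\BB$-algebra morphism.

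For part (2), the forward implication is again trivial. For the converse the only point is the reverse inequality $f(x)\oplus r\leq f(x\oplus r)$: this holds for \emph{every} $V$-functor between $\BB$-algebras, being exactly Theorem~\ref{th:KZ}(2) applied to the lax-idempotent monad $\BB$ (with $\beta\cdot Bf(x,r)=f(x)\oplus r$ and $f\cdot\alpha(x,r)=f(x\oplus r)$); it was in fact already recorded inside the proof of the preceding Lemma. Hence the hypothesis $f(x\oplus r)\leq f(x)\oplus r$ upgrades to an equality, so $f$ is a $\BB$-algebra morphism.

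There is no genuine obstacle here; the result is immediate once the preceding Lemma and the lax-idempotency inequality of Theorem~\ref{th:KZ}(2) are available. The only thing to be careful about is to \emph{invoke} that inequality rather than re-deriving it, so that the converse in part (2) does not inadvertently presuppose that $f$ is already an algebra morphism.
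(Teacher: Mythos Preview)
Your proposal is correct and matches the paper's own treatment: the paper states this corollary with the phrase ``The following results are now immediate'' and gives no proof, and the argument you supply (forward directions by definition, converse of (1) via the preceding Lemma, converse of (2) via Theorem~\ref{th:KZ}(2)) is exactly the intended one-line derivation.
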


\begin{example}
If $X\subseteq\,[0,\infty]$, with the $V$-category structure inherited from $\hom$, then
\begin{enumerate}
\item $X$ is a $\BBb$-algebra if, and only if, $X=[a,b]$ for some $a,b\in\,[0,\infty]$.
\item $X$ is a $\BB$-algebra if, and only if, $X=\,]a,b]$ or $X=[a,b]$ for some $a,b\in\,[0,\infty]$.
\end{enumerate}
Let $X$ be a $\BBb$-algebra. From Proposition \ref{prop:Balg} one has
\[(\forall x\in X)\;(\forall r\in\,[0,\infty])\;(\exists x\oplus r\in X)\;(\forall y\in X)\;\;y\ominus (x\oplus r)=(y\ominus x)\ominus r=y\ominus (x+r).\]
This implies that, if $y\in X$, then $y>x\otimes r\;\Leftrightarrow\;y>x+r$. Therefore, if $x+r\in X$, then $x\oplus r=x+r$, and, moreover, $X$ is an interval: given $x,y,z\in\,[0,\infty]$ with $x<y<z$ and $x,z\in X$, then, with $r=y-x\in\,[0,\infty]$, $x+r=y$ must belong to $X$:
\[z\ominus(x\oplus r)=z-(x+r)=z-y>0\;\Rightarrow\;z\ominus(x\oplus r)=z-(x\oplus r)=z-y\;\Leftrightarrow\; y=x\oplus r\in X.\]
In addition, $X$ must have bottom element (that is a maximum with respect to the classical order of the real half-line): for any $x\in X$ and $b=\sup X$, $x\oplus(b-x)=\sup\{z\in X\,;\,z\leq b\}=b\in X$. For $r=\infty$ and any $x\in X$, $x\oplus\infty$ must be the top element of $X$, so $X=[a,b]$ for $a,b\in\,[0,\infty]$.

Conversely, if $X=]a,b]$, for $x\in X$ and $r\in\,[0,\infty[$, define $x\oplus r=x+r$ if $x+r\in X$ and $x\oplus r=b$ elsewhere. It is easy to check that condition (ii) of Proposition \ref{prop:Balg} is satisfied for $r\neq\infty$.

Analogously, if $X=[a,b]$, for $x\in X$ and $r\in\,[0,\infty]$, we define $x\oplus r$ as before in case $r\neq\infty$ and $x\oplus\infty=a$.
\end{example}

As we will see, (co)restricting $\BB$ to $\VCat_\sep$ will allows us to obtain some interesting results.   Unfortunately  $X$ being separated does not entail $BX$ being so. Because of this we will need to restrict our attention to the \textit{cancellative} quantales which we define and characterize next.

\begin{definition}
A quantale $V$ is said to be \emph{cancellative} if
\begin{equation}\label{eq:canc}
\forall r,s \in V,\, r\neq \bot :\  r=s \otimes r  \ \Rightarrow \  s=k.
\end{equation}
\end{definition}
\begin{remark}
We point out that this notion of cancellative quantale does not coincide with the notion of cancellable ccd quantale introduced in \cite{CH17}. On the one hand cancellative quantales are quite special, since, for instance, when $V$ is a locale, and so with $\otimes=\wedge$ is a quantale, $V$ is not cancellative since condition \eqref{eq:canc} would mean, for $r\neq\bot$, $r=s\wedge r\;\Rightarrow\;s=\top$. On the other hand, $[0,1]_\odot$, that is $[0,1]$ with the usual order and having as tensor product the \L{}ukasiewicz sum, is cancellative but not cancellable.
In addition we remark that every \emph{value quantale} \cite{KW11} is cancellative.
\end{remark}

\begin{prop}\label{prop:canc}
Let $V$ be an integral quantale. The following assertions are equivalent:
\begin{tfae}
\item $BV$ is separated;
\item $V$ is cancellative;
\item If $X$ is separated then $BX$ is separated.
\end{tfae}
\end{prop}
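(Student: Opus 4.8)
The plan is to prove the cycle (ii) $\Rightarrow$ (iii) $\Rightarrow$ (i) $\Rightarrow$ (ii). The preliminary observation, used throughout, is that integrality of $V$ means $k=\top$, so that $u\le k$, $u\otimes k=u=k\otimes u$, and $\hom(u,w)\ge k\Leftrightarrow u\le w$ for all $u,w\in V$. In particular, for a $V$-category $X$ and formal balls $(x,r),(y,s)\in BX$ (so $r,s\neq\bot$), the relation $(x,r)\le(y,s)$ in $BX$ — that is, $\hom(r,X(x,y)\otimes s)\ge k$ — is equivalent to $r\le X(x,y)\otimes s$; hence $(x,r)\simeq(y,s)$ in $BX$ if and only if $r\le X(x,y)\otimes s$ and $s\le X(y,x)\otimes r$.

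For (ii) $\Rightarrow$ (iii) I would assume $V$ cancellative and $X$ separated and take $(x,r)\simeq(y,s)$ in $BX$. Since $X(x,y)\le k$, the first inequality forces $r\le X(x,y)\otimes s\le s$, and symmetrically $s\le r$, so $r=s$; then $r\le X(x,y)\otimes r\le r$ gives $r=X(x,y)\otimes r$, and, $r$ being $\neq\bot$, cancellativity yields $X(x,y)=k$, and likewise $X(y,x)=k$. Thus $x\simeq y$ in $X$, so $x=y$ since $X$ is separated, whence $(x,r)=(y,s)$ and $BX$ is separated. The implication (iii) $\Rightarrow$ (i) is immediate: $(V,\hom)$ is separated, so (iii) applied to $X=(V,\hom)$ gives that $BV$ is separated.

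For (i) $\Rightarrow$ (ii) I would argue by contraposition: if $V$ is not cancellative, pick $r\neq\bot$ and $s\neq k$ with $r=s\otimes r$, and consider in $BV=B(V,\hom)$ the distinct points $(k,r)$ and $(s,r)$. Using $\hom(k,s)=s$ and $s\otimes r=r$ one finds $BV((k,r),(s,r))=\hom(r,\hom(k,s)\otimes r)=\hom(r,s\otimes r)=\hom(r,r)\ge k$, while $\hom(s,k)=\top=k$ gives $BV((s,r),(k,r))=\hom(r,\hom(s,k)\otimes r)=\hom(r,k\otimes r)=\hom(r,r)\ge k$. So $(k,r)\simeq(s,r)$ in $BV$ although $(k,r)\neq(s,r)$, and $BV$ is not separated, which is what had to be shown.

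I do not expect a serious obstacle: once integrality is used the computations are routine. The one load-bearing point is the radius restriction distinguishing $\BB$ from $\BBb$ — it is exactly because radii must be $\neq\bot$ that cancellativity applies in the step $r=X(x,y)\otimes r\Rightarrow X(x,y)=k$, and, dually, the freedom to choose a nonzero $r$ with $r=s\otimes r$ is what makes the pair $(k,r),(s,r)$ a counterexample in $BV$; identifying that this is the right witness pair is the only step that takes a moment's thought.
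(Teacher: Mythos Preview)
Your proof is correct and follows essentially the same cycle and the same computations as the paper's own proof; the only cosmetic differences are that you phrase (i)$\Rightarrow$(ii) contrapositively rather than directly, and you compute $BV((s,r),(k,r))$ by observing $\hom(s,k)=\top=k$ straightaway (the paper instead substitutes $r=s\otimes r$ before reaching the same conclusion).
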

\begin{proof}
(i) $\Rightarrow$ (ii): Let $ r,s \in V,\, r\neq \bot $ and $  r=s \otimes r$. Note that \[ BV((k,r),(s,r))=\hm{r}{\hm{k}{s}\otimes r}=\hm{r}{s \otimes r}=\hm{r}{r}=k\] and \[BV((s,r),(k,r))=\hm{r}{\hm{s}{k}\otimes r}=\hm{r}{\hm{s}{k}\otimes s \otimes  r} =\hm{s \otimes r }{  s \otimes  r}=k.\] Therefore, since $BV$ is separated, $(s,r)=(k,r)$ and it follows that $s=k.$\\

(ii) $\Rightarrow $ (iii): If $(x,r)\simeq (y,s)$ in $BX$, then
\[BX((x,r),(y,s))=k \Leftrightarrow r \leq X(x,y) \otimes s, \mbox{ and }\]
\[BX((y,s),(x,r))=k \Leftrightarrow s \leq X(y,x) \otimes r.\]
Therefore $r\leq s$ and $s \leq r$, that is $r=s.$   Moreover, since $r \leq X(x,y) \otimes r \leq r$ we have that $X(x,y)=k$. Analogously, $X(y,x)=k$ and we conclude that $x=y$.\\

(iii) $\Rightarrow$ (i): Since $V$ is separated it follows immediately from (iii) that $BV$ is separated.
  \end{proof}

We can now show that $\BB$ is a submonad of $\PP$ in the adequate setting. \emph{From now on we will be working with a cancellative and integral quantale $V$, and $\BB$ will be the (co)restriction of the formal ball monad to $\VCats$.}
\begin{prop}
 Let $V$ be a cancellative and integral quantale. Then $\BB$ is a submonad of $\PP$ in $\VCats$.
\end{prop}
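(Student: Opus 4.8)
The plan is to combine the two facts already established: Proposition~\ref{prop:Bbmonadmorphismff}, which provides a pointwise fully faithful monad morphism $\sigma\colon\BBb\to\PP$ and which restricts verbatim to the submonad $\BB$, and Proposition~\ref{prop:canc}, which says that for $V$ integral and cancellative the $V$-category $BX$ is separated whenever $X$ is. Since being a submonad of $\PP$ only asks for a monad morphism whose components are embeddings, the whole point is to upgrade ``pointwise fully faithful'' to ``pointwise embedding'' using separation.

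First I would check that $\BB$ and $\PP$ genuinely (co)restrict to $\VCats$. For $\PP$ this is clear, as $PX$ is always separated. For $\BB$ it is exactly Proposition~\ref{prop:canc}: $BX\in\VCats$ whenever $X\in\VCats$, and the unit $\eta_X\colon X\to BX$ and multiplication $\mu_X\colon BBX\to BX$ are $V$-functors between separated $V$-categories, so the monad structure of $\BB$ descends. Restricting the components of $\sigma\colon\BBb\to\PP$ along the inclusion $BX\hookrightarrow\Bb X$ (the balls of radius $\neq\bot$) then gives a monad morphism $\sigma\colon\BB\to\PP$ on $\VCats$: the naturality squares and the two diagrams of \eqref{eq:monadmorphism} are inherited, since everything is given by the same formulas on a sub-$V$-category.

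It then remains to see that each $\sigma_X\colon BX\to PX$, with $X\in\VCats$, is an embedding, i.e. fully faithful \emph{and} injective on objects. Full faithfulness is proved in Proposition~\ref{prop:Bbmonadmorphismff}. For injectivity on objects, both $BX$ (by Proposition~\ref{prop:canc}) and $PX$ are separated, and a fully faithful $V$-functor between separated $V$-categories is injective on underlying sets, as recalled in Section~1. Directly: if $\sigma_X(x,r)=\sigma_X(x',r')$, then evaluating at $x$ and at $x'$ and using integrality forces $r=r'$, after which $r=X(x,x')\otimes r$ with $r\neq\bot$ gives $X(x,x')=k$ by cancellativity, and likewise $X(x',x)=k$, so $x\simeq x'$ and hence $x=x'$ by separation. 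Thus $\sigma\colon\BB\to\PP$ is a pointwise embedding of monads on $\VCats$, which is the assertion.

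As for difficulty, there is no substantial obstacle left: the real work has been isolated in Proposition~\ref{prop:canc}, the cancellativity condition being precisely what makes the separation argument, and hence the injectivity of $\sigma_X$, go through. The only point requiring a little care is confirming that passing from $\BBb$ to its submonad $\BB$ and then (co)restricting to $\VCats$ disturbs nothing, which is immediate. If one prefers a more structural route, the same conclusion follows from the implication (iv)$\Rightarrow$(i) of Theorem~\ref{th:submonad}, whose hypotheses --- lax idempotency of $\BB$, BC* for $\eta$, and full faithfulness of $\eta_X$ and of $\sigma_X=Q\eta_X\cdot\yoneda_{BX}$ --- were all verified in the course of proving Proposition~\ref{prop:Bbmonadmorphismff}, with $BX$ separated supplied by Proposition~\ref{prop:canc}.
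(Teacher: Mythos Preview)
Your proposal is correct and follows essentially the same approach as the paper: invoke Proposition~\ref{prop:Bbmonadmorphismff} for the pointwise fully faithful monad morphism $\sigma$, and then argue injectivity on objects directly by evaluating $\sigma_X(x,r)=\sigma_X(x',r')$ at $x$ and $x'$, using integrality to get $r=r'$ and cancellativity (with $r\neq\bot$) plus separation of $X$ to get $x=x'$. The paper's proof is exactly this direct computation; your additional remarks about (co)restriction to $\VCats$ and the abstract ``fully faithful between separated implies injective'' observation are sound but not needed beyond what the paper writes.
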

\begin{proof}
Thanks to Proposition \ref{prop:Bbmonadmorphismff}, all that remains is to show  that $\sigma_X $ is injective on objects, for any $V$-category $X$. Let $\sigma(x,r)=\sigma(y,s)$, or, equivalently, $X(-,x)\otimes r =X(-,y)\otimes s$. Then, in particular, \[r = X(x,x)\otimes r = X(x,y) \otimes s \leq s= X(y,y)\otimes s = X(y,x)\otimes r \leq r.\]
 Therefore $r=s$ and $X(y,x)=X(x,y)=k$. We conclude that $(x,r)=(y,s)$.
\end{proof}

Thanks to Theorem \ref{th:ch} $\BB$-algebras are characterized via an injectivity property with respect to special embeddings. We end this section studying in more detail these embeddings.
Since we are working in $\VCats$, a $B$-embedding $h\colon X\to Y$, being fully faithful, is injective on objects. Therefore, for simplicity, we may think of it as an inclusion. With $Bh_\sharp\colon BY\to BX$ the right adjoint and left inverse of $Bh\colon BX\to BY$, we denote $Bh_\sharp(y,r)$ by $(y_r, r_y)$.

\begin{lemma}\label{prop:h}
Let $h\colon X\to Y$ be a $B$-embedding. Then:
\begin{enumerate}
\item $(\forall y\in Y)\;(\forall x\in X)\;(\forall r\in V)\; BY((x,r),(y,r))=BY((x,r),(y_r,r_y))$;
\item $(\forall \, y \in Y) \colon  k_y=Y(y_k,y)$;
\item $(\forall\, y \in Y)\;(\forall x \in X)\colon \enskip Y(x,y)= Y(x,y_k)\otimes Y(y_k,y)$.
\end{enumerate}
\end{lemma}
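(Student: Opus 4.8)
The plan is to work with a $B$-embedding $h\colon X\to Y$, viewed as an inclusion, and to exploit the two defining properties of $Bh_\sharp\colon BY\to BX$: that $Bh\dashv Bh_\sharp$ in $\VCat$, and that $Bh_\sharp\cdot Bh=1_{BX}$. Writing $Bh_\sharp(y,r)=(y_r,r_y)$ and using that $\BB$ is lax idempotent (so the adjunction $Bh\dashv Bh_\sharp$ may be expressed via distributors, or simply via the order on hom-objects as in Remark \ref{rem:adjcond}), the adjunction translates into
\[ BY((x,r),(y,s)) = BX((x,r),(y_s,s_y)) \]
for all $(x,r)\in BX$ and $(y,s)\in BY$, since $Bh$ sends $(x,r)$ to $(x,r)$. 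Part (1) is then the special case $s=r$ together with the observation that $BX((x,r),(y_r,r_y))=BY((x,r),(y_r,r_y))$ because $h$ is fully faithful (and $y_r\in X$, so $x,y_r$ both lie in $X$). I expect this to be the quickest of the three.

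\textbf{Part (2).} For $y\in Y$, apply the adjunction identity with $s=k$: the left inverse property gives $Bh_\sharp(Bh(y_k,r))=(y_k,r)$ for any $r$, but more to the point I would compute $k_y$ directly. From $(y_k,k_y)=Bh_\sharp(y,k)$ and the adjunction, $BY((y_k,k_y),(y,k))=BX((y_k,k_y),(y_k,k_y))\ge k$, and the counit $Bh\cdot Bh_\sharp\le 1_{BY}$ gives $BY((y_k,k_y),(y,k))\ge k$, i.e. $k_y\le Y(y_k,y)\otimes k=Y(y_k,y)$. For the reverse inequality I would use that $(y_k,-)$ together with $Y(y_k,-)$ form an adjunction when relevant, or more directly unwind $BY((x,k),(y,k))=BX((x,k),(y_k,k_y))$ for $x=y_k$: this reads $Y(y_k,y)=\hom(k,Y(y_k,y_k)\otimes k_y)=k_y$ once one checks $Y(y_k,y_k)=k$ (true since $h$ fully faithful and $k\le X(y_k,y_k)$), giving equality. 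So the identity $BY((x,k),(y,k))=BX((x,k),(y_k,k_y))$ instantiated at $x=y_k$ is the engine.

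\textbf{Part (3).} This should follow by taking $s=k$ in the general identity and expanding both sides. The left side is $BY((x,k),(y,k))=\hom(k,Y(x,y)\otimes k)=Y(x,y)$ (using integrality/unit). The right side is $BX((x,k),(y_k,k_y))=\hom(k,X(x,y_k)\otimes k_y)=X(x,y_k)\otimes k_y=Y(x,y_k)\otimes k_y$ (full faithfulness of $h$ turning $X(x,y_k)$ into $Y(x,y_k)$). Substituting $k_y=Y(y_k,y)$ from part (2) yields $Y(x,y)=Y(x,y_k)\otimes Y(y_k,y)$, which is exactly (3).

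\textbf{Main obstacle.} The delicate point is extracting the clean adjunction identity $BY((x,r),(y,s))=BX((x,r),(y_s,s_y))$ from the definition of $B$-embedding in a way compatible with the separated setting and with the lax-idempotent machinery; one must be careful that $Bh\dashv Bh_\sharp$ is an honest adjunction in $\VCat$ (not just up to $\simeq$), which is fine here because we are in $\VCat_\sep$. A secondary subtlety is the reverse inequality in part (2): one has to be sure that plugging $x=y_k$ into the identity is legitimate, i.e. that $y_k\in X$, which holds by construction. Everything else is routine manipulation of $\hom$, the unit $k$, and full faithfulness.
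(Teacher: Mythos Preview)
Your proposal is correct and follows essentially the same route as the paper: both arguments rest on the adjunction identity $BY(Bh(x,r),(y,s))=BX((x,r),Bh_\sharp(y,s))$ (which is just Remark~\ref{rem:adjcond}) combined with full faithfulness of $h$, and then instantiate at $s=r$ for (1), at $x=y_k$, $r=s=k$ for (2), and at general $x\in X$, $r=s=k$ for (3). The only cosmetic difference is that the paper first proves (1) as the displayed equality and then derives (2) and (3) by quoting (1), whereas you invoke the general adjunction identity directly each time; your ``main obstacle'' is not really an obstacle, since that identity is exactly the content of Remark~\ref{rem:adjcond} and needs no separatedness (and $Y(y_k,y_k)=k$ holds because $V$ is integral).
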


\begin{proof}
(1) From $Bh_\sharp\cdot Bh=1_{BX}$ and $Bh\cdot Bh_\sharp\leq 1_{BY}$ one gets, for any $(y,r)\in BY$, $(y,r)\leq (y_r,r_y)$, i.e. $BY((y,r),(y_r,r_y))=\hom(r_y,Y(y_r,y)\otimes r)=k$. Therefore,  for all $x\in X$, $y\in Y$, $r\in V$,
\begin{align*}
BY((x,r),(y,r))&\leq BX((x,r),(y_r,r_y))=BY((x,r),(y_r,r_y))\\
&=BY((x,r),(y_r,r_y))\otimes BY((y_r,r_y),(y,r))\leq BY((x,r),(y,r)),
\end{align*}
that is
\[BY((x,r),(y,r))=BY((x,r),(y_r,r_y)).\]

(2) Let $y \in Y$. Then
\[Y(y_k,y)=BY((y_k,k),(y,k))=BY((y_k,k),(y_k,k_y))=k_y.\]

(3) Let $y\in Y$ and $x\in X$. Then
\[Y(x,y)=BY((x,k),(y,k))=BY((x,k),(y_k,k_y))=Y(x,y_k)\otimes k_y=Y(x,y_k)\otimes Y(y_k,y).\]

\end{proof}

\begin{prop}
Let $X$ and $Y$ be $V$-categories. A $V$-functor $h\colon X\to Y$ is a $B$-embedding if and only if $h$ is fully faithful and
\begin{equation}\label{eq:fff}
(\forall y \in Y)\;(\exists ! z\in X)\; (\forall x\in X)\;\;\; Y(x,y)=Y(x,z)\otimes Y(z,y).
\end{equation}
\end{prop}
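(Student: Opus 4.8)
The plan is to lean on Proposition~\ref{prop:emb}, which already characterises $B$-embeddings as the $V$-functors $h$ that are fully faithful and have $h_*\in\Phi(\BB)$. Full faithfulness is asserted on both sides of the proposition, so throughout I assume $h$ fully faithful; as we are in $\VCats$, $h$ is then injective and may be treated as an inclusion $X\subseteq Y$, exactly as in Lemma~\ref{prop:h}. The task thus reduces to showing that, under this assumption, $h_*\in\Phi(\BB)$ if and only if~\eqref{eq:fff}.

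First I would unpack $h_*\in\Phi(\BB)$ via the defining class~\eqref{eq:fai}: it says $y^*\cdot h_*\in BX$ for every $y\in Y$, and a direct computation identifies $y^*\cdot h_*\colon X\dist E$ with the presheaf $x\mapsto Y(h(x),y)=Y(x,y)$. On the other hand, by Proposition~\ref{prop:Bbmonadmorphismff} the $V$-category $BX$ sits inside $PX$, through the injective fully faithful map $\sigma_X$, as the set of presheaves $\sigma_X(z,r)\colon x\mapsto X(x,z)\otimes r=Y(x,z)\otimes r$ with $(z,r)\in BX$, i.e.\ with $r\neq\bot$. Hence $y^*\cdot h_*\in BX$ iff there are $z\in X$ and $r\neq\bot$ with $Y(x,y)=Y(x,z)\otimes r$ for all $x\in X$; evaluating at $x=z$ and using integrality of $V$ (so $Y(z,z)=k$) forces $r=Y(z,y)$, and we land on exactly the identity of~\eqref{eq:fff}. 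The uniqueness ``$\exists!z$'' is then automatic: $V$ cancellative makes $BX$ separated (Proposition~\ref{prop:canc}) and $\sigma_X$ injective, so a presheaf lying in $BX$ has a unique $\sigma_X$-preimage $(z,r)$, hence a unique such $z$. A hands-on alternative: if $z,z'$ both satisfy the identity (with attached radii $Y(z,y),Y(z',y)\neq\bot$), evaluate it at $x=z$ and at $x=z'$ and combine to get $Y(z,y)=Y(z,z')\otimes Y(z',z)\otimes Y(z,y)$; cancellativity gives $Y(z,z')\otimes Y(z',z)=k$, integrality forces $Y(z,z')=Y(z',z)=k$, full faithfulness carries this to $X$, and separatedness of $X$ yields $z=z'$.

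For the ``if'' direction it is convenient to package this into an explicit right adjoint. Given $(y,r)\in BY$, let $z_y\in X$ be the element furnished by~\eqref{eq:fff} and set $Bh_\sharp(y,r)=(z_y,\,Y(z_y,y)\otimes r)$. The adjunction identity is a one-liner: $BY((x,t),(y,r))=\hom(t,Y(x,y)\otimes r)=\hom(t,Y(x,z_y)\otimes Y(z_y,y)\otimes r)$ by~\eqref{eq:fff}, while $BX((x,t),Bh_\sharp(y,r))=\hom(t,X(x,z_y)\otimes Y(z_y,y)\otimes r)$, and these agree by full faithfulness; by Remark~\ref{rem:adjcond} this simultaneously makes $Bh_\sharp$ a $V$-functor and gives $Bh\dashv Bh_\sharp$. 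That $Bh_\sharp\cdot Bh=1_{BX}$ follows because, for $x\in X$, uniqueness in~\eqref{eq:fff} forces $z_x=x$ (the choice $z=x$ visibly works), so $Bh_\sharp(x,t)=(x,Y(x,x)\otimes t)=(x,t)$. The ``only if'' direction is then immediate: Lemma~\ref{prop:h}(3) (with $z=y_k$) gives existence, Lemma~\ref{prop:h}(2) certifies $Y(y_k,y)=k_y\neq\bot$, and uniqueness is the argument above.

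The step I expect to be the real obstacle is the bookkeeping of non-$\bot$-ness of radii: one must ensure that the formal ball $(z_y,Y(z_y,y)\otimes r)$ built in the ``if'' direction genuinely lies in $BX$ and not merely in $\Bb X$, equivalently that the $z$ of~\eqref{eq:fff} is the one with $Y(z,y)\neq\bot$. This is exactly where the restriction to the formal ball monad $\BB$ (rather than $\BBb$), together with the cancellativity and integrality of $V$, enter, and it is the only part of the argument that is not a purely formal translation between $B$-embeddings and the defining class $\Phi(\BB)$.
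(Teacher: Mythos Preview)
Your argument is essentially the paper's own: for the ``if'' direction you build $Bh_\sharp(y,r)=(z_y,Y(z_y,y)\otimes r)$ and verify the adjunction identity and the splitting exactly as the paper does; for ``only if'' you invoke Lemma~\ref{prop:h} just as the paper does, and your hands-on uniqueness computation is the paper's as well. Your additional route through Proposition~\ref{prop:emb} and the defining class~\eqref{eq:fai} is not in the paper, but it is a clean alternative that makes the link with~\eqref{eq:fff} more transparent.

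The concern you flag at the end is real, and the paper's proof does not address it either. For the ``if'' direction one needs $Y(z_y,y)\otimes r\neq\bot$ so that $Bh_\sharp$ lands in $BX$. When $|X|\ge 2$ the uniqueness clause in~\eqref{eq:fff} does most of the work: if $Y(z_y,y)=\bot$ then $Y(x,y)=Y(x,z_y)\otimes\bot=\bot$ for every $x\in X$, whence \emph{every} $z'\in X$ satisfies the identity of~\eqref{eq:fff} (since then $Y(z',y)=\bot$ too), contradicting uniqueness. But for $|X|=1$ the uniqueness clause is vacuous and the equivalence can genuinely fail: in $\Met$ take $X=\{z\}\subseteq Y=\{z,y\}$ with $Y(z,y)=\infty$ and $Y(y,z)=0$; then $h$ is fully faithful and~\eqref{eq:fff} holds trivially, yet any right adjoint to $Bh$ would have to send $(y,0)$ to $(z,\infty)\notin BX$. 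Even for $|X|\ge 2$, a cancellative integral quantale with zero divisors (e.g.\ $[0,1]_\odot$) can give $Y(z_y,y)\otimes r=\bot$ from non-$\bot$ factors. So your instinct that this is the real obstacle is well founded; cancellativity and integrality alone do not dispose of it, and some extra hypothesis (or a reformulation of~\eqref{eq:fff} incorporating $Y(z,y)\neq\bot$) is needed to make the ``if'' direction airtight.
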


\begin{proof}
If $h$ is a $B$-embedding, then it is fully faithful by Proposition \ref{prop:emb} and, for each $y\in Y$, $z=y_k\in X$ fulfils the required condition.
To show that such $z$ is unique, assume that $z,z'\in X$ verify the equality of condition \eqref{eq:fff}. Then
\[Y(z,y)=Y(z,z')\otimes Y(z',y)\leq Y(z',y)=Y(z',z)\otimes Y(z,y)\leq Y(z,y),\]
and therefore, because $V$ is cancellative, $Y(z',z)=k$; analogously one proves that $Y(z,z')=k$, and so $z=z'$ because $Y$ is separated.\\

To prove the converse, for each $y\in Y$ we denote by $\yk$ the only $z\in X$ satisfying \eqref{eq:fff}, and define \[Bh_\sharp(y,r)=(\yk,Y(\yk,y)\otimes r).\] When $x\in X$, it is immediate that $\overline{x}=x$, and so $Bh_\sharp\cdot Bh=1_{BX}$. Using Remark \ref{rem:adjcond}, to prove that $Bh_\sharp$ is a $V$-functor and $Bh\dashv Bh_\sharp$ it is enough to show that
\[BX((x,r),Bh_\sharp(y,s))=BY(Bh(x,r),(y,s)),\]
for every $x\in X$, $y\in Y$, $r,s\in V$. By definition of $Bh_\sharp$ this means
\[BX((x,r),(\yk,Y(\yk,y)\otimes s))=BY((x,r),(y,s)),\]
that is,
\[\hom(r,Y(x,\yk)\otimes Y(\yk,y)\otimes s)=\hom(r,Y(x,y)\otimes s),\]
which follows directly from \eqref{eq:fff}.
\end{proof}

\begin{corollary}
In $\Met$, if $X\subseteq [0,\infty]$, then its inclusion $h\colon X\to[0,\infty]$ is a $B$-embedding if, and only if, $X$ is a closed interval.
\end{corollary}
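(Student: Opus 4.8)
The plan is to read the statement through the characterisation of $B$-embeddings established just above: a $V$-functor $h\colon X\to Y$ is a $B$-embedding precisely when it is fully faithful and satisfies \eqref{eq:fff}, i.e. for every $y\in Y$ there is a unique $z\in X$ with $Y(x,y)=Y(x,z)\otimes Y(z,y)$ for all $x\in X$. Here $Y=([0,\infty],\hom)$ and $X\subseteq[0,\infty]$ carries the induced structure, so $h$ is automatically fully faithful and the statement reduces to \eqref{eq:fff}. Since in $\Met$ one has $\otimes=+$ and $\hom(u,v)=v\ominus u=\max\{v-u,0\}$, the condition reads: for each $y\in[0,\infty]$ there is a unique $z\in X$ with
\[y\ominus x=(z\ominus x)+(y\ominus z)\quad\text{for all }x\in X,\]
that is, $z$ is ``metrically between'' $y$ and every point of $X$ simultaneously.

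For the forward implication I would fix $y$ and run a short case analysis on the position of $z$ relative to $y$ and to the points of $X$ (using $u\ominus v=0$ whenever $v\ge u$, and $\infty\ominus u=\infty$ for $u<\infty$). The outcome is that $z=y$ is admissible exactly when $y\in X$, while any admissible $z\neq y$ is forced — by testing the identity at points of $X$ lying beyond $z$ — to be either $\min X$ (with $y\le z$) or $\max X$ (with $z\le y$). The value $y=\infty$ is the crux: if $\infty\notin X$, then for $y=\infty$ the identity becomes $\infty=(z\ominus x)+\infty$ and holds for every $z\in X$, so uniqueness fails unless $X=\{c\}$ is a single point; but a one-point $X=\{c\}$ with $c<\infty$ cannot be a $B$-embedding, for then the adjoint $Bh_\sharp$ of $Bh$ would have to satisfy $BX((x,r),Bh_\sharp(\infty,0))=BY((x,r),(\infty,0))=\infty$ for all $(x,r)\in BX$, which is impossible since every ball of $BX$ has finite radius and finite centre-distance. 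Hence $\infty\in X$; and with $\infty$ now available as a test point, evaluating the identity at $x=\infty$ shows that for a finite $y\notin X$ the admissible $z$ must satisfy $z>y$, hence $z=\min X$, so that $\min X$ exists and $\min X>y$.

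Letting $y$ run through $[0,\infty)\setminus X$ (and using $y=0$ to see that $\min X$ exists in any case), the last sentence gives $[\min X,\infty)\subseteq X$, and together with $\infty\in X$ we conclude $X=[a,\infty]$ with $a=\min X$ — which is exactly what it means for $X$ to be a closed (sub)interval of $[0,\infty]$, the closed subintervals of $[0,\infty]$ for the metric $v\ominus u$ being precisely the sets $[a,\infty]$. For the converse, given $X=[a,\infty]$ I would verify \eqref{eq:fff} by hand: take $z=y$ when $y\in X$, $z=a$ when $y<a$, and $z=\infty$ when $y=\infty$; the required identities are immediate (all the $\ominus$-differences that should vanish do), uniqueness is equally direct, and since $\infty\in X$ the $V$-functor $Bh_\sharp$ produced in the proof of the characterisation is well defined, so $h$ is a $B$-embedding. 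The main obstacle is the case analysis of the middle paragraph — keeping straight which boundary point of $X$ can witness a given $y$, and, above all, recognising that the point $\infty$ forces $X$ to be unbounded, thereby excluding bounded closed intervals such as $[0,1]$.
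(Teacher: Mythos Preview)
Your analysis is correct, and you have in fact uncovered an error in the paper: the Corollary as stated is false. For $X=[0,1]$ and $y=\infty$, every $z\in[0,1]$ satisfies $\infty\ominus x=(z\ominus x)+(\infty\ominus z)$ for all $x\in[0,1]$, so uniqueness in \eqref{eq:fff} fails; and directly, any candidate $Bh_\sharp(\infty,0)=(z,t)\in B[0,1]$ would have to satisfy $BX((x,r),(z,t))=BY((x,r),(\infty,0))=\infty$, which is impossible since $z\ominus x\le 1$ and $t<\infty$. The paper's verification that an interval $[x_0,x_1]$ satisfies \eqref{eq:fff} checks existence only and overlooks this failure of uniqueness at $y=\infty$ (the uniqueness argument in the preceding Proposition invokes cancellativity, which requires $Y(z,y)\neq\bot$, and that is exactly what breaks when $x_1<\infty$). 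Your corrected conclusion --- that the $B$-embeddings $X\hookrightarrow[0,\infty]$ are precisely the subsets $X=[a,\infty]$ --- is the right one.

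Two small points on your write-up. The step ``$z>y$, hence $z=\min X$'' deserves one more line: once $y\ominus z=0$, the identity reduces to $y\ominus x=z\ominus x$ for all $x\in X$, and any $x\in X$ with $x<z$ (whether $x\le y$ or $y<x<z$) makes the two sides differ, so no such $x$ exists and $z$ is the minimum. And your reading of ``closed interval'' as ``closed for the topology induced by $\ominus$'' (whose nonempty closed sets are indeed the $[a,\infty]$) is a clever way to salvage the wording, but the paper's own proof explicitly treats $[x_0,x_1]$ with arbitrary $x_1$, so this is a genuine erratum rather than a matter of interpretation.
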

\begin{proof}
If $X=[x_0,x_1]$, with $x_0,x_1\in\,[0,\infty]$, $x_0\leq x_1$, then it is easy to check that, defining $\yk=x_0$ if $y\leq x_0$, $\yk=y$ if $y\in X$, and $\yk=x_1$ if $y\geq x_1$, for every $y\in\,[0,\infty]$, condition \eqref{eq:fff} is fulfilled.\\

We divide the proof of the converse in two cases:

(1) If $X$ is not an interval, i.e. if there exists $x,x'\in X$, $y\in [0,\infty]\setminus X$  with $x<y<x'$, then either $\yk<y$, and then
\[0=y\ominus x'\neq (y\ominus x')+(y\ominus\yk)=y-\yk,\]
or $\yk>y$, and then
\[y-x=y\ominus x\neq (\yk\ominus x)+(y\ominus\yk)=\yk-x.\]\\

(2) If $X=[x_0,x_1[$ and $y> x_1$, then there exists $x\in X$ with $\yk<x<y$, and so
\[y-x=y\ominus x\neq (\yk\ominus x)+(y\ominus\yk)=y-\yk.\]
An analogous argument works for $X=]x_0,x_1]$.
\end{proof}

\section{On algebras for submonads of $\PP$ and their morphisms}

In the following $\TT=(T,\mu,\eta)$ is a submonad of the presheaf monad $ \PP=(P,\mult,\yoneda)$ in $\VCats$ For simplicity we will assume that the injective and fully faithful components of the monad morphism $\sigma:T \rightarrow P$ are inclusions. Theorem \ref{th:KZ} gives immediately that:

\begin{prop}
Let  $(X,a)$ be a $V$-category and $\alpha: TX \rightarrow X$   be a  $V$-functor.  The following are equivalent: \begin{enumerate}
 \item $(X,\alpha)$ is a $\TT$-algebra;
  \item $\forall\, x \in X:$ $ \alpha  (x^*)=x $.
 \end{enumerate}
\end{prop}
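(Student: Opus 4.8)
The plan is to invoke Theorem~\ref{th:KZ}, which applies since $\TT$ is lax idempotent (being a submonad of $\PP$, by Corollary~\ref{cor:laxidpt} or Theorem~\ref{th:Phi}). By that theorem, $\alpha\colon TX\to X$ is a $\TT$-algebra structure if and only if $\alpha\cdot\eta_X=1_X$. So the whole content of the proposition is to identify, for the specific $\sigma$-inclusion setup, the value $\eta_X(x)\in TX$ with the distributor $x^*\colon X\dist E$, and then to read off that $\alpha\cdot\eta_X=1_X$ is exactly the condition $\alpha(x^*)=x$ for all $x\in X$.

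First I would recall that, under the identification $\sigma_X\colon TX\hookrightarrow PX$ taken to be an inclusion, the unit component $\eta_X\colon X\to TX$ satisfies $\sigma_X\cdot\eta_X=\yoneda_X$ (this is the left-hand triangle of the monad morphism diagram~\eqref{eq:monadmorphism}, specialised to $\sigma\cdot\eta=\yoneda$, and it was already used in the proof of Theorem~\ref{th:submonad}). Since $\yoneda_X(x)=x^*$ by the description of the Yoneda functor in Section~1, and $\sigma_X$ is an inclusion, we get $\eta_X(x)=x^*\in TX$ for every $x\in X$. Hence $TX$ is precisely the $V$-subcategory of $PX$ consisting of those distributors $X\dist E$ lying in $\Phi(\TT)$, and the unit sends $x$ to $x^*$.

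Then the equivalence is immediate: $(X,\alpha)$ is a $\TT$-algebra $\iff$ $\alpha\cdot\eta_X=1_X$ $\iff$ $\alpha(\eta_X(x))=x$ for all $x\in X$ $\iff$ $\alpha(x^*)=x$ for all $x\in X$. The implication (1)~$\Rightarrow$~(2) uses the ``only if'' direction of Theorem~\ref{th:KZ}(1)(iii), and (2)~$\Rightarrow$~(1) uses the ``if'' direction: once $\alpha$ is a $V$-functor with $\alpha\cdot\eta_X=1_X$, lax idempotency of $\TT$ upgrades this to a genuine algebra structure (automatically $\alpha\dashv\eta_X$).

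There is essentially no obstacle here; the only thing requiring a word is the justification that $\eta_X(x)=x^*$, i.e.\ that the inclusion $\sigma_X$ carries the abstract unit to the Yoneda embedding. This is forced by $\sigma\cdot\eta=\yoneda$ together with $\sigma_X$ being injective on objects, so it is a one-line remark rather than a computation. Everything else is a direct citation of Theorem~\ref{th:KZ}.
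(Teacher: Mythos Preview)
Your proposal is correct and is exactly the paper's approach: the paper simply states that ``Theorem~\ref{th:KZ} gives immediately'' this proposition, and you have spelled out precisely why---namely, that under the inclusion $\sigma_X\colon TX\hookrightarrow PX$ the unit $\eta_X$ is the Yoneda embedding $x\mapsto x^*$, so the condition $\alpha\cdot\eta_X=1_X$ from Theorem~\ref{th:KZ}(1)(iii) reads $\alpha(x^*)=x$ for all $x$. There is nothing to add.
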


We would like to identify the $\TT$-algebras directly, as we did for $\BBb$ or $\BB$ in Proposition \ref{prop:Balg}. First of all, we point out that a $\TT$-algebra structure $\alpha\colon TX\to X$ must satisfy, for every $\varphi\in TX$ and $x\in X$,
\[X(\alpha(\varphi), x)=TX(\varphi,x^*),\]
and so, in particular,
\[\alpha(\varphi)\leq x \;\Leftrightarrow\;\varphi\leq x^*;\]
hence $\alpha$ must assign to each $\varphi\in TX$ an $x_\varphi\in X$ so that
\[x_\varphi=\min\{x\in X\,;\,\varphi\leq x^*\}.\] Moreover, for such map $\alpha\colon TX\to X$, $\alpha$ is a $V$-functor if, and only if,
\begin{align*}
&\;(\forall \varphi,\rho\in TX)\;\;TX(\varphi,\rho)\leq X(x_\varphi,x_\rho)=TX(X(-,x_\varphi),X(-,x_\rho))\\
\Leftrightarrow&\;(\forall \varphi,\rho\in TX)\;\;TX(\varphi,\rho)\leq \bigwedge_{x\in X}\hom(X(x,x_\varphi),X(x,x_\rho))\\
\Leftrightarrow&\;(\forall x\in X)\;(\forall \varphi,\rho\in TX)\;\;X(x,x_\varphi)\otimes TX(\varphi,\rho)\leq X(x,x_\rho).
\end{align*}

\begin{prop}
A $V$-category $X$ is a $\TT$-algebra if, and only if:
\begin{enumerate}
\item for all $\varphi\in TX$ there exists $\min\{x\in X\,;\,\varphi\leq x^*\}$;
\item for all $\varphi, \rho\in TX$ and for all $x\in X$, $X(x,x_\varphi)\otimes TX(\varphi,\rho)\leq X(x,x_\rho)$.
\end{enumerate}
\end{prop}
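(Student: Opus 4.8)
The plan is to prove that conditions (1) and (2) together are equivalent to the characterisation of $\TT$-algebras obtained in the discussion immediately preceding the statement, where it is shown that a $\TT$-algebra structure $\alpha\colon TX\to X$ necessarily assigns to each $\varphi\in TX$ the element $x_\varphi=\min\{x\in X\,;\,\varphi\leq x^*\}$, and that such a set map $\alpha$ is a $V$-functor precisely when inequality (2) holds. So really all the work is already done in the paragraph above the statement; what remains is to package it and to check that the resulting $V$-functor $\alpha$ actually \emph{is} an Eilenberg--Moore structure rather than merely a $V$-functor satisfying $\alpha(x^*)=x$.

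First I would argue the forward implication: if $X$ is a $\TT$-algebra with structure $\alpha$, then by Theorem~\ref{thm:charact}(1) (or directly from Theorem~\ref{th:KZ}, since $\TT$ is lax idempotent) one has $X(\alpha(\varphi),x)=TX(\varphi,x^*)$ for all $\varphi\in TX$, $x\in X$. Reading off $\alpha(\varphi)\leq x \Leftrightarrow k\leq X(\alpha(\varphi),x)\Leftrightarrow k\leq TX(\varphi,x^*)\Leftrightarrow\varphi\leq x^*$ shows $\alpha(\varphi)=x_\varphi$ realises the minimum in (1); and then $V$-functoriality of $\alpha$, rewritten via the formula for the $V$-category structure on $TX\subseteq PX$ exactly as in the displayed chain of equivalences before the statement, gives (2).

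For the converse, suppose (1) and (2) hold. Define $\alpha\colon TX\to X$ by $\alpha(\varphi)=x_\varphi$. Condition (2) is, by the same chain of equivalences run backwards, precisely the statement that $\alpha$ is a $V$-functor. Since $\varphi\leq x^*$ with $\varphi=x^*$ forces $x_{x^*}\leq x$, and conversely $x^*\leq x_{x^*}{}^*$ gives $x\leq x_{x^*}$ (using separatedness of $X$, which is in force since $\TT$ is a submonad of $\PP$ in $\VCats$), we get $\alpha(x^*)=x$, i.e. $\alpha\cdot\eta_X=1_X$. By Theorem~\ref{th:KZ}, since $\TT$ is lax idempotent, a $V$-functor $\alpha\colon TX\to X$ with $\alpha\cdot\eta_X=1_X$ is automatically a $\TT$-algebra structure. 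This closes the loop.

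The only genuinely delicate point — and the one I would be most careful about — is the translation of ``$\alpha$ is a $V$-functor'' into condition (2): one must use that $x_\varphi$ is characterised by $X(x_\varphi,x)=TX(\varphi,x^*)$ for \emph{all} $x$ (not just $\leq$), which in turn needs separatedness to pin down $x_\varphi$ uniquely and to identify $X(-,x_\varphi)$ with the presheaf one expects; then $X(x_\varphi,x_\rho)=TX(X(-,x_\varphi),X(-,x_\rho))$ by full faithfulness of the Yoneda embedding, and expanding the right-hand side as a meet of $\hom$'s and applying the $\otimes$--$\hom$ adjunction yields (2) after quantifier juggling. Everything else is routine, so I would keep the write-up short, citing Theorem~\ref{th:KZ} and Theorem~\ref{thm:charact} and referring back to the computation already displayed before the statement rather than repeating it.
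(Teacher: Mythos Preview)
Your proposal is correct and mirrors the paper's approach exactly: the paper's proof \emph{is} the displayed discussion preceding the proposition, combined with the proposition stated just before it (a $V$-functor $\alpha\colon TX\to X$ with $\alpha(x^*)=x$ for all $x$ is already a $\TT$-algebra structure), and you have simply made explicit the step $x_{x^*}=x$ via separatedness. Your only unnecessary caution is in the final paragraph---the equivalence between $V$-functoriality of $\alpha$ and condition~(2) uses nothing beyond full faithfulness of $\yoneda_X$ and the $\otimes$--$\hom$ adjunction, and does \emph{not} require the stronger identity $X(x_\varphi,x)=TX(\varphi,x^*)$.
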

We remark that condition (2) can be equivalently stated as:
\begin{enumerate}
\item[\emph{(2')}] for each $\rho\in TX$, the distributor $\rho_1=\displaystyle\bigvee_{\varphi\in TX} X(-,x_\varphi)\otimes TX(\varphi,\rho)$ satisfies $x_{\rho_1}=x_\rho$,
\end{enumerate}
which is the condition corresponding to condition (2) of Corollary \ref{cor:condition}.\\

Finally, as for the formal ball monad, Theorem \ref{th:KZ} gives the following characterisation of $\TT$-algebra morphisms.

\begin{corollary}
Let $(X,\alpha), (Y,\beta)$ be $\TT$-algebras. Then a $V$-functor $f: X \rightarrow Y$ is a $\TT$-algebra morphism if and only if   \[(\forall \varphi \in TX)\;\;\beta(\varphi \cdot f^*) \geq f(\alpha(\varphi)).\]
\end{corollary}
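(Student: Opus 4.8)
The plan is to get this essentially for free from Theorem~\ref{th:KZ}(2), which already supplies one of the two inequalities. First I would unwind the definitions: writing $\sigma\colon\TT\to\PP$ for the (inclusion) monad morphism, the functor $T$ acts on a $V$-functor $f\colon X\to Y$ as the (co)restriction of $Pf$, so that $Tf(\varphi)=\varphi\cdot f^*$ for every $\varphi\in TX$. Hence $f$ is a $\TT$-algebra morphism precisely when $f(\alpha(\varphi))=\beta(\varphi\cdot f^*)$ for all $\varphi\in TX$, i.e. when $f\cdot\alpha=\beta\cdot Tf$ pointwise.

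Next I would apply Theorem~\ref{th:KZ}(2): since $(X,\alpha)$ and $(Y,\beta)$ are $\TT$-algebras and $f$ is a $V$-functor, one automatically has $\beta\cdot Tf\leq f\cdot\alpha$, that is $\beta(\varphi\cdot f^*)\leq f(\alpha(\varphi))$ for every $\varphi\in TX$. So the displayed condition $\beta(\varphi\cdot f^*)\geq f(\alpha(\varphi))$ is exactly the reverse inequality, and the two together give $\beta(\varphi\cdot f^*)\simeq f(\alpha(\varphi))$ for all $\varphi\in TX$.

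To finish, for the forward implication I note that a $\TT$-algebra morphism satisfies $f(\alpha(\varphi))=\beta(\varphi\cdot f^*)$, hence in particular $\beta(\varphi\cdot f^*)\geq f(\alpha(\varphi))$; for the converse, the equivalence $\beta(\varphi\cdot f^*)\simeq f(\alpha(\varphi))$ obtained above upgrades to genuine equality because we are working in $\VCats$, so $Y$ is separated, and therefore $\beta\cdot Tf=f\cdot\alpha$, i.e. $f$ is a $\TT$-algebra morphism. The only step requiring a word of care is this last passage from $\simeq$ to $=$, which is precisely where separatedness of $Y$ is used; there is no genuine obstacle otherwise, the statement being a direct consequence of Theorem~\ref{th:KZ} together with the description of $Tf$ as a restriction of $Pf$.
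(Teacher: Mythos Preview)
Your argument is correct and is exactly the route the paper intends: it states the corollary as an immediate consequence of Theorem~\ref{th:KZ}(2) (in parallel with the formal ball case), using that $Tf(\varphi)=\varphi\cdot f^*$ and that one inequality is automatic, so only the displayed reverse inequality needs to be assumed. Your observation that separatedness of $Y$ is what upgrades $\simeq$ to $=$ is precisely why the ambient category in this section is $\VCats$.
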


\begin{example}
\textbf{The Lawvere monad.} Among the examples presented in \cite{CH08} there is a special submonad of $\PP$ which is inspired by the crucial remark of Lawvere in \cite{Law73} that Cauchy completeness for metric spaces is a kind of cocompleteness for $V$-categories. Indeed, the submonad $\LL$ of $\PP$ induced by
\[\Phi=\{\varphi\colon X\dist Y\,;\,\varphi\mbox{ is a right adjoint $V$-distributor}\}\]
has as $\LL$-algebras the \emph{Lawvere complete $V$-categories}. These were studied also in \cite{CH09}, and in \cite{HT10} under the name $L$-complete $V$-categories. When $V=[0,\infty]_+$, using the usual order in $[0,\infty]$, for distributors $\varphi\colon X\dist E$, $\psi\colon E\dist X$ to be adjoint
\[\xymatrix@=8ex{X\ar@{}[r]|{\top}\ar@<1mm>@/^2mm/[r]|\circ^{{\varphi}} & \ar@<1mm>@/^2mm/[l]|\circ^{{\psi}}E}\]means that
\begin{align*}
(\forall x,x'\in X)\;\;&X(x,x')\leq \varphi(x)+\psi(x'),\\
&0\geq \inf_{x\in X} (\psi(x)+\varphi(x)).
\end{align*}
This means in particular that
\[(\forall n\in\NN)\;(\exists x_n\in X)\;\;\psi(x_n)+\varphi(x_n)\leq\frac{1}{n},\]
and, moreover,
\[X(x_n,x_m)\leq\varphi(x_n)+\psi(x_m)\leq \frac{1}{n}+\frac{1}{m}.\]
This defines a \emph{Cauchy sequence} $(x_n)_n$, so that
\[(\forall\varepsilon>0)\;(\exists p\in\NN)\;(\forall n,m\in\NN)\;n\geq p\;\wedge\;m\geq p\;\Rightarrow\;\;X(x_n,x_m)+X(x_m,x_n)<\varepsilon.\]
Hence, any such pair induces a (equivalence class of) Cauchy sequence(s) $(x_n)_n$, and a representative for
\[\xymatrix{X\ar[r]|-{\circ}^{(1_X)_*}\ar[d]|-{\circ}_{\varphi}&X\\
E\ar@{}[ru]^{\leq}\ar[ru]|-{\circ}_{[\varphi,(1_X)_*]}}\] is nothing but a limit point for $(x_n)_n$. Conversely, it is easily checked that every Cauchy sequence $(x_n)_n$ in $X$
gives rise to a pair of adjoint distributors
\[\varphi=\lim_n\,X(-,x_n)\mbox{ and }\psi=\lim_n\,X(x_n,-).\]
We point out that the $\LL$-embeddings, i.e. the fully faithful and fully dense $V$-functors $f\colon X\to Y$ do not coincide with the $\LL$-dense ones (so that $f_*$ is a right adjoint). For instance, assuming for simplicity that $V$ is integral, a $V$-functor $y\colon E\to X$ ($y\in X$) is fully dense if and only if $y\simeq x$ for all $x\in X$, while it is an $\LL$-embedding if and only if $y\leq x$ for all $x\in X$. Indeed, $y\colon E\to X$ is $\LL$-dense if, and only if,
\begin{enumerate}
\item[--] there is a distributor $\varphi\colon X\dist E$, i.e.
\begin{equation}\label{eq:distr}
(\forall x,x'\in X)\;\;X(x,x')\otimes\varphi(x')\leq\varphi(x),
\end{equation}
such that
\item[--] $k\geq \varphi\cdot y_*$ , which is trivially true, and $a\leq y_*\cdot\varphi$, i.e.
\begin{equation}\label{eq:adjoint}
(\forall x,x'\in X)\;\;X(x,x')\leq \varphi(x)\otimes X(y,x').
\end{equation}
\end{enumerate}
Since \eqref{eq:distr} follows from \eqref{eq:adjoint},
\[y\mbox{ is $\LL$-dense }\;\Leftrightarrow\;\;(\forall x,x'\in X)\;\;X(x,x')\leq \varphi(x)\otimes X(y,x').\]
In particular, when $x=x'$, this gives $k\leq \varphi(x)\otimes X(y,x)$, and so we can conclude that, for all $x\in X$, $y\leq x$ and $\varphi(x)=k$. The converse is also true; that is
\[y\mbox{ is $\LL$-dense }\;\Leftrightarrow\;\;(\forall x\in X)\;\;y\leq x.\]

Still, it was shown in \cite{HT10} that injectivity with respect to fully dense and fully faithful $V$-functors (called $L$-dense in \cite{HT10}) characterizes also the $\LL$-algebras.

\end{example}

\section*{Acknowledgements}

We are grateful to Dirk Hofmann for useful discussions concerning our last example.\\

This work was partially supported by the Centre for Mathematics of the University of Coimbra -- UIDB/00324/2020, funded by the Portuguese Government through FCT/MCTES, and the FCT PhD grant SFRH/BD/150460/2019.

\end{document}